\newcommand\str{\bgroup\markoverwith
{\textcolor{red}{\rule[0.5ex]{2pt}{1.5pt}}}\ULon} 
\newcommand{\al}{\alpha}
\newcommand{\be}{\beta}
\newcommand{\la}{\lambda}
\newcommand{\cla}{c_\lambda}
\newcommand{\dla}{d_\lambda}
\newcommand{\Dla}{\Delta_\lambda}
\newcommand{\id}{I_d}
\newcommand{\diag}{\mathbf{diag}}
\newcommand{\zeros}{0_d}
\newcommand{\norm}[1]{\left\|#1\right\|}
\newcommand{\bigO}{\mathcal{O}}
\newcommand{\R}{{\mathbb{R}}}
\DeclareMathOperator*{\argmin}{arg\,min}
\DeclareMathOperator{\Tr}{Tr}
\newcommand{\calS}{\mathcal{S}}
\newcommand{\ty}{\tilde{y}}
\newcommand{\tx}{\tilde{\xi}}
\newcommand{\tiR}{\tilde{R}}
\newcommand{\tla}{\tilde{\lambda}}
\newcommand{\baf}{\bar{f}}
\newcommand{\E}{\mathbb{E}}
\newcommand{\Sml}{$f\in S_{\mu,L}(\mathbb{R}^d)$}
\newcommand{\uab}{u_{\alpha,\beta}}
\newcommand{\buab}{\bar{u}_{\alpha,\beta}}
 \newtheorem{theorem}{Theorem}[section]  
 \newtheorem{ex}[theorem]{Example}
\newtheorem{proposition}[theorem]{Proposition}
 \newtheorem{lemma}[theorem]{Lemma}
 \newtheorem{rmk}[theorem]{Remark}
\newtheorem{corollary}[theorem]{Corollary}
 \newtheorem{asmp}[theorem]{Assumption}
\newcommand{\beq}{\begin{equation}}
\newcommand{\eeq}{\end{equation}}
\newcommand{\beqa}{\begin{eqnarray}}
\newcommand{\eeqa}{\end{eqnarray}}
\newcommand{\beqs}{\begin{equation*}}
\newcommand{\eeqs}{\end{equation*}}
\newcommand{\beqas}{\begin{eqnarray*}}
\newcommand{\eeqas}{\end{eqnarray*}}
\newtcbox{\mymath}[1][]{%
    nobeforeafter, math upper, tcbox raise base,
    enhanced, colframe=blue!30!black,
    colback=blue!30, boxrule=1pt,
    #1}
\def\R{\mathbb{R}}
\title{Robust Accelerated Gradient Methods for Smooth Strongly Convex Functions}
\author{
\begin{tabular}[t]{c@{\extracolsep{2em}}c @{\extracolsep{2em}}c} 
\textbf{Necdet Serhat Aybat}\footnote{Authors are arranged in alphabetical order.} & \textbf{Alireza Fallah}$^*$\\
Pennsylvania State University & Massachusetts Institute of Technology \\
University Park, PA, USA & Cambridge, MA, USA \\
nsa10@psu.edu & afallah@mit.edu \\ \\ 
\textbf{Mert G\"urb\"uzbalaban}$^*$ & \textbf{Asuman Ozdaglar}$^*$ \\
Rutgers University & Massachusetts Institute of Technology\\
Piscataway, NJ, USA & Cambridge, MA, USA \\
mg1366@rutgers.edu & asuman@mit.edu
\end{tabular}
}
\date{}
\begin{document}
\maketitle

\begin{abstract} We study the trade-offs between convergence rate and robustness to gradient errors in designing a first-order algorithm. We focus on gradient descent (GD) and accelerated gradient (AG) methods for minimizing strongly convex functions when the gradient has random errors in the form of {additive white noise}. With gradient errors, the function values of the iterates need not converge to the optimal value; hence, we define the robustness of an algorithm to noise as the asymptotic expected suboptimality of the iterate sequence to input noise power. For this robustness measure, we provide exact expressions for the quadratic case using tools from robust control theory and tight upper bounds for the smooth strongly convex case using Lyapunov functions certified through matrix inequalities. We use these characterizations within an optimization problem which selects parameters of each 
algorithm to achieve a particular trade-off between rate and robustness. Our results show that AG can achieve acceleration while being more robust to random gradient errors. This behavior is quite different than previously reported in the deterministic gradient noise setting. We also establish some connections between the robustness of an algorithm and how quickly it can converge back to the optimal solution if it is perturbed from the optimal point with deterministic noise. Our framework also leads to practical algorithms that can perform better than other state-of-the-art methods in the presence of random gradient noise. 
\end{abstract}

\section{Introduction}
For many large-scale convex optimization 
and machine learning problems
, first-order methods have been the leading computational approach for computing low-to-medium accuracy solutions because of their cheap iterations and mild dependence on the problem dimension and data size. The typical analysis of first-order methods assumes the availability of exact gradient information and provides statements on the rate of convergence to the optimal solution as the main performance criterion. However, in many applications, the gradient contains deterministic or stochastic errors either because the gradient is computed by inexactly solving an auxiliary problem~\cite{aspremontSmooth08,devolder2014first}, or the method itself involves errors with respect to the full gradient as in standard incremental gradient, stochastic gradient, and stochastic approximation methods \cite{PolyakJuditskyAcceleration,robbins1951stochastic,bertsekas1999nonlinear,bertsekas2011incremental}. When there are persistent errors in gradients, the iterates do not converge and could oscillate in a neighborhood of the optimal solution or may even diverge \cite{bertsekas1999nonlinear,bertsekas2011incremental,devolder2014first,flammarion2015averaging}. This makes robustness of the algorithms to gradient errors (in terms of solution accuracy) another important performance objective \cite{devolder2014first,Hardt-blog}. In particular, even though accelerated gradient method proposed by Nesterov converges faster than gradient descent (GD) in the absence of noise for convex problems \cite{nesterov1983method}, 
it was shown that they are less robust to errors, i.e., accelerated methods require higher precision gradient information than 
GD to achieve the same solution accuracy \cite{devolder2014first,aspremontSmooth08,flammarion2015averaging,Schmidt11InexactProx}.

In this paper, we study the trade-offs between convergence rate and robustness to gradient errors in designing a first-order algorithm. 
We focus on 
GD and Nesterov's accelerated gradient (AG) method for minimizing strongly convex smooth functions when the gradient has stochastic errors and investigate how the parameters of each 
algorithm should be set to achieve a particular trade-off between these two performance objectives. To study this question systematically, we employ tools from control theory whereby we represent each of the algorithms as a dynamical system. This approach has attracted recent attention and has already led to a number of insights for the design and analysis of optimization algorithms \cite{lessard2016analysis,hu2017dissipativity,cyrus2017robust,fazlyab2017analysis,wilson2016lyapunov,hu2017approxSG}. The novelty of our work is to use this 
approach to provide explicit characterizations of robustness, which can then be placed in a computationally tractable 
 optimization problem for selecting the algorithm parameters 
 to systematically achieve a desired trade-off.

We first focus on 
problems with a strongly convex quadratic objective function. For this case, the rate of convergence of any of the two algorithms we study is given by the spectral radius of the ``state-transition" matrix in the dynamical system representation. To characterize robustness, we consider the asymptotic expected suboptimality for the centered iterate sequence (output vector of the dynamical system) per unit noise which is a measure of the asymptotic accuracy of the iterates. For the quadratic case we show that this limit 
exists and can be 
characterized using the \textit{$H_2$ norm} of a transformed linear dynamical system. The $H_2$ norm is a fundamental measure for quantifying robustness of a linear system to noise~\cite{zhou1996robust} and admits various definitions and characterizations. We focus on a particular representation of the $H_2$ norm that requires
the solution of a discrete Lyapunov equation. This representation 
leads to explicit expressions for robustness of GD and AG. 

Using this result, we study the rate and robustness trade-off of 
the GD method for minimizing quadratic strongly convex functions. The spectral radius of the state-transition matrix corresponding to GD dynamics, hence, the rate of convergence for GD, can be expressed in terms of the smallest and largest eigenvalues of the positive definite matrix $Q$ defining the Hessian of the strongly convex quadratic objective. We show that our robustness measure admits a tractable characterization for GD in terms of the 
spectrum of $Q$. 
We also show 
a fundamental lower bound on the robustness level of an algorithm for any achievable convergence rate. 

We next consider 
the AG method defined 
by two parameters: stepsize $\alpha$ and momentum parameter $\beta$. 
Our first step is to characterize the \textit{stability region} of the method, i.e., the set of nonnegative $(\alpha, \be)$ for which the spectral radius of the state-transition matrix is less than or equal to one. Similar to GD, we then provide an explicit characterization of the $H_2$ norm of 
the dynamical system representation of AG. We use these explicit expressions for both GD and AG within an optimization problem for selecting the parameters to minimize the robustness measure subject to a given upper bound on the convergence rate. Our results show that AG with properly selected parameters is superior to GD in the sense that AG can achieve the same rate with GD while being more robust to noise; similarly, AG can be tuned to be faster than GD while achieving the same robustness level. This behavior 
contrasts with the comparison of GD and AG in the deterministic gradient error setting in \cite{devolder2014first}, 
which shows 
GD performance degrades gracefully while AG may accumulate error.
These results show the random 
and deterministic noise settings have different behavior.

In our second set of results, we extend our analysis to handle minimization of strongly convex smooth functions, i.e., $\min_{x\in\R^d}f(x)$. 
In this setting, the dynamical system representation of a first-order algorithm will no longer be a linear system due to the nonlinear gradient map, $\nabla f$. The analysis in this section is not limited to GD or AG; in particular, given a first-order optimization algorithm, we use a linear dynamic system with {\it nonlinear} feedback to model the dynamic behavior of the algorithm. 
For these systems, we again use a robustness measure that can be seen as 
a discrete-time version of a more general $H_2$-norm 
for nonlinear systems~\cite{fleming1995risk} --- see also ~\cite{Stoorvogel93} for a similar definition given for linear systems with nonlinear feedback. We derive upper bounds on the robustness measures for GD and AG using Lyapunov functions certified through matrix inequalities and investigate the trade-off between rate and robustness.


In addition to the above cited papers, 
Devolder's Ph.D. thesis \cite{devolder2013thesis} is closely related to our paper. Chapters 4 and 6 of this thesis, considered smooth weakly convex functions under a deterministic oracle model 
whereas Chapter 7 focused on a stochastic oracle model; 
these general oracles can model inexactness in 
the gradients as well as function evaluations. 
In the deterministic oracle case, Devolder shows that primal gradient method (PGM) and the dual gradient method (DGM) on smooth weakly convex objectives exhibit slow convergence with a rate $\mathcal{O}(1/k)$ but without accumulation of errors (the total effect of errors after $k$ iterations is equal to the individual error $\delta$ of each first-order information); whereas accelerated gradient methods converge faster with rate $\mathcal{O}(\frac{1}{k^2})$ but suffers from accumulation of errors at a linear rate $\mathcal{O}(k\delta)$. Based on these observations, Devolder et al. \cite{devolder2013intermediate} design a novel family of first-order methods called intermediate gradient methods (IGM) for solving smooth weakly convex problems; these methods have an intermediate speed and intermediate sensitivity to gradient errors, i.e., faster than classical gradient methods and more robust to noise than the accelerated gradient methods.  
In the stochastic oracle case, Devolder developed a class of accelerated gradient methods for weakly convex functions with decaying stepsize rules and showed that the expected suboptimality admits the convergence rate $\mathcal{O}\left( \frac{LR^2}{k^2} + \frac{\sigma R}{\sqrt{k}}\right)$ as opposed to the $\mathcal{O}\left( \frac{LR^2}{k} + \frac{\sigma R}{\sqrt{k}}\right)$ rate of PGM and DGM, where $R$ is the distance of the initial point to the optimal solution, $L$ is the Lipschitz constant for the gradient of the objective $f(x)$ and $\sigma$ is the level of the stochastic noise \cite[Ch. 7]{devolder2013thesis}. In his thesis, Devolder studied also smooth and strongly convex objectives under the same deterministic oracle model, showing that both PGM and DGM converge with a rate that is proportional to $\exp(-k \frac{\mu}{L})$ without accumulation of errors where $\mu$ is the strong convexity constant, whereas accelerated gradient converges faster proportional to $\exp(-k \sqrt{\frac{\mu}{L}})$ while the error accumulation behaves like $\sqrt{\frac{L}{\mu}}\delta$ up to a constant \cite[Chapter 5]{devolder2013thesis}. On the other hand, the smooth and strong convex objectives subject to stochastic errors was left as future work \cite[Ch. 8.1.1]{devolder2013thesis}; 
and this is the setting considered in our paper where we focus on \emph{stochastic} additive gradient errors for \emph{strongly convex} objectives, which arises in a number of problems in machine learning and large-scale optimization
\cite{Hardt-blog,bassily2014private,raginsky2017non}.
In this setting, Ghadimi and Lan~\cite{ghadimi2012optimal,ghadimi2013optimal} propose an accelerated
method called AC-SA for solving strongly convex composite optimization problems obtaining an optimal rate matching the lower
complexity bounds for stochastic optimization. 
{Flammarion and Bach \cite{flammarion2015averaging} considered accelerated versions of gradient descent for quadratic optimization 
that attain the optimal rates for both the bias and variance terms, respectively, in the performance bounds}. Michalowsky and Ebenbauer \cite{michalowsky2014multidimensional} posed the design of deterministic gradient algorithms as a state feedback problem and used robust control theory and linear matrix inequalities to 
study them. Mohammadi et al. \cite{mohammadi2018variance} 
examined the sensitivity of accelerated algorithms to stochastic noise for strongly convex quadratic functions in terms of the steady-state variance of the optimization variable. Finally, Dvurechensky et al. \cite{dvurechensky2016stochastic} consider composite convex optimization problems with inexact first-order oracles having both deterministic and stochastic errors; indeed, their inexact oracle is an extension of the one adopted in~\cite{devolder2013intermediate, devolder2014first} to include stochastic errors. For this setting, Dvurechensky et al. \cite{dvurechensky2016stochastic} propose a stochastic version of the intermediate gradient method in~\cite{devolder2013intermediate} and analyze the convergence rate in terms of expected suboptimality and error accumulation due to inexact oracle; the proposed algorithm in~\cite{devolder2013intermediate} has complexity bounds 
matching the optimal lower complexity bounds for composite convex problems with stochastic inexact oracle as in~\cite{ghadimi2012optimal,ghadimi2013optimal}.
Finally, Hu et al. \cite{hu2017approxSG} analyze the stochastic gradient method under deterministic noise and study the effect of the stepsize on the convergence rate and the asymptotic neighborhood of convergence.
\color{black}
These papers focus on convergence rate of the algorithms, whereas our goal is to define robustness and design algorithms to successfully trade-off different objectives. Furthermore, we make some connections between the robustness of a first-order method and its behavior when perturbed from the optimal solution and  show that AG is more resilient to perturbations 
 {in the sense that it} recovers the optimal point with less energy compared to GD for sufficiently small stepsizes.
We will also demonstrate in our numerical experiments that the framework we propose is competitive in practice with the existing state-of-the-art algorithms from the literature and can outperform them in some problems, illustrating the potential of the proposed framework in practice. In fact, {in a companion paper, we use our framework to develop a universally} 
optimal multi-stage stochastic gradient algorithm for stochastic optimization \cite{aybat2019universally}
which achieves the lower bounds without 
assuming a known bound for suboptimality or the variance of the gradient noise.
\subsection{Preliminaries and Notation}
For two functions $g,h$ defined over positive integers, we say $f = \Theta(g)$ if there exist constants $C_l, C_u$ and $n_0$ such that $C_l g(n) \leq f(n) \leq C_u g(n)$ for every positive integer $n\geq n_0$. For a set $I$, $|I|$ denotes the cardinality of the set $I$.
Let $\delta[k]$ denote the Kronecker delta function, i.e., $\delta[0]=1$ and $\delta[k]=0$ for any 
integer $k\geq 1$. The $d\times d$ identity and zero matrices are denoted by $\id$ and $0_d$, respectively. We define $\diag(a_1,...,a_d)$ or $\diag([a_i]_{i=1}^d)$ as the diagonal matrix with diagonal entries $a_1,...,a_d$; similarly, $\diag([A_i]_{i=1}^d)$ denotes a block diagonal matrix with $i$-th block equal to $A_i\in\R^{n_i\times n_i}$ for $i=1,\ldots, d$. For matrix $A \in \mathbb{R}^{d\times d}$, $\Tr(A)$ denotes the trace of $A$. We use the superscript $^\top$ to denote the transpose of a vector or a matrix depending on the context. The spectral radius of $A$ is defined as the largest absolute value of its eigenvalues and is denoted by $\rho(A)$. We say that a square matrix $A$ is \emph{discrete-time stable}, if all of its eigenvalues lie strictly inside the unit disc in the complex plane, i.e., if $\rho(A)<1$. Throughout this paper, all vectors are represented as column vectors. Let $\mathbb{S}^m$ be the set of all symmetric $m\times m$ matrices. Similarly, $\mathbb{S}^m_{++}$ ($\mathbb{S}^m_{+}$) denote the set of all symmetric and positive (semi)-definite $m \times m$ matrices. For two matrices $A \in \mathbb{R}^{m \times n}$ and $B \in \mathbb{R}^{p \times q}$, their Kronecker product is denoted by $A\otimes B$.
For scalars $0<\mu\leq L$, we define $S_{\mu, L}(\mathbb{R}^d)$ as the set of 
continuously differentiable functions $f:\mathbb{R}^d\rightarrow \mathbb{R}$ that are strongly convex with modulus $\mu$ and have Lipschitz-continuous gradients with constant $L$, i.e., 
{
\begin{align}
\frac{L}{2} \Vert x-y \Vert^2 \geq f(x)-f(y)-\nabla f(y)^\top (x-y)&\geq \frac{\mu}{2} \Vert x-y \Vert^2,\quad \forall~x,y \in \mathbb{R}^d,
\label{ineq_S}
\end{align}}%
(see e.g. \cite{nesterov_convex}) where the gradient $\nabla f$ is represented as a column vector. 
The ratio $\kappa \triangleq \frac{L}{\mu}$ is called the \emph{condition number} of $f$. 
In many places, we also use the following relation for strongly convex smooth functions. 
\begin{lemma}[Theorem 2.1.12 in \cite{nesterov_convex}] \label{stronglyconvex_lem}
If \Sml, then for every $x,y \in \mathbb{R}^d$,
{
\begin{equation*}
(\nabla f(x) - \nabla f(y))^\top (x-y) \geq \frac{\mu L}{\mu +L} \|x-y\|^2 + \frac{1}{\mu+L} \|\nabla f(x) - \nabla f(y)\|^2.
\end{equation*}}%
\end{lemma}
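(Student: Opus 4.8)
The plan is to reduce the strongly convex statement to a purely convex one by subtracting the quadratic $\tfrac{\mu}{2}\|\cdot\|^2$, invoking the standard co-coercivity inequality for convex functions with Lipschitz-continuous gradient, and then unwinding the substitution through elementary algebra.

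First I would set $\phi(x) \triangleq f(x) - \tfrac{\mu}{2}\|x\|^2$, so that $\nabla\phi(x) = \nabla f(x) - \mu x$. A short computation using \eqref{ineq_S} shows that the Bregman-type quantity $\phi(x) - \phi(y) - \nabla\phi(y)^\top(x-y)$ equals $\big(f(x) - f(y) - \nabla f(y)^\top(x-y)\big) - \tfrac{\mu}{2}\|x-y\|^2$, which by \eqref{ineq_S} lies between $0$ and $\tfrac{L-\mu}{2}\|x-y\|^2$. Hence $\phi$ is convex and has $(L-\mu)$-Lipschitz gradient.

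The key ingredient is then the co-coercivity of the gradient of a convex, $(L-\mu)$-smooth function:
\[
(\nabla\phi(x) - \nabla\phi(y))^\top(x-y) \geq \frac{1}{L-\mu}\,\|\nabla\phi(x) - \nabla\phi(y)\|^2 .
\]
I would prove this in the usual two-step way: applying the descent lemma to the convex auxiliary function $z\mapsto \phi(z) - \nabla\phi(u)^\top z$, which attains its minimum at $z=u$, yields $\phi(v) \geq \phi(u) + \nabla\phi(u)^\top(v-u) + \tfrac{1}{2(L-\mu)}\|\nabla\phi(v) - \nabla\phi(u)\|^2$; summing this bound with the roles of $u$ and $v$ interchanged gives the displayed co-coercivity inequality.

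Finally I would substitute $\nabla\phi(x) - \nabla\phi(y) = (\nabla f(x) - \nabla f(y)) - \mu(x-y)$ on both sides, expand the squared norm on the right, and collect terms. Abbreviating $a = (\nabla f(x) - \nabla f(y))^\top(x-y)$, $b = \|x-y\|^2$, and $c = \|\nabla f(x) - \nabla f(y)\|^2$, the co-coercivity bound reads $a - \mu b \geq \tfrac{1}{L-\mu}(c - 2\mu a + \mu^2 b)$; multiplying through by $L-\mu>0$ and rearranging collapses to $(L+\mu)\,a \geq c + \mu L\, b$, which is precisely the claimed inequality (the degenerate case $L=\mu$ forces $\nabla\phi$ constant and is checked directly, yielding equality). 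The only genuine obstacle is establishing the co-coercivity inequality itself; the reduction and the concluding algebra are routine once the convexity and $(L-\mu)$-smoothness of $\phi$ are in hand.
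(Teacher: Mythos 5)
Your proof is correct: the reduction via $\phi(x) \triangleq f(x) - \tfrac{\mu}{2}\|x\|^2$ (whose Bregman divergence you correctly compute to lie in $[0,\tfrac{L-\mu}{2}\|x-y\|^2]$ using \eqref{ineq_S}), the two-step derivation of co-coercivity for the convex $(L-\mu)$-smooth $\phi$, and the concluding algebra --- including your explicit check of the degenerate case $L=\mu$, where $\nabla\phi$ is constant and equality holds --- all verify. The paper itself offers no proof, citing Theorem 2.1.12 of Nesterov's book, and your argument is precisely the standard proof given in that reference, so you have reproduced essentially the same approach as the paper's source.
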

For our subsequent analysis, 
we represent the preceding relation in matrix form: 
{
\begin{equation}\label{GD_X}
\begin{bmatrix} 
	x-y\\
    \nabla f(x) - \nabla f(y)
\end{bmatrix}^\top
\begin{bmatrix} 
	2\mu L I_d & -(\mu+L) I_d\\
    -(\mu+L) I_d & 2 I_d
\end{bmatrix}
\begin{bmatrix} 
	x-y\\
    \nabla f(x) - \nabla f(y)
\end{bmatrix} \leq 0, \quad \forall\ x,y\in\R^d.
\end{equation}}%
\section{Optimization Algorithms as Dynamical Systems}
\label{sec-dyn-system-intro}
Our goal 
is to design first-order algorithms with certain rate-robustness balance to solve
\begin{equation}\label{main-opt-prob}
f^*\triangleq \min_{x \in \mathbb{R}^d} f(x),\quad \hbox{where}\quad f\in S_{\mu,L}(\mathbb{R}^d),
\end{equation}
when the gradient $\nabla f$ is corrupted by random errors in the form of additive white noise. We denote the unique optimal solution of problem \eqref{main-opt-prob} by $x^*$. 
We will focus on Gradient Descent (GD) and Accelerated Gradient Descent (AG) 
and show how the parameters of these algorithms can be tuned to optimize various performance metrics.

Our analysis builds on a dynamical system representation of these algorithms. A discrete-time dynamical system with a feedback rule $\phi$ can be expressed as 
\begin{align}
\label{dyn_sys:main}
\xi_{k+1} = A \xi_k + B u_k,\quad  
y_k = C \xi_k + D u_k,\quad  
u_k  = \phi(y_k), 
\end{align}
for $k\geq 0$, where $\xi_k \in \mathbb{R}^m$ is the \emph{state}, $u_k\in\mathbb{R}^d$ is the \emph{input}, and $y_k \in \mathbb{R}^d$ is the \emph{output}. The matrices $A, B, C$ and $D$ are called the \emph{system matrices}; they are fixed matrices with appropriate dimensions. The function $\phi:\R^d \to \R^d$ defines the feedback rule that relates the output of this system to its input.

Consider the GD method for solving problem \eqref{main-opt-prob}. Given $x_0\in \mathbb{R}^d$, the GD iterations with a constant stepsize $\alpha>0$ take the following form for $k\geq 0$:
\begin{equation} \label{gradient_update} 
x_{k+1} = x_k - \alpha \nabla f(x_k),
\end{equation}
which can be cast as \eqref{dyn_sys:main} by setting $\xi_k =x_k$, $\phi(\cdot)=\nabla f(\cdot)$ and letting 
\begin{equation}\label{gradient_ABCD}
A=\id, \quad B=-\alpha\id,\quad C=\id, \quad D=0_d.
\end{equation}
On the other hand, when implemented on \eqref{main-opt-prob}, the AG method with constant stepsize $\alpha>0$ and momentum parameter $\beta>0$
generates the iterates as follows for $k\geq 0$:
\begin{equation}\label{nest:main}
y_k = (1+\beta)x_k - \beta x_{k-1}, \quad x_{k+1} = y_k - \alpha \nabla f(y_k). 
\end{equation}
Setting $\phi(\cdot)=\nabla f(\cdot)$ and defining the state vector
$\xi_k = \begin{bmatrix} 
	x_k^\top & x_{k-1}^\top
\end{bmatrix}^\top$,
AG iterations can be rewritten as in \eqref{dyn_sys:main} for
{
\begin{equation} \label{nest_ABCD} 
A = \begin{bmatrix} 
	(1+\be)\id & -\be\id \\ 
    \id 	  &  \zeros 
\end{bmatrix},\quad 
B = \begin{bmatrix} 
	-\al \id  \\ 
     \zeros 
\end{bmatrix},\quad 
C = \begin{bmatrix} 
	(1+\be)\id & -\be\id  
\end{bmatrix},\quad 
D=0_d.
\end{equation}}%
For both algorithms, the iterates $x_k$ are captured by the state $\xi_k$ of the dynamical system representation. 

In this work, we assume that at each iteration $k\geq 0$, instead of the actual gradient $\nabla f(y_k)$, we have access to a noisy version $\nabla f(y_k) + w_k$ where $w_k \in \R^d$ represents the additive noise.
In the dynamical system representation, the noisy iterations of the GD and AG algorithms could be written as
\begin{align}\label{noisy-dyn_sys:main}
\xi_{k+1} = A \xi_k + B (u_k + w_k), \quad
y_k  = C \xi_k, \quad
u_k = \nabla f(y_k),
\end{align}
where $A$, $B$, $C$, and $D$ are selected according to \eqref{gradient_ABCD} for GD or \eqref{nest_ABCD} for AG.\footnote{Although our focus in this paper will be primarily on GD and AG dynamics under noise, it will be clear from our discussion that our ideas naturally extend to many other algorithms that admit such a dynamical system representation including the heavy-ball and the robust momentum methods \cite{cyrus2017robust,lessard2016analysis,hu2017dissipativity}.}
Except for Section~\ref{sec:stability} where we study deterministic perturbations, we assume throughout this paper that the sequence $\{w_k\}_k$ of random variables satisfies the following assumption.
\begin{asmp}\label{asmp1}
For any $k \geq 0$, the random variable $w_k$ in \eqref{noisy-dyn_sys:main} is zero mean and independent from $\{\xi_i\}_{i=1}^k$ and $\{y_i\}_{i=1}^k$. In addition, there exists a scalar $\sigma>0$ such that $\E(w_k w_k^\top) =\sigma^2 \id$ for any $k \geq 0$.  
\end{asmp}
This noise structure arises naturally in stochastic optimization where the full gradient is approximated from finitely many samples (see e.g. \cite{PolyakJuditskyAcceleration}), in regression problems \cite{flammarion2015averaging,bachGD,bach-non-strongly-cvx} as well as in optimization algorithms where the full gradient is subject to an isotropic noise or uncertainty (see e.g. \cite{jadbabaie2013combinatorial,jadbabaie2016performance}). The special case when $w_k$ is Gaussian also appears in algorithms where random noise is intentionally injected to the gradient to guarantee privacy (e.g. \cite{bassily2014private}) or to ensure global convergence, as in the Euler-Mariyama discretization of the overdamped and underdamped Langevin dynamics \cite{BartlettUnderdampedLangevin17,Eberle17,GGZ-underdamped-18,gao2018breaking}. It will be clear from our discussion that our results can be extended to the {\it structured noise} case, i.e., when the covariance matrix $\E(w_k w_k^\top) = S$ for some positive definite matrix $S$.

Consider a first-order algorithm (e.g., GD or AG) subject to additive noise satisfying Assumption~\ref{asmp1}. For this scenario, where the noise is {\it persistent}, i.e., it does not decay over time, it is possible that $\lim_{k\rightarrow\infty}\mathbb{E}[f(x_k)]$ may {\it not} exist; therefore, one natural way of defining {\it robustness} of an algorithm to noise is to consider the worst-case limiting suboptimality along all possible subsequences, i.e., 
\begin{equation} \label{eq:robust_def}
\mathcal{J} \triangleq \limsup\limits_{k\to\infty} \frac{1}{\sigma^2}\E[f(x_{k}) - f^*]. 
\end{equation}
Clearly, $\mathcal{J}$ depends on the choice of algorithm parameters. Moreover, since the limit  $\lim_{k\rightarrow\infty}f(x_k)$ may not exist when the gradients are perturbed by persistent additive noise, both notions of ``convergence'' and ``convergence rate'' are vague. To make these terms more precise in our context, consider the line segment $[f^*,\  f^*+\sigma^2\mathcal{J}]$. In the subsequent sections of the paper, we show that $\{f(x_k)\}_{k\geq 0}$ sequence converges to this line segment {\it linearly} with a rate depending on the algorithm parameters. Thus, the aim of this paper is to investigate this trade-off between the robustness and rate associated with a given first-order algorithm and to understand the dependence of these key notions of convergence on the choice of algorithm parameters. We believe that achieving this goal would provide an important leverage to decision makers to set the parameters in such a way that fits the purpose of the application. We focus on the expected suboptimality $\{\E[f(x_k) - f^*]\}_k$ in the text since this is typically the object of study in the literature for quantifying the performance of similar algorithms.

It is worth emphasizing that 
{robustness} can also be studied in the solution space. Indeed, let $\{x_k\}_{k\geq 0}$ be a random iterate sequence corresponding to \eqref{noisy-dyn_sys:main} where $\{w_k\}_k$ models the additive noise sequence and satisfies Assumption \ref{asmp1}. Due to the noise injected at each step, the sequence $\{x_k\}$ will oscillate around the optimal solution with a non-zero variance; therefore, 
another natural metric to measure robustness is the worst-case limiting distance to the optimal solution $x^*$ along all possible iterate subsequences, 
i.e., 
\begin{equation} \label{eq:robust_def_alternative}
\mathcal{J}' \triangleq \limsup_{k\to \infty}\limits 
\frac{1}{\sigma^2}\E[\norm{x_k-x^*}^2].
\end{equation}%
Similarly, the convergence rate could be defined to be the linear rate that $\{x_k\}_{k\geq 0}$ converges to the ball $\{x\in\R^d:\ \norm{x-x^*}^2\leq \sigma^2\mathcal{J}'\}$. The quantity $\mathcal{J}'$ can be viewed as the \emph{robustness to noise in terms of iterates} because it is 
equal to the ratio of the power of the iterates to the power of the input noise, measuring how much a system amplifies input noise. In particular, the smaller this measure is, the more robust a system is under additive random noise.\footnote{See Appendix \ref{iterate_case}, provided as a supplementary material, where we derive robustness results 
based on $\mathcal{J}'$ for both GD and AG.} In Section~\ref{quad_case}, we 
{remark} that $\mathcal{J}'$ is indeed the $H_2$ norm of the dynamical system in \eqref{noisy-dyn_sys:main} with $C=I$, a notion being applicable to both linear and non-linear systems \cite{Stoorvogel93,fleming1995risk}. Later in Section~\ref{sec:stability}, we will use $\mathcal{J}'$ to make some connections between the robustness of a first-order method with its behavior when perturbed from the optimal solution.

\section{Quadratic Functions}\label{quad_case} In order to understand the effect of noise on the dynamics, we find it insightful to first focus on the case where the objective function is quadratic.
Let $f\in S_{\mu,L}(\mathbb{R}^d)$ be a quadratic function given by $f(x) = \tfrac{1}{2} x^\top Q x - p^\top x + r$ where $Q$ is symmetric and positive definite with eigenvalues $\{\lambda_i\}_{i=1}^d$ listed in increasing order satisfying $0< \mu = \lambda_1 \leq \lambda_2 \leq \dots \leq \lambda_d = L$. The gradient of $f$ is given by 
\begin{equation}\label{grad_formula}
\nabla f(x) =  Q x - p = Q(x-x^*),
\end{equation}
where $x^* = Q^{-1} p$ is the optimal solution to problem \eqref{main-opt-prob}. Plugging the formula for the gradient $\nabla f(y_k)$ from \eqref{grad_formula} into \eqref{noisy-dyn_sys:main}, we obtain
\begin{align}\label{dyn_sys_quad:main} 
\xi_{k+1} = (A+BQC)\xi_k-BQx^* + B w_k,\quad y_k = C \xi_k.
\end{align}
With $\xi^*$ equal to $x^*$ for GD and $[x^{*^\top} x^{*^\top}]^\top$ for AG, in both cases we have $\xi^* = A\xi^*$ and $x^* = C \xi^*$ where $A$ and $C$ are given in \eqref{gradient_ABCD} and \eqref{nest_ABCD} for GD and AG, respectively.
Therefore, defining $\ty_k \triangleq y_k-x^*$ and $\tx_k \triangleq \xi_k - \xi^*$,
\eqref{dyn_sys_quad:main} yields
\begin{align}\label{dyn_sys_quad2:main}
\tx_{k+1} = A_Q \tx_k + B w_k,\quad \ty_k = C \tx_k,
\end{align}
where $A_Q$ is the state-transition matrix given by $A_Q=A+BQC$.

In the absence of noise (when $w_k=0$ for all $k$), if $\rho(A_Q)$ is less than one, then we clearly have $\tx_k \to 0$ and $\ty_k \to 0$ {\it linearly}. As a consequence, the 
suboptimality, 
$f(x_k)-f^*$, goes to zero linearly as well. On the other hand, when the gradients are perturbed by random additive noise, as we shall discuss in the next section, $\E[f(x_k)-f^*]$ does {\it not} go to zero.
\subsection{Performance metrics under gradient noise: Rate and robustness}\label{rate_robustness_quad}
In this section, we use the dynamical system representation of the algorithms given in \eqref{dyn_sys_quad2:main} to study the limiting behavior of the expected suboptimality $\E[f(x_k)-f^*]$. 
We show that this sequence converges, i.e., the limit of the expected suboptimality exists and it is equal to the limit superior in \eqref{eq:robust_def}.
We also provide the associated convergence rate,
and present an explicit characterization of the limiting value using insights from robust control theory. More specifically, consider the shifted state sequence $\{\tx_k\}_k$ 
generated according to \eqref{dyn_sys_quad2:main}. Since $w_k$ is zero mean for all $k\geq 0$ (Assumption \ref{asmp1}), by taking the expectation of \eqref{dyn_sys_quad2:main}, we obtain  
\begin{align}\label{iterates_rate}
\E[\tx_{k}] &= A_Q^k~\tx_{0},\quad \forall\ k\geq 0.
\end{align}
Therefore, under the assumption that $\rho(A_Q) <1$, the sequence $\{\E[\tx_{k}]\}_k$ converges to zero with an asymptotic linear rate $\rho(A_Q)$. Note that the state sequence $\{\xi_k\}_k$ and the iterates $\{x_k\}_k$ can be related by defining $T = I_d$ for GD and $T=[I_d \quad 0_d]$ for AG 
so that $x_k = T \xi_k$ for all $k\geq 0$. 

Recall the robustness definition given in~\eqref{eq:robust_def}. In the next lemma, we focus on the 
suboptimality sequence, $\{f(x_k)-f^*\}_k$ for quadratic $f$ and 
we show that the limit, 
\begin{equation}\label{h_2_stochastic}
\mathcal{J} = \frac{1}{\sigma^2}\lim_{k \to \infty} \E[f(x_k)-f^*],
\end{equation}
exists; 
moreover, for some 
$\{\varepsilon_k\}_k\subset[0,\infty)$ such that $\lim_{k\to\infty} \varepsilon_k = 0$, we have 
\begin{equation}\label{quad_func}
\big|\E[f(x_k)-f^*]-\sigma^2 \mathcal{J}\big| \leq \psi_0~ \big(\rho(A_Q)+\varepsilon_k\big)^{2k},\quad \forall\ k\geq 0,
\end{equation}
where 
$\rho(A_Q)$ is the spectral radius of $A_Q$, and $\psi_0$ is a constant that may depend on the initialization $x_0$. This shows that the sequence $\{\E[f(x_k)-f^*]\}_k$ converges to an interval around the origin with radius $\sigma^2 \mathcal{J}$, and the convergence is linear with an asymptotical rate that is arbitrarily close to $\rho(A_Q)^2$. 
It is therefore natural to define the normalized radius, $\mathcal{J}$, as \emph{robustness} of the system to gradient noise, i.e., if this radius is bigger, it means that the asymptotic error of the algorithm in terms of the function value is larger; hence, 
the algorithm is less robust to the injected noise.

The limit in \eqref{h_2_stochastic} can be evaluated by using the tools from \emph{standard $H_2$ theory} arising in robust control of dynamical systems (see e.g. \cite{haddad1994parameter}) as we shall explain below. The $H_2$-norm is a well-known fundamental metric for quantifying the robustness of a linear dynamical system to noise in control engineering and has been widely used in designing the parameters of control systems subject to noise. Given arbitrary matrices $(A,B,C)$ and $D=0_d$, consider a linear system as in \eqref{dyn_sys:main} but 
{\it without} feedback $\phi$. Suppose there exists $\xi^*$ and $y^*$ such that $\xi^*=A\xi^*$ and $y^*=C\xi^*$.
The $H_2$-norm of this linear system, 
denoted by $H_2(A,B,C)$, measures the stationary variance of the output response $\{y_k\}$ to unit white noise input \cite{zhou1996robust}, i.e.,
{
   \beq H_2^2(A,B,C) \triangleq \lim_{k \to \infty} \frac{1}{\sigma^2}\E \|y_k-y^*\|^2. \label{def-H2-norm}
   \eeq
}%
The $H_2$ norm admits alternative definitions, which are all equivalent for linear systems (see e.g. \cite{zhou1996robust,Stoorvogel93}). When it is clear from the context, we will remove the dependency of the $H_2$ norm to the system matrices $(A,B,C)$. 
The $H_2$-norm can be computed as 
\begin{equation}\label{formula-h2_1}
H_2^2(A,B,C) = \Tr(C X_0 C^\top)
\end{equation}
where $X_0$ solves the \emph{discrete Lyapunov} equation\footnote{The value of $H_2^2$ can also be computed as
$\Tr(B^\top \tilde{X_0} B)$ where $\tilde{X_0}$ solves $A^\top \tilde{X_0} A - \tilde{X_0} + C^\top C = 0$.}:
\begin{equation}\label{eqn-lyapunov}
A X_0 A^\top - X_0 + B B^\top = 0
\end{equation}
(see e.g. \cite{haddad1994parameter,zhou1996robust}). Moreover, if $BB^\top$ is positive definite and $A$ is \textit{discrete-time stable} (i.e., $\rho(A)<1$), the solution admits the following formula:
\begin{equation}\label{soln-Lyapunov-eqn}
X_0 = \sum_{k=0}^\infty (A^\top)^k B^\top B  A^k
\end{equation}
(see e.g. \cite{zhou1996robust}). We will show in the following lemma that the limit $\mathcal{J}$ in \eqref{h_2_stochastic} exists for quadratic objectives. Our proof technique is based on relating $\mathcal{J}$ to the $H_2$ norm of a transformed linear system as follows: We first rewrite the suboptimality $f(x_k) - f^*$ in terms of the iterates $x_k$: 
\begin{align}\label{quad_func_err_3}
f(x_k) - f^* 
= \tfrac{1}{2} (x_k-x^*)^\top Q (x_k-x^*) = \tfrac{1}{2} (T\tx_k)^\top Q (T\tx_k) 
= (R T \tx_k)^\top (R T \tx_k),
\end{align}
 where we used the fact that 
 $x_k = T \xi_k$, 
 and $\frac{1}{2} Q = R^\top R$ is the Cholesky decomposition of $\frac{1}{2}Q$. 
If we consider the system defined by matrices $(A_Q, B, RT)$, it follows from the definition of the $H_2$ norm \eqref{def-H2-norm} and \eqref{quad_func_err_3} that 
    \beq \mathcal{J} = H_2^2(A_Q,B,\tiR) \quad \mbox{where} \quad \tiR \triangleq RT. \eeq
By \eqref{formula-h2_1} and \eqref{eqn-lyapunov}, we have $\mathcal{J} = \Tr(\tiR X \tiR^\top)$,
where $X$ is the solution to 
\begin{equation}\label{lyapunov_AQ}
A_Q X A_Q^\top - X + B B^\top = 0.
\end{equation}
\begin{lemma} \label{H_2_quad_equi}
Consider the linear dynamical system \eqref{dyn_sys_quad2:main} defined by the matrices $(A_Q, B, C)$. If 
$\rho(A_Q)<1$, 
then the limit in \eqref{h_2_stochastic} exists, i.e., $\sigma^2\mathcal{J} = \lim_{k \to \infty} \E[f(x_k)-f^*]$, and there exists a non-negative sequence $\{\varepsilon_k\}_k$ such that $\lim_k \varepsilon_k = 0$ and
\begin{eqnarray*}
\left|\E[f(x_k)-f^*]-\sigma^2 \mathcal{J}\right| \leq~\psi_0~\left(\rho(A_Q) +\varepsilon_k\right)^{2k}, \quad\forall\ k\geq 0,
\end{eqnarray*}
holds for some explicitly given positive constant $\psi_0$ that depends on the initialization $x_0$. Furthermore, when $A_Q$ is symmetric, 
$\varepsilon_k = 0$ for every $k\geq 0$.
\end{lemma}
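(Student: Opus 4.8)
The plan is to reduce everything to the second-moment matrix $X_k \triangleq \E[\tx_k \tx_k^\top]$ of the shifted state and then to read off the rate from the powers of $\AQ$. First I would use the identity \eqref{quad_func_err_3} to write $f(x_k)-f^* = \tx_k^\top M \tx_k$ with $M \triangleq \tiR^\top \tiR$, so that taking expectations gives $\E[f(x_k)-f^*] = \Tr(M X_k)$; the whole problem thus becomes tracking $\{X_k\}_k$. Starting from the recursion $\tx_{k+1} = \AQ \tx_k + B w_k$ in \eqref{dyn_sys_quad2:main} and using Assumption~\ref{asmp1} (namely, $w_k$ is zero-mean, independent of $\tx_k$, and $\E[w_k w_k^\top]=\sigma^2\id$), the two cross terms vanish and I obtain the linear matrix recursion
\begin{equation*}
X_{k+1} = \AQ X_k \AQ^\top + \sigma^2 BB^\top.
\end{equation*}

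Next I would identify the fixed point and subtract it off. Since $\rho(\AQ)<1$, the discrete Lyapunov equation \eqref{lyapunov_AQ} has a unique solution $X$, and $X_\infty \triangleq \sigma^2 X$ satisfies $\AQ X_\infty \AQ^\top - X_\infty + \sigma^2 BB^\top = 0$. Subtracting this from the recursion shows that the deviation $E_k \triangleq X_k - X_\infty$ obeys the homogeneous recursion $E_{k+1} = \AQ E_k \AQ^\top$, hence by induction $E_k = \AQ^k E_0 (\AQ^k)^\top$ with $E_0 = X_0 - \sigma^2 X$. Applying $\Tr(M\,\cdot)$ and using $\Tr(M X_\infty) = \sigma^2 \Tr(\tiR X \tiR^\top) = \sigma^2 \mathcal{J}$ yields
\begin{equation*}
\E[f(x_k)-f^*] - \sigma^2 \mathcal{J} = \Tr\!\big(M\,\AQ^k E_0 (\AQ^k)^\top\big).
\end{equation*}
Because $\rho(\AQ)<1$ forces $\AQ^k \to 0$, this already establishes that the limit in \eqref{h_2_stochastic} exists and equals $\sigma^2\mathcal{J}$.

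It remains to produce the quantitative bound. I would estimate the trace by duality of the spectral and nuclear norms, $|\Tr(M \AQ^k E_0 (\AQ^k)^\top)| \le \|M\|\,\|E_0\|_*\,\|\AQ^k\|^2$, so that $\psi_0 \triangleq \|M\|\,\|E_0\|_*$ is explicit and depends on the initialization only through $X_0 = \E[\tx_0\tx_0^\top]$. The key step is converting $\|\AQ^k\|^2$ into the claimed form: I would invoke Gelfand's formula and set $\varepsilon_k \triangleq \|\AQ^k\|^{1/k} - \rho(\AQ)$; since $\rho(\AQ)^k = \rho(\AQ^k) \le \|\AQ^k\|$ we have $\varepsilon_k \ge 0$, and Gelfand's formula gives $\varepsilon_k \to 0$, so that $\|\AQ^k\|^2 = (\rho(\AQ)+\varepsilon_k)^{2k}$ exactly. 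Finally, when $\AQ$ is symmetric, $\AQ^k$ is symmetric with eigenvalues equal to the $k$-th powers of those of $\AQ$, whence $\|\AQ^k\| = \rho(\AQ)^k$ and therefore $\varepsilon_k = 0$ for every $k$.

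I expect the main obstacle to be exactly this last rate-extraction step. For a non-normal $\AQ$ the powers $\AQ^k$ can exhibit transient growth, so one cannot simply write $\|\AQ^k\| = \rho(\AQ)^k$; the Gelfand-based definition of $\varepsilon_k$ is what absorbs this non-normality while still vanishing in the limit, and it is what forces the rate exponent to be $\rho(\AQ)+\varepsilon_k$ rather than $\rho(\AQ)$ itself. A secondary point requiring care is the vanishing of the cross terms in the covariance recursion, which relies precisely on the independence clause in Assumption~\ref{asmp1} rather than on mere uncorrelatedness.
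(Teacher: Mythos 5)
Your proof is correct and follows essentially the same route as the paper's: the same covariance recursion $V_{k+1} = \AQ V_k \AQ^\top + \sigma^2 BB^\top$ obtained from the independence clause of Assumption~\ref{asmp1}, the same subtraction of the Lyapunov fixed point giving $V_k - \sigma^2 X = \AQ^k (V_0 - \sigma^2 X)(\AQ^\top)^k$, and the same Gelfand argument with $\varepsilon_k = 0$ in the symmetric case. The only differences are cosmetic: you bound the trace via spectral--nuclear norm duality with $\psi_0 = \|M\|\,\|E_0\|_*$ where the paper invokes Von Neumann's trace inequality to get $m\,\|V_0 - \sigma^2 X\|\,\|\tiR\|^2\,\|\AQ^k\|^2$ (your constant is if anything slightly tighter), and you make the Gelfand sequence explicit as $\varepsilon_k = \|\AQ^k\|^{1/k} - \rho(\AQ)$ for $k \geq 1$, which is valid since $\rho(\AQ)^k = \rho(\AQ^k) \leq \|\AQ^k\|$ (the case $k=0$ is trivial because the exponent vanishes).
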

\begin{proof}
Using \eqref{quad_func_err_3} and 
$\mathcal{J} = \Tr(\tiR X \tiR^\top)$, we obtain
\begin{equation}
\begin{split}
\E[f(x_k) - f^*] - &\sigma^2 H_2^2(A_Q, B, \tilde{R}) = \E[(\tilde{R} \tx_k)^\top (\tilde{R} \tx_k)]- \sigma^2 \Tr(\tilde{R} X \tilde{R}^\top)\\
&= \Tr(\tilde{R} \E[\tx_k \tx_k^\top] \tilde{R}^\top) - \sigma^2 \Tr(\tilde{R} X \tilde{R}^\top) = \Tr(\tilde{R}(V_k - \sigma^2 X)\tilde{R}^\top)
\end{split}
\end{equation}
where 
$V_{k} \triangleq \E[\xi_k\xi_k^\top]$ for 
$k\geq 0$. 
It follows from \eqref{dyn_sys_quad2:main} that 
\begin{equation} \label{recursive_var}
\begin{split}
&V_k = \E[\tx_k\tx_k^\top] = \E[(A_Q \tx_{k-1} + B w_{k-1})(A_Q \tx_{k-1} + B w_{k-1})^\top] \\
&= A_Q V_{k-1} A_Q^\top + \sigma^2 BB^\top
\end{split}
\end{equation}
holds for all $k\geq 1$, where in the last equality we used the fact that the random vector $w_{k-1}$ is zero-mean, independent of $\xi_{k-1}$, and has covariance matrix $\E[w_{k-1} w_{k-1}^T] = \sigma^2\id$. Moreover, by \eqref{lyapunov_AQ} we have
$X = A_Q X A_Q^T + BB^T$; hence, 
subtracting $\sigma^2 X$ from both sides of \eqref{recursive_var},
we obtain
\begin{equation}\label{rec_Vk}
V_k - \sigma^2 X = A_Q (V_{k-1}-\sigma^2 X) A_Q^\top = A_Q^k (V_0-\sigma^2 X) (A_Q^\top)^k
\end{equation}
where the last equality comes from recursively using the first equality. This implies
{
\begin{align}
|\Tr(\tilde{R}(V_k - \sigma^2 X)\tilde{R}^\top)| &= |\Tr(\tiR A_Q^k (V_0-\sigma^2 X) (A_Q^\top)^k \tiR^\top)|\nonumber\\
&=|\Tr((V_0-\sigma^2 X)(\tiR A_Q^k)^\top(\tiR A_Q^k))|\nonumber\\
& \leq m \|V_0-\sigma^2 X\| \|\tiR A_Q^k\|^2 \leq m \|V_0-\sigma^2 X\| \|\tiR\|^2 \|A_Q^k\|^2, \label{error-bound}
\end{align}}%
where $\|.\|$ is the 
spectral norm, and 
the first inequality in \eqref{error-bound} follows from the Von Neumann's trace inequality which states that for any two $m \times m$ matrices $U$ and $V$ with singular values $||U\|_2=u_1 \geq ... \geq u_m$ and $||V\|_2=v_1 \geq ... \geq v_m$, respectively, we have$|\Tr(UV)| \leq \sum_{i=1}^m u_i v_i$.
Finally, it follows from the Gelfand's formula that there exists a sequence of non-negative numbers $\{\varepsilon_k\}_k$ such that for every $k \geq 0$,
     $\|A_Q^k\|_2 \leq \left(\rho(A_Q) + \varepsilon_k\right)^k$ and $\lim_k \varepsilon_k = 0$. Note that when $A_Q$ is symmetric, we have $\| A_Q^k\|_2 = \rho(A_Q)^k$ so that we can choose $\varepsilon_k = 0$. Inserting this bound into \eqref{error-bound}, we obtain the desired result.
\end{proof}
It is worth noting that for a strongly convex quadratic function in the form of $f(x) = \tfrac{1}{2} x^\top Q x - p^\top x + r$, a similar line of argument as in Lemma \ref{H_2_quad_equi} shows that 
$\mathcal{J}'$ in \eqref{eq:robust_def_alternative} is in fact equal to $\lim_{k\to \infty}\frac{1}{\sigma^2}\E[\norm{x_k-x^*}^2]=H_2^2(A_Q,B,T)$.
Next we focus on the GD and AG algorithms, discuss the dependence of their convergence rate and robustness on the parameters (stepsize $\alpha$ and momentum $\beta$) and show how to formulate an optimization problem that systematically trades off convergence rate and robustness.
\subsection{Gradient descent (GD) method}
The dynamical system representation of GD, choosing the $A, B, C$ as in
\eqref{gradient_ABCD} yields 
	$$ \E[\xi_{k+1}] = A_Q \E[\xi_k] , \quad \mbox{with} \quad A_Q = \id - \alpha Q.$$  
As shown in Lemma \ref{H_2_quad_equi}, the convergence rate of GD is given by $\rho(A_Q)^2$. For GD, we will suppress the dependence of $\rho(A_Q)$ on $A_Q$ and use the notation $\rho(\alpha)$ to highlight the effect of the stepsize $\al$. Since $A_Q$ is symmetric, $\rho(\alpha)$ can be computed as
\begin{equation}\label{eq-rate}
\rho(\al) = \rho(A_Q) = \|A_Q\| = \max\{|1-\alpha \mu|, |1 - \alpha L|\}.
\end{equation}
$\alpha \in (0,2/L)$ is a necessary condition for global linear convergence; otherwise, $\rho(\al) \geq 1$. In particular, it is well-known that the fastest rate is achieved for the stepsize
{
  \beq \bar{\alpha} \triangleq \arg \min_{\alpha \geq 0} \rho(A_Q) = \frac{2}{\mu+L},
  	\label{eqn-fast-stepsize}
  \eeq
}%
which leads to a convergence rate of $\bar{\rho} = 1 - \frac{2}{\kappa + 1}$. The choice of the stepsize not only affects the 
rate (see~\eqref{eq-rate}) 
but also the robustness of the GD algorithm to gradient noise. The following proposition provides an analytical characterization of the robustness $\mathcal{J}$ of the GD method as a function of the stepsize, which we denote by $\mathcal{J}(\alpha)$ to highlight its dependence on $\alpha$. 

\begin{proposition} \label{quad_GD_thm}
Let $f$ be a quadratic function of the form $f(x) = \tfrac{1}{2} x^\top Q x - p^\top x + r$. Consider the GD iterations given by \eqref{gradient_update} with constant stepsize $\alpha \in (0,2/L)$. Then the robustness of the GD method is given by
{
\begin{align}
\mathcal{J}(\alpha) &=  \sum_{i=1}^d\frac{\al^2 \lambda_i }{2(1-(1-\alpha \lambda_i)^2)} = \al \sum_{i=1}^d\frac{1}{2(2-\al \lambda_i)} \label{GD_quad_main:b},
\end{align}}%
where $0<\mu = \lambda_1 \leq \lambda_2 \leq ... \lambda_d=L$ are the eigenvalues of $Q$.
\end{proposition}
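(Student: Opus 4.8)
The plan is to invoke the $H_2$-norm characterization established in Lemma~\ref{H_2_quad_equi}, namely $\mathcal{J} = \Tr(\tiR X \tiR^\top)$ where $X$ solves the discrete Lyapunov equation \eqref{lyapunov_AQ}, and then to exploit the fact that for GD every matrix appearing in that equation is a function of $Q$, so the whole problem diagonalizes. Concretely, for GD we have $A_Q = \id - \alpha Q$ and $B = -\alpha\id$, hence $BB^\top = \alpha^2\id$; moreover $T = \id$ gives $\tiR = R$ with $\tfrac{1}{2}Q = R^\top R$, so that $\tiR^\top\tiR = \tfrac{1}{2}Q$. The constraint $\alpha\in(0,2/L)$ is precisely what ensures $\rho(A_Q) = \max\{|1-\alpha\mu|,|1-\alpha L|\}<1$, so Lemma~\ref{H_2_quad_equi} applies and $X$ is well defined.

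First I would diagonalize $Q = U\Lambda U^\top$ with $\Lambda = \diag(\lambda_1,\dots,\lambda_d)$ and $U$ orthogonal. Since $A_Q = U(\id-\alpha\Lambda)U^\top$ and $BB^\top = \alpha^2\id$ are simultaneously diagonalized by $U$, substituting $X = U\hat{X}U^\top$ into \eqref{lyapunov_AQ} turns it into $(\id-\alpha\Lambda)\hat{X}(\id-\alpha\Lambda) - \hat{X} + \alpha^2\id = 0$. Because $\id-\alpha\Lambda$ is diagonal, this decouples entrywise: the $(i,j)$ entry reads $[(1-\alpha\lambda_i)(1-\alpha\lambda_j)-1]\hat{X}_{ij} + \alpha^2\delta_{ij} = 0$. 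For $i\neq j$ the bracket is nonzero (each factor has absolute value strictly below one), forcing $\hat{X}_{ij}=0$; for $i=j$ we solve $\hat{X}_{ii} = \alpha^2/\big(1-(1-\alpha\lambda_i)^2\big)$, which is positive exactly because $|1-\alpha\lambda_i|<1$. Equivalently one may read this off directly from the series formula \eqref{soln-Lyapunov-eqn}, which in the eigenbasis is a convergent geometric sum with ratio $(1-\alpha\lambda_i)^2$.

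Next I would evaluate the trace using cyclicity together with $\tiR^\top\tiR = \tfrac{1}{2}Q$:
\begin{equation*}
\mathcal{J} = \Tr(\tiR X \tiR^\top) = \Tr\!\big(\tfrac{1}{2}Q\, X\big) = \tfrac{1}{2}\Tr(\Lambda\hat{X}) = \tfrac{1}{2}\sum_{i=1}^d \lambda_i\,\hat{X}_{ii} = \sum_{i=1}^d \frac{\alpha^2\lambda_i}{2\big(1-(1-\alpha\lambda_i)^2\big)},
\end{equation*}
which is the first expression in \eqref{GD_quad_main:b}. The second expression then follows from the elementary factorization $1-(1-\alpha\lambda_i)^2 = \alpha\lambda_i(2-\alpha\lambda_i)$, which cancels an $\alpha\lambda_i$ and yields $\alpha/\big(2(2-\alpha\lambda_i)\big)$.

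Since all relevant matrices commute with $Q$, there is essentially no analytic obstacle here; the only points requiring care are the reduction of the matrix Lyapunov equation to the decoupled scalar equations (justifying that $\hat{X}$ is diagonal) and verifying that $\alpha\in(0,2/L)$ is exactly the range in which each denominator stays positive, equivalently the range in which the underlying stationary variance is finite. Everything else is a direct trace computation and the one-line algebraic simplification above.
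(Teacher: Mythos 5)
Your proposal is correct and follows essentially the same route as the paper's proof: diagonalize $Q=U\Lambda U^\top$, reduce the Lyapunov equation \eqref{lyapunov_AQ} to its decoupled diagonal form, solve for $\hat{X}_{ii}=\alpha^2/(1-(1-\alpha\lambda_i)^2)$, and evaluate the trace, with the factorization $1-(1-\alpha\lambda_i)^2=\alpha\lambda_i(2-\alpha\lambda_i)$ yielding the second expression. Your treatment is in fact slightly more careful than the paper's in two spots: you explicitly verify that the off-diagonal entries of $\hat{X}$ vanish (the paper merely asserts the diagonal solution is ``easy to verify''), and you shortcut the invariance argument via trace cyclicity, $\Tr(\tiR X \tiR^\top)=\Tr(\tfrac{1}{2}QX)$, rather than conjugating the whole problem as the paper does.
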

\begin{proof}
We first show that without loss of generality we can assume $Q$ is a diagonal matrix. Let $Q = U\Lambda U^\top$ be the eigenvalue decomposition of $Q$ where $U$ is a unitary matrix and $\Lambda = \diag(\lambda_1,...,\lambda_d)$ is a diagonal matrix containing the eigenvalues of $Q$. 
Multiplying $A_Q$ by $U^\top$ and $U$ from left and right leads to
\begin{equation} \label{UTAQU}
U^\top A_Q U = U^\top (\id - \alpha Q) U = \id - \alpha \Lambda = A_\Lambda,
\end{equation}
where $A_\Lambda \triangleq \id - \alpha \Lambda$ is a diagonal matrix. Similarly, we multiply the Lyapunov equation \eqref{lyapunov_AQ} from left and right by $U^\top$ and $U$, which yields
$U^\top A_Q X A_Q^\top U - U^\top X U + \alpha^2\id = 0$,
where we have used the fact that $B=-\alpha\id$ for the dynamical system representation of the GD method (see \eqref{gradient_ABCD}). It follows from \eqref{UTAQU} that $A_Q = U(\id - \alpha \Lambda) U^\top$, which when plugged into the Lyapunov equation above, yields
$(\id - \alpha \Lambda) U^\top X U (\id - \alpha \Lambda) - U^\top X U + \alpha^2 \id = 0$.
This means that the matrix $U^\top X U$ solves the Lyapunov equation obtained by replacing $A_Q$ by $A_\Lambda$ in \eqref{lyapunov_AQ}. Furthermore, the Cholesky decomposition of $\frac{1}{2}\Lambda$ is equal to $\sqrt{\frac{1}{2}}\Lambda^{1/2}$; thus, the robustness $\mathcal{J}$, 
corresponding to $H_2^2(A_\Lambda, B, \sqrt{\frac{1}{2}}\Lambda^{1/2}T)$, 
is equal to
{
\begin{equation*}
\frac{1}{2}\Tr(\Lambda^{1/2}T U^\top X U (\Lambda^{1/2}T)^\top) = \frac{1}{2}\Tr(\Lambda^{1/2}U^\top X U\Lambda^{1/2}) = \Tr(\tiR X \tiR^\top),
\end{equation*}}%
where we used $T= \id$ for GD for the first equality and the fact that the Cholesky decomposition of $\frac{1}{2}Q$ is $(\sqrt{\frac{1}{2}} \Lambda^{1/2} U^\top)^\top (\sqrt{\frac{1}{2}} \Lambda^{1/2} U^\top)$ to obtain the second equality. Therefore, robustness $\mathcal{J}$ would be invariant if we were to replace $Q$ by $\Lambda$ and solve the Lyapunov equation \eqref{lyapunov_AQ} for $(A_\Lambda, B)$ instead of $(A_Q,B)$. With this replacement, it is easy to verify that the solution of the Lyapunov equation is 
$X_\Lambda = \diag\left(\frac{\alpha^2}{1-(1-\alpha \lambda_1)^2},...,\frac{\alpha^2}{1-(1-\alpha \lambda_d)^2})\right)$
as $A_\Lambda$ and $B$ are both diagonal. Plugging this solution into $\frac{1}{2} \Tr(\Lambda^{1/2} X_\Lambda \Lambda^{1/2})$ implies
$\mathcal{J}(\al) = \sum_{i=1}^d\frac{\al^2 \lambda_i }{2(1-(1-\alpha \lambda_i)^2)}= \al \sum_{i=1}^d\frac{1}{2(2-\al \lambda_i)}$
which completes the proof.
\end{proof}
\begin{rmk}
Proposition \ref{quad_GD_thm} also shows that the robustness $\mathcal{J}(\al)$ for the GD method is an increasing function of $\alpha$. This means choosing a smaller stepsize leads to GD being more robust which has been previously observed in the literature for both additive and multiplicative deterministic noise \cite{lessard2016analysis,fazlyab2017analysis}.
\end{rmk}
Having explicit expressions for both convergence rate and robustness for GD (see \eqref{eq-rate} and \eqref{GD_quad_main:b}), given an allowable deviation $\epsilon>0$ from the optimal convergence rate $\bar{\rho}=1 - \frac{2}{\kappa + 1}$, a natural approach to account for the trade-off between these two measures is to choose the stepsize $\alpha$ that results in the most robust algorithm satisfying the rate constraints, i.e., optimizing
{
\begin{equation}
\label{eq:H2-minimization}
 \min_{\alpha \in (0,2/L)} \mathcal{J}(\alpha)  \quad \mbox{subject to} \quad \rho(\alpha) \leq (1+\epsilon)\bar{\rho}.
\end{equation}}%
This problem is equivalent to the following convex problem for $\epsilon\in[0, \frac{2}{\kappa-1})$ (which ensures that the upper bound on the rate is less than one and the optimization problem \eqref{eq:H2-minimization} admits a solution):
{
\begin{equation}
\label{eq:H2-aux-problem}
\min_{\alpha \in (0,2/L)} \mathcal{J}(\alpha)  \quad \mbox{subject to} \quad \frac{1}{1-\rho^2(\alpha)} \leq \frac{1}{1-(1+\epsilon)^2\bar{\rho}^2}.
\end{equation}}%
Indeed, $1/(1-\rho^2)$ is a nondecreasing convex function for $\rho\in(0,1)$ and $\rho(\alpha)$ is convex in $\alpha$; therefore, both $1/(1-\rho(\alpha)^2)$ and  $\mathcal{J}(\alpha)$ in \eqref{GD_quad_main:b} are convex for $\alpha\in(0,\frac{2}{L})$ and is increasing in $\alpha$. Moreover, \eqref{eq:H2-aux-problem} satisfies the Slater condition. Thus, strong duality implies that there exists $\tau$ (which is a function of $\epsilon$) such that the above minimization problem is equivalent to the following unconstrained problem:
{
\begin{equation} \label{opt-pbm-gd}
\alpha_*(\tau)\triangleq \argmin_{\alpha \in (0,2/L)} F_\tau(\alpha) \triangleq \mathcal{J}(\alpha) + \tau \frac{1}{1-\rho^2(\alpha)}.
\end{equation}}%
The parameter $\tau>0$ determines the trade-off between rate and 
robustness. For small~$\tau$, the dominant term in the cost would be $\mathcal{J}(\alpha)$ so that we expect the optimal stepsize to be small since $\mathcal{J}(\al)$ is an increasing function of $\al$. On the other hand, for large enough $\tau$, the convergence rate is the dominant term in the cost; therefore, one would expect the optimal stepsize (that solves the problem \eqref{opt-pbm-gd}) to be close to $\bar{\alpha}$ which corresponds to the fastest achievable rate $\bar{\rho}$ (see \eqref{eqn-fast-stepsize}). In order to get more intuition about the effect of the choice of the stepsize parameter, we next give an illustrative example in dimension $d=2$ to show the behavior of the optimal $\alpha_*(\tau)$ as the tradeoff parameter $\tau$ is varied from zero to infinity. For computational tractability, we consider the unconstrained version of the problem given in \eqref{opt-pbm-gd}.\footnote{In Proposition \ref{prop-gd-optimality} of the appendix, we derive the first-order conditions for $\alpha_*(\tau)$ that allows it to be computed up to an arbitrary accuracy.}
\begin{ex}\label{example}
In dimension $d=2$, let $\tau=2$ and consider the parameters
{
   \beq\mu = \lambda_1 = 0.1 \quad \mbox{and} \quad L=\lambda_2 = 1 \quad \mbox{with} \quad \kappa =\frac{L}{\mu}=10. \label{params-mu-L}
   \eeq
}%
The first-order optimality conditions for \eqref{opt-pbm-gd} is derived in Proposition \ref{prop-gd-optimality} which is equivalent to a polynomial root finding problem in $\alpha$ for a polynomial of degree~$4$. The roots of polynomials can be found up to arbitrary accuracy by calculating the eigenvalues of the corresponding companion matrix \cite{edelman1995polynomial}, for instance using the \verb+roots+ function in {\sc Matlab}. After a careful examination of all the roots, we conclude that the optimal stepsize $\alpha_*$ that minimizes the cost $F_\tau(\alpha)$ is $\al_* \approx 1.5055$ which gives the rate $\rho(\alpha_*)\approx 0.8494$ and robustness $\mathcal{J}(\alpha_*) \approx 1.9294$. This point is marked on Figure~\ref{fig-GD} below which shows the robustness level as a function of the optimal convergence rate~$\rho$ when we change $\tau$ from zero (corresponds to the rightmost point in the curve) to infinity (corresponds to the uppermost point in the curve) for the parameters in \eqref{params-mu-L}. 
\begin{figure}[ht!]
  \centering
    \begin{minipage}[t]{0.3\textwidth}
    \includegraphics[width=\textwidth]{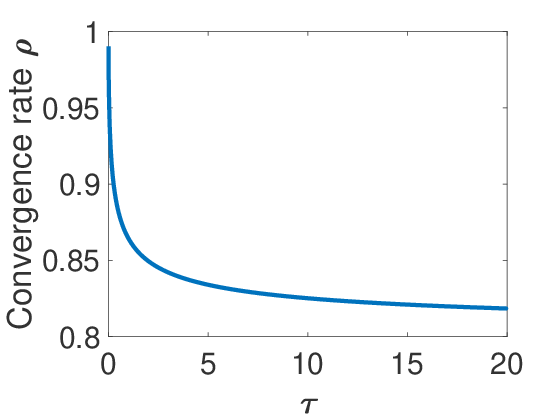}
  \end{minipage}
  \hfill
  \begin{minipage}[t]{0.3\textwidth}
    \includegraphics[width=\textwidth]{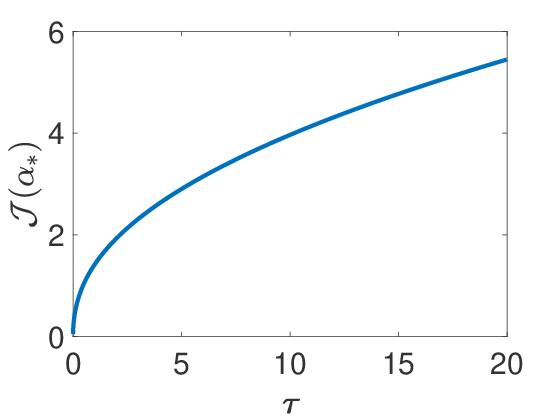}
  \end{minipage}
  \hfill
  \begin{minipage}[t]{0.3\textwidth}
    \includegraphics[width=\textwidth]{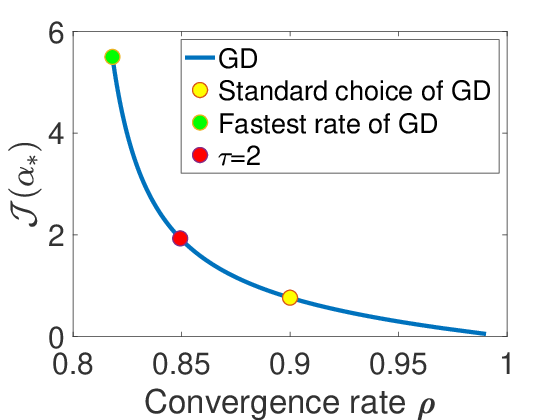}
  \end{minipage}
  \caption{\label{fig-GD} Left and Middle: The behaviors of the convergence rate $\rho$ and robustness $\mathcal{J}$ computed at the optimal stepsize $\al_*$, as a function of the trade-off parameter $\tau$. Right: The robustness level as a function of convergence rate again as $\tau$ varies from zero to infinity.}
\end{figure}

The left and the middle panels of Figure \ref{fig-GD} show the convergence rate and robustness corresponding to the optimal stepsize $\al_*$ as a function of the trade-off parameter~$\tau$. 
As $\tau$ goes to $0$, the robustness term is more dominant which requires a smaller stepsize; therefore, $\alpha_*$ goes to $0$ and $\rho(\alpha_*)$ thus goes to $1$. As $\tau$ becomes larger, convergence rate becomes more important, and the stepsize also becomes larger to ensure faster convergence. In particular, as $\tau$ goes to infinity, $\al_*$ goes to $\bar{\alpha}$ given in \eqref{eqn-fast-stepsize}
leading to the fastest rate, $\bar{\rho} = \frac{\kappa - 1}{\kappa + 1} \approx 0.8182$.

Finally, the rightmost panel of Figure \ref{fig-GD} illustrates the trade-off between the rate and robustness. We se that for small $\tau$, the optimal stepsize $\alpha_*$ is smaller which 
implies improved robustness but slower convergence.  As $\tau$ grows, we achieve faster rate at the expense of being less robust to the additive gradient noise. In addition, the points corresponding to the fastest rate, i.e., $\al=2/(\mu+L)$, and standard parameter choice $\al=1/L$ for GD has been marked on this trade-off curve.
\end{ex}
We see from Figure \ref{fig-GD} that smaller values of $\rho$ (or equivalently smaller values of $\frac{1}{1-\rho^2}$) are accompanied by larger values of $\mathcal{J}$. This suggests that the product $\mathcal{J} \frac{1}{1-\rho^2}$ cannot be too small for any choice of the stepsize $\alpha$. The next lemma shows that 
there are some fundamental limits (lower bounds) 
on how robust the GD can be.
\begin{proposition} \label{quad_GD_imposs}
Let $\rho(\al)$ and $\mathcal{J}(\al)$ be given by \eqref{eq-rate} and \eqref{GD_quad_main:b}, respectively. Then, the following inequality holds
$\mathcal{J}(\alpha) \geq \big(1-\rho^2(\alpha)\big)\sum_{i=1}^d \frac{1}{8\lambda_i}$ 
for any choice of the stepsize $\alpha > 0$. 
\end{proposition}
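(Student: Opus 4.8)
The plan is to prove the bound termwise. Both $\mathcal{J}(\alpha)$ and the right-hand side split as sums over the eigenvalues $\lambda_i$ of $Q$, so it suffices to establish, for each $i=1,\dots,d$,
\[
\frac{\alpha^2 \lambda_i}{2\big(1-(1-\alpha\lambda_i)^2\big)} \;\geq\; \big(1-\rho^2(\alpha)\big)\,\frac{1}{8\lambda_i},
\]
and then add these $d$ inequalities. First I would rewrite the left-hand term in the factored form already exploited in the proof of Proposition~\ref{quad_GD_thm}, namely $\frac{\alpha^2\lambda_i}{2(1-(1-\alpha\lambda_i)^2)} = \frac{\alpha}{2(2-\alpha\lambda_i)}$, using the factorization $1-(1-\alpha\lambda_i)^2 = \alpha\lambda_i(2-\alpha\lambda_i)$.

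The central observation is that the global convergence rate dominates each coordinate. Since $\rho(\alpha) = \max\{|1-\alpha\mu|,|1-\alpha L|\}$ by \eqref{eq-rate}, we have $\rho^2(\alpha) \geq (1-\alpha\lambda_i)^2$ for every $i$, hence $1-\rho^2(\alpha) \leq 1-(1-\alpha\lambda_i)^2 = \alpha\lambda_i(2-\alpha\lambda_i)$. This is exactly the direction needed for a lower bound: the factor multiplying $\tfrac{1}{8\lambda_i}$ on the right is controlled \emph{from above} by the per-coordinate quantity $\alpha\lambda_i(2-\alpha\lambda_i)$, so that $\big(1-\rho^2(\alpha)\big)\tfrac{1}{8\lambda_i} \leq \tfrac{\alpha(2-\alpha\lambda_i)}{8}$. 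It then remains to verify
\[
\frac{\alpha}{2(2-\alpha\lambda_i)} \;\geq\; \frac{\alpha(2-\alpha\lambda_i)}{8}.
\]
For $\alpha\in(0,2/L)$ one has $\alpha\lambda_i\in(0,2)$, so $2-\alpha\lambda_i>0$, and after clearing the positive denominators this reduces to the elementary estimate $(2-\alpha\lambda_i)^2 \leq 4$, which holds since $2-\alpha\lambda_i\in(0,2)$. Summing the resulting per-coordinate inequalities over $i$ gives $\mathcal{J}(\alpha)\geq \big(1-\rho^2(\alpha)\big)\sum_{i=1}^d\tfrac{1}{8\lambda_i}$.

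The step I expect to matter most is getting the direction of the spectral-radius comparison right: because the target is a \emph{lower} bound on $\mathcal{J}$, one must upper-bound $1-\rho^2(\alpha)$ by each individual $1-(1-\alpha\lambda_i)^2$ rather than the reverse, and the trivial-looking inequality $(2-\alpha\lambda_i)^2\leq 4$ is precisely what closes each term. This last estimate is also where the convergent-stepsize regime enters, and it is the only place any genuine constraint on $\alpha$ is used; for $\alpha\notin(0,2/L)$ one has $\rho(\alpha)\geq 1$, the right-hand side becomes nonpositive, and the inequality carries content only for stepsizes that actually produce linear convergence.
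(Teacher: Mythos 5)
Your proof is correct and is essentially the paper's own argument rearranged termwise: both rest on exactly the same two ingredients, the per-eigenvalue spectral-radius comparison $\rho(\alpha)\geq|1-\alpha\lambda_i|$ (equivalently $1-\rho^2(\alpha)\leq 1-(1-\alpha\lambda_i)^2=\alpha\lambda_i(2-\alpha\lambda_i)$) and the elementary bound $(2-\alpha\lambda_i)^2\leq 4$. The paper implements this by multiplying the inequality $\tfrac{1}{1-\rho^2(\alpha)}\geq\tfrac{1}{1-(1-\alpha\lambda_i)^2}$ by each summand of $\mathcal{J}(\alpha)$ and bounding the resulting sum from below, whereas you verify the per-coordinate inequality directly and then sum --- the same computation in different bookkeeping, and, like the paper's proof, effectively confined to $\alpha\in(0,2/L)$, where the formula \eqref{GD_quad_main:b} and the sign manipulations are meaningful.
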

\begin{proof}
It follows from \eqref{eq-rate} that for every $i\in \{1,...,d\}$, we have $\rho(\al) \geq |1-\alpha \lambda_i|$. This implies that
$\tfrac{1}{1-\rho(\al)^2} \geq \tfrac{1}{1-(1-\alpha \lambda_i)^2}$.
Multiplying both sides by $\frac{\alpha^2 \lambda_i}{2(1-(1-\alpha \lambda_i)^2)}$ and summing over all $i$ yields
$\tfrac{1}{1-\rho^2} \sum_{i=1}^d \tfrac{\alpha^2 \lambda_i}{2(1-(1-\alpha \lambda_i)^2)} \geq \sum_{i=1}^d \tfrac{\alpha^2 \lambda_i}{2(1-(1-\alpha \lambda_i)^2)^2}$.
Given the explicit characterization of $\mathcal{J}(\al)$ in Proposition \ref{quad_GD_thm} (see \eqref{GD_quad_main:b}) we obtain
{
\begin{equation} \label{imposs_bound_1}
\frac{1}{1-\rho^2} \mathcal{J}(\al) \geq \frac{1}{2} \sum_{i=1}^d \frac{\alpha^2 \lambda_i}{(1-(1-\alpha \lambda_i)^2)^2}.
\end{equation}}%
The right hand side of \eqref{imposs_bound_1} admits a lower bound as follows:
{
\begin{equation} \label{imposs_bound_2}
\frac{1}{2} \sum_{i=1}^d \frac{\alpha^2 \lambda_i}{(1-(1-\alpha \lambda_i)^2)^2} = \frac{1}{2} \sum_{i=1}^d \frac{\alpha^2 \lambda_i}{(\alpha \lambda_i (2-\alpha \lambda_i))^2} = \frac{1}{2} \sum_{i=1}^d \frac{1}{\lambda_i (2-\alpha \lambda_i)^2} \geq \sum_{i=1}^d \frac{1}{8\lambda_i},
\end{equation}}%
where the last inequality follows from the fact that $|2-\alpha \lambda_i| \leq 2$. Using the lower bound \eqref{imposs_bound_2} along with \eqref{imposs_bound_1} completes the proof.
\end{proof}
\subsection{Accelerated gradient (AG) method}
The dynamical system representation of AG, given $A,B,C$ in \eqref{nest_ABCD} leads to
{
\begin{equation} \label{A_Q_nest}
A_Q = \begin{bmatrix} 
	(1+\be)(\id-\alpha Q) & -\be(\id - \alpha Q) \\ 
    \id 	  &  0_d
\end{bmatrix}.
\end{equation}}%
We will first formulate an analogous problem to \eqref{opt-pbm-gd} for the AG method to design the parameters $(\alpha,\beta)$ in a way to find a trade-off between the rate and the robustness. Because AG has the pair $(\alpha,\beta)$ as design parameters, the analogue of \eqref{opt-pbm-gd} is
{
\begin{equation} \label{opt-pbm-ag}
\left(\alpha_*, \beta_*\right) \triangleq \arg\min_{(\al,\beta) \in \mathcal{S}} F_\tau(\alpha,\beta) \triangleq \mathcal{J}(\alpha,\beta) + \tau \frac{1}{1-\rho(\alpha,\beta)^2}
\end{equation}}%
where $\mathcal{J}(\alpha,\beta)$ is the robustness to the noise for the system 
\eqref{dyn_sys_quad2:main}, $\rho(\alpha,\beta)$ is the convergence rate of AG with parameters $(\alpha,\beta)$ and $\mathcal{S}$ is the set of all possible choices of the tuple $(\alpha,\beta)$ so that the AG iterations are globally convergent, i.e., 
\begin{equation}\label{stability_def}
\mathcal{S} = \left\{ (\al,\be) ~:~ \rho(A_Q) < 1, \al\geq 0, \be\geq 0 \right\} \subset \R^2.
\end{equation}
We call 
$\mathcal{S}$, the \emph{stability region} of $AG$, in analogy with the stability region of numerical methods that arise in the discretization of continuous-time differential equations. 

We next provide an explicit characterization for the convergence rate and robustness of AG for any given parameters $(\alpha,\beta) \in \mathcal{S}$. The convergence rate $\rho$ of the AG method as a function of $\alpha$ and $\beta$ is well-known. Diagonalizing the $A_Q$ matrix using the eigenvalue decomposition of $Q$, it can be shown after some computations that the rate $\rho=\rho(\alpha,\beta)$ admits the following formula
\begin{equation} \label{rho_nest}
\rho(\alpha,\beta) = \rho(A_Q) = \max\{\rho_\mu(\alpha,\beta), \rho_L(\alpha,\beta)\}
\end{equation}
where $A_Q$ is defined by \eqref{A_Q_nest} and $\rho_\lambda$ is defined for $\lambda\in\{\mu, L\}$ as follows:
{
\begin{equation}\label{def-delta-lambda}
\rho_\lambda(\alpha,\beta) = \begin{cases}
	\frac{1}{2}|(1+\be)(1-\al\lambda)| + \frac{1}{2}\sqrt{\Delta_\lambda} & \mbox{if } \Delta_\lambda \geq 0 \\
    \sqrt{\be(1-\al\lambda)} & \mbox{otherwise} 
\end{cases},\quad \Delta_\lambda = (1+\be)^2 (1-\al\lambda)^2 - 4\be(1-\al\lambda)
\end{equation}}%
(see e.g. \cite[Appendix A]{lessard2016analysis}, \cite[Section 4.3]{candes-restart-acc-grad}). The explicit expression \eqref{rho_nest} for the rate allows us to characterize the set $\mathcal{S}$ in the next proposition whose proof can be found in the appendix. We illustrate the set $\calS$ in Figure \ref{fig-stability-23} for different choices of the parameters $\mu$ and $L$.\footnote{We note that the stability region of a second-order difference equation that arises in accelerated algorithms that are sublinearly convergent for weakly convex quadratic functions has been studied in \cite{flammarion2015averaging}, however these results do not apply to the set $\mathcal{S}$ as we do not require the rate to be accelerated (we consider not only accelerated rates but also any rate $\rho$ less than one) and we consider strongly convex functions instead of weakly convex functions.}
\begin{figure}[ht!]
  \centering
\begin{minipage}[b]{0.36\textwidth}
    \includegraphics[width=\textwidth]{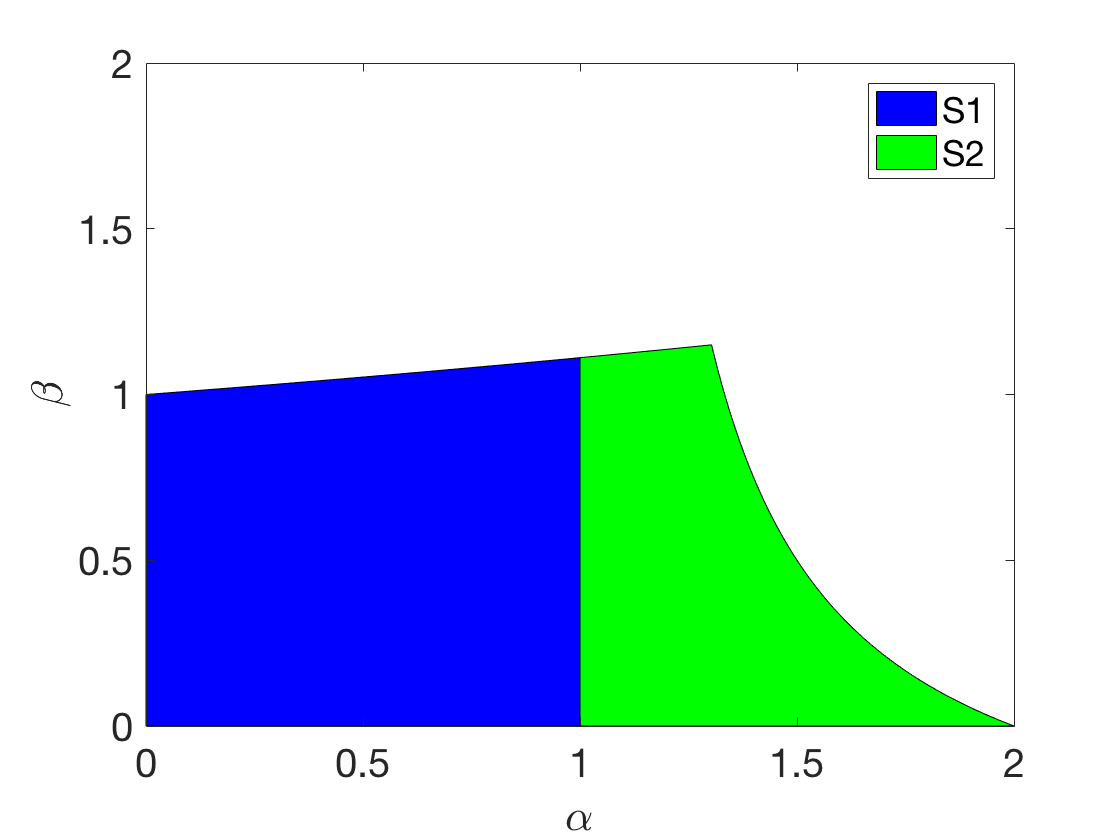}
\end{minipage}
\qquad\qquad
\begin{minipage}[b]{0.36\textwidth}
 \includegraphics[width=\textwidth]{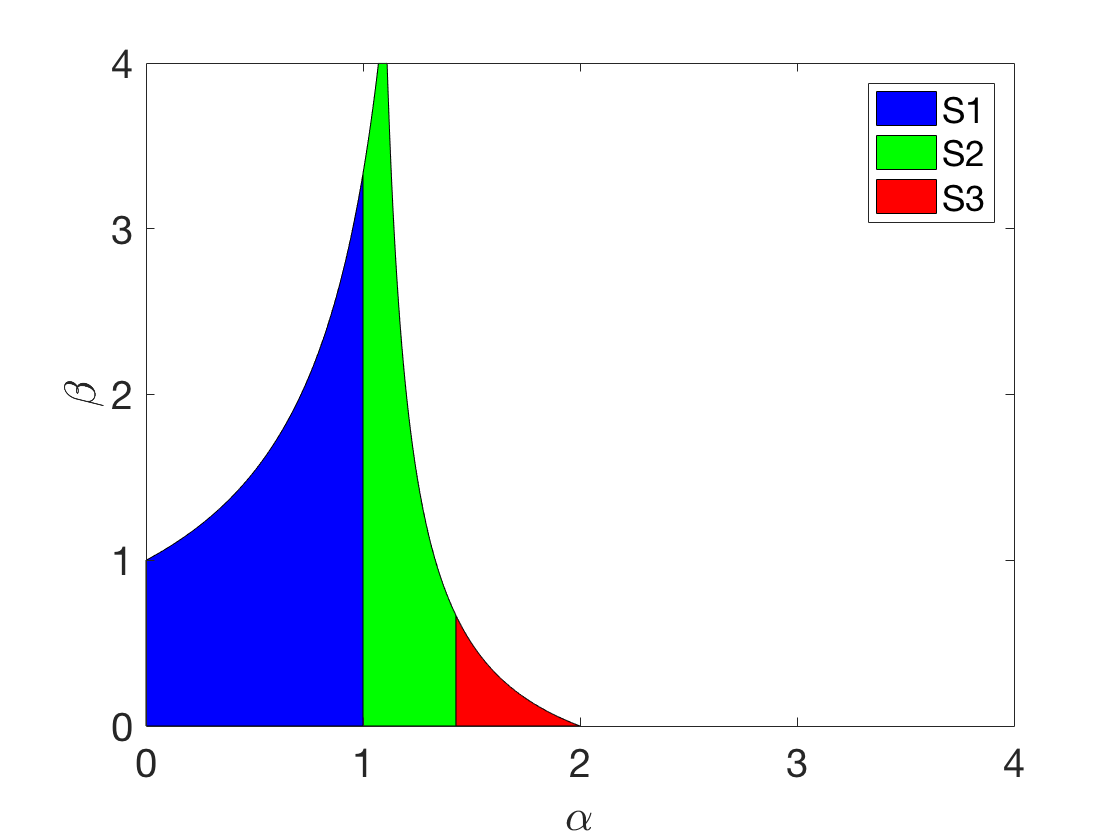}
  \end{minipage}
  \caption{\label{fig-stability-23} Left: The stability region $\calS = \calS_1 \cup\calS_2\cup\calS_3 $ with parameters $\mu=0.7$ and $L=1$. Right: The stability region $\calS = \calS_1 \cup\calS_2$ with parameters $\mu=0.1$ and $L=1$.}
\end{figure}
\begin{proposition}\label{prop-stability-set-ag}
Let $\mathcal{S}$ be the stability set of Nesterov's accelerated method defined by \eqref{stability_def}. Then its closure is given by the union of the following three sets:
{
\begin{equation} \label{nest_stability}
\begin{split}
& \mathcal{S}_1 := \left\{(\alpha, \beta): 0 \leq \alpha \leq \frac{1}{L},\  0 \leq \beta(1-\alpha \mu) \leq 1 \right\}, \\
& \mathcal{S}_2 :=  \left\{(\alpha, \beta): \frac{1}{L} < \alpha \leq \min\left\{\frac{2}{L}, \frac{1}{\mu}\right\}, \alpha L-1 \leq \frac{1}{2\beta+1}, \beta(1-\alpha \mu) \leq 1 \right\},\\
& \mathcal{S}_3 :=   \left\{(\alpha, \beta): \frac{1}{\mu} \leq \alpha \leq \frac{2}{L}, \alpha L-1 \leq \frac{1}{2\beta+1} \right\},\\
\end{split}
\end{equation}
}
with the convention that $\mathcal{S}_3$ is the empty set if $\mu < \frac{L}{2}$.
\end{proposition}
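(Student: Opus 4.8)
The plan is to exploit the block structure of $A_Q$ and reduce the spectral-radius condition to a pair of scalar polynomial stability tests, one for $\lambda=\mu$ and one for $\lambda=L$. First I would diagonalize $Q=U\Lambda U^\top$ and conjugate $A_Q$ in \eqref{A_Q_nest} by $U\oplus U$; after a permutation of coordinates this block-diagonalizes $A_Q$ into $d$ companion-type $2\times2$ blocks $A_{\lambda_i}=\left[\begin{smallmatrix}(1+\beta)(1-\alpha\lambda_i) & -\beta(1-\alpha\lambda_i)\\ 1 & 0\end{smallmatrix}\right]$, so that $\rho(A_Q)=\max_i \rho(A_{\lambda_i})$. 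Invoking the rate formula \eqref{rho_nest}, this maximum is attained at the extreme eigenvalues, i.e.\ $\rho(A_Q)=\max\{\rho_\mu,\rho_L\}$; hence $(\alpha,\beta)\in\mathcal{S}$ if and only if both $\rho_\mu<1$ and $\rho_L<1$. It therefore suffices to characterize, for a single $\lambda$, when the two roots of the block's characteristic polynomial lie strictly inside the unit disc.

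For a fixed $\lambda$ the characteristic polynomial is $p_\lambda(x)=x^2-(1+\beta)(1-\alpha\lambda)\,x+\beta(1-\alpha\lambda)$. I would apply the Schur--Cohn (Jury) test for a quadratic, whose three conditions are $p_\lambda(1)>0$, $p_\lambda(-1)>0$, and $|p_\lambda(0)|<1$ (the product-of-roots condition). A short computation collapses these to remarkably clean inequalities: $p_\lambda(1)=\alpha\lambda$, $p_\lambda(-1)=1+(1+2\beta)(1-\alpha\lambda)$, and $p_\lambda(0)=\beta(1-\alpha\lambda)$. Equivalently, the single-mode stability conditions are (i) $\alpha>0$, (ii) $\alpha\lambda-1<\tfrac{1}{2\beta+1}$, and (iii) $|\beta(1-\alpha\lambda)|<1$.

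Intersecting these over $\lambda\in\{\mu,L\}$ gives the open region $\mathcal{S}$, after which I would pass to closures by relaxing each strict inequality to a non-strict one. Condition (i) is just $\alpha>0$; since $\mu\le L$, condition (ii) binds at $\lambda=L$, giving $\alpha L-1\le\tfrac{1}{2\beta+1}$; condition (iii) must hold for both modes. To resolve the absolute value in (iii) I would split the $\alpha$-axis according to the signs of $1-\alpha\mu$ and $1-\alpha L$, which produces exactly three regimes: $\alpha\le 1/L$ (both factors $\ge0$), $1/L<\alpha\le 1/\mu$ (only $1-\alpha L<0$), and $\alpha\ge1/\mu$ (both $\le0$). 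In the first regime (ii) is vacuous and the binding form of (iii) is $\beta(1-\alpha\mu)\le 1$, yielding $\mathcal{S}_1$. In the latter two regimes, wherever $1-\alpha L<0$ the inequality (ii) already forces $\beta(\alpha L-1)<\tfrac{\beta}{2\beta+1}<1$, so the $L$-part of (iii) is subsumed by (ii), and the surviving constraints are precisely those defining $\mathcal{S}_2$ and $\mathcal{S}_3$. The upper bound $\alpha\le 2/L$ appears because (ii) evaluated at $\beta=0$ reads $\alpha L-1\le 1$; comparing $1/\mu$ with $2/L$ then produces both the $\min\{2/L,1/\mu\}$ cutoff in $\mathcal{S}_2$ and the emptiness of $\mathcal{S}_3$ when $1/\mu>2/L$, i.e.\ $\mu<L/2$.

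I expect the main obstacle to be the careful bookkeeping of the sign cases and of the open-versus-closed boundaries in the last step, together with justifying rigorously the reduction to the two extreme eigenvalues. The latter rests on the monotonicity of $\rho_\lambda$ in $\lambda$ implicit in \eqref{rho_nest}: one must ensure that stability at $\lambda=\mu$ and $\lambda=L$ indeed certifies stability for every intermediate $\lambda_i\in[\mu,L]$. Granting \eqref{rho_nest} this is immediate; otherwise one would have to verify that the unstable single-mode parameter set is monotone in $\lambda$, which is the only genuinely delicate point.
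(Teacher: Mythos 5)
Your proposal is correct, and while it shares the paper's overall skeleton, it replaces the paper's central computation with a genuinely different and cleaner device. Both arguments start from the rate formula \eqref{rho_nest} to reduce the condition $\rho(A_Q)\le 1$ to the two extreme eigenvalues, and both finish by intersecting the per-eigenvalue regions over the same three $\alpha$-regimes ($\alpha\le 1/L$, $1/L<\alpha\le\min\{2/L,1/\mu\}$, $\alpha\ge 1/\mu$). Where you diverge is in the characterization of the single-mode region $\{\rho_\lambda\le 1\}$: the paper works directly with the explicit expression for $\rho_\lambda$ in \eqref{def-delta-lambda}, splitting into the cases $\Delta_\lambda\le 0$ and $\Delta_\lambda>0$, and within the latter on the sign of $1-\alpha\lambda$, which forces it to manipulate inequalities of the form $\sqrt{\Delta_\lambda}\le 2-(1+\beta)c_\lambda$, square both sides, and track the side condition $2-(1+\beta)c_\lambda\ge 0$. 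You instead apply the Schur--Cohn/Jury criterion to the characteristic polynomial $p_\lambda(x)=x^2-(1+\beta)(1-\alpha\lambda)x+\beta(1-\alpha\lambda)$, and the three conditions collapse in one shot to $p_\lambda(1)=\alpha\lambda>0$, $p_\lambda(-1)=1+(1+2\beta)(1-\alpha\lambda)>0$ (i.e.\ $\alpha\lambda-1<\tfrac{1}{2\beta+1}$), and $|\beta(1-\alpha\lambda)|<1$ --- your computations check out, and these conditions agree with the paper's interval $1-\alpha\lambda\in\bigl[-\tfrac{1}{1+2\beta},\,\min\{\tfrac{1}{\beta},\tfrac{2}{1+\beta}\}\bigr]$ once one notes that $\alpha\lambda>0$ forces $1-\alpha\lambda<1\le\tfrac{2}{1+\beta}$ whenever $\beta\le 1$, so the $\tfrac{2}{1+\beta}$ cap is never active (the paper makes the same observation during its intersection step). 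Your route avoids the discriminant case-split and the squaring of inequalities entirely, which is shorter and less error-prone; the paper's route has the side benefit of producing the value of $\rho_\lambda$ itself, but that formula is quoted from the literature anyway, so for the pure stability question your argument is the more economical one. Two minor points to tighten in a full write-up: in the regime $\alpha\ge 1/\mu$ the $\mu$-part of condition (iii) also needs to be subsumed, not just the $L$-part --- this follows from the same one-liner, since $\beta(\alpha\mu-1)\le\beta(\alpha L-1)\le\tfrac{\beta}{2\beta+1}<1$; and the passage from the open region to its closure by relaxing strict inequalities deserves a sentence verifying that every boundary point is a limit of strictly stable points (for fixed admissible $\alpha$ the stable $\beta$'s form an interval with nonempty interior), though the paper is equally informal on exactly this point.
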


The 
next proposition gives a characterization of the robustness $\mathcal{J}(\al,\be)$ of AG whose proof can be found in the Appendix \ref{sec-appendix-h2-nesterov}.
\begin{proposition}\label{prop-h2-nesterov}
Let $f$ be a quadratic function of the form $f(x) = \tfrac{1}{2} x^\top Q x - p^\top x + r$. Consider the AG iterations given by \eqref{nest:main} with parameters $(\alpha,\beta) \in \mathcal{S}$ . Then the robustness of the AG method is given by
{
\begin{align}\label{eqn-h2sq-acc-grad}
	\mathcal{J}(\alpha,\beta) = \sum_{i=1}^d u_{\alpha,\beta}(\lambda_i)
\end{align}}%
where $\mu = \lambda_1 \leq \lambda_2 \leq \dots \leq \lambda_d = L$ are the eigenvalues of $Q$ and
	{
    \beq \uab (\lambda) \triangleq \al \frac{1+\be(1-\al \lambda)}{2(1-\be(1-\al\lambda))(2+2\be-\al\lambda(1+2\be))} \label{eq-s-lam}.
    \eeq
    }
    In the special case, choosing $\beta=0$ reduces to the formula \eqref{GD_quad_main:b} derived for GD.
\end{proposition}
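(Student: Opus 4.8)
The plan is to mirror the diagonalization argument used for GD in Proposition~\ref{quad_GD_thm}, exploiting the block structure of $A_Q$ in \eqref{A_Q_nest} to decouple the $2d$-dimensional system into $d$ independent planar subsystems. First I would write $Q=U\Lambda U^\top$ with $U$ unitary and $\Lambda=\diag(\lambda_1,\dots,\lambda_d)$, and conjugate the AG state-space data by the unitary $\tilde U\triangleq \diag(U,U)$ acting on $\R^{2d}$. Since $\id-\alpha Q=U(\id-\alpha\Lambda)U^\top$, this conjugation replaces $Q$ by $\Lambda$ in every block of $A_Q$ and transforms $B$ and $\tiR$ compatibly, so that---by exactly the invariance of $\Tr(\tiR X\tiR^\top)$ under unitary state transformations established in the GD proof---the value $\mathcal{J}(\alpha,\beta)=H_2^2(A_Q,B,\tiR)$ is unchanged. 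Thus without loss of generality $Q=\Lambda$ is diagonal.

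With $Q=\Lambda$ diagonal, the transformed state-transition matrix is a $2\times 2$ block matrix all of whose blocks are themselves diagonal; hence, after the orthogonal (permutation) reindexing that groups the $i$-th components of $x_k$ and $x_{k-1}$ together, the system becomes the direct sum of $d$ planar subsystems indexed by $\lambda=\lambda_i$, with data
\begin{equation*}
A_\lambda=\begin{bmatrix}(1+\be)(1-\alpha\lambda) & -\be(1-\alpha\lambda)\\ 1 & 0\end{bmatrix},\quad b=\begin{bmatrix}-\alpha\\ 0\end{bmatrix},\quad c=\sqrt{\tfrac{\lambda}{2}}\begin{bmatrix}1 & 0\end{bmatrix},
\end{equation*}
where $b$ is the relevant block of $B$ and $c$ that of $\tiR$ (recall $T=[\id\ \zeros]$ and $\tfrac12 Q=R^\top R$). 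Since the $H_2^2$ norm is additive over a direct sum of subsystems, $\mathcal{J}(\alpha,\beta)=\sum_{i=1}^d \uab(\lambda_i)$ with $\uab(\lambda)\triangleq c X_\lambda c^\top$, where $X_\lambda$ solves the planar Lyapunov equation $A_\lambda X_\lambda A_\lambda^\top - X_\lambda + bb^\top=0$. This reduces the claim to a single scalar computation.

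Next I would solve this $2\times2$ Lyapunov equation explicitly. Writing $a\triangleq 1-\alpha\lambda$ and $X_\lambda=\left[\begin{smallmatrix}p & q\\ q & r\end{smallmatrix}\right]$, the $(2,2)$ entry gives $r=p$, the $(1,2)$ entry gives $q=\tfrac{(1+\be)a}{1+\be a}\,p$, and substituting these into the $(1,1)$ entry yields a single linear equation for $p$ with solution $p=\alpha^2(1+\be a)/N$, where $N=1+\be a-(1+2\be+2\be^2)a^2+\be(1+2\be)a^3$. Then $\uab(\lambda)=c X_\lambda c^\top=\tfrac{\lambda}{2}p$. The remaining, purely algebraic, step is to verify the factorization $N=(1-a)(1-\be a)\big(1+(1+2\be)a\big)$; using $\alpha\lambda=1-a$ to cancel the factor $(1-a)=\alpha\lambda$ against $\tfrac{\lambda}{2}p$, and rewriting $1+(1+2\be)a=2+2\be-\alpha\lambda(1+2\be)$ together with $1-\be a=1-\be(1-\alpha\lambda)$, produces precisely the expression \eqref{eq-s-lam}. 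Setting $\be=0$ collapses $N$ to $1-a^2$ and recovers the GD formula \eqref{GD_quad_main:b}.

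I expect the cubic factorization of $N$ (equivalently, confirming that the planar Lyapunov solution collapses to the stated product form) to be the only delicate point; the decoupling and unitary-invariance arguments are direct transcriptions of the GD proof, and well-posedness of the Lyapunov equation is guaranteed since $\rho(A_Q)<1$ for $(\alpha,\beta)\in\mathcal{S}$.
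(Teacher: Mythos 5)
Your proposal is correct and follows essentially the same route as the paper's proof: diagonalize $Q$ via the unitary $\tilde U=\diag(U,U)$, permute the state to decouple into $d$ planar subsystems, solve the $2\times 2$ Lyapunov equation per block, and assemble $\mathcal{J}$ from the traces (I verified your factorization $N=(1-a)(1-\be a)\big(1+(1+2\be)a\big)$ and your $p$ agrees with the paper's $y_i^u$). The only difference is presentational: the paper solves each block's Lyapunov equation as a $3\times 3$ linear system in $(y_i^u,y_i^o,y_i^d)$, whereas you eliminate variables sequentially and make the cubic factorization explicit.
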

Since we have an exact characterization of $\mathcal{J}(\al, \be)$, we can derive the optimality conditions for the problem \eqref{opt-pbm-ag} by an approach similar to  Proposition~\ref{prop-gd-optimality}, where the optimizer can be characterized as a root of some polynomial. In dimension $d=2$, given parameters $\mu$ and $L$, the optimizer is easy to compute. However, in high dimensions, this is computationally expensive as it would require determining all the eigenvalues of $Q$ which can be as expensive as optimizing the objective function $f$. Nevertheless, exploiting the convexity properties of the function $u_{\al,\be}(\lambda)$, we develop a tractable upper bound for $\mathcal{J}(\al, \be)$ that only depends on $\mu$ and $L$, hence tractable. Moreover, in the numerical experiments section, we present experiments illustrating that this approach can lead to good performance in terms of trading the speed and the robustness of an algorithm.

To develop this upper bound, first, we show in Lemma \ref{lem-H2-cvx} that the function $\uab(\lambda)$ defined in \eqref{eq-s-lam} is convex in $\lambda \in [\mu,L]$ for fixed $(\alpha,\beta) \in \calS$. Therefore, its maximum is attained at one of the endpoints of this interval, i.e.,
	$$\uab(\lambda) \leq  \buab \triangleq \max \left[ \uab(\mu), \uab(L)\right] \quad \mbox{for} \quad \lambda \in [\mu,L].$$
Substituting this upper bound in \eqref{eqn-h2sq-acc-grad} and \eqref{opt-pbm-ag} leads to 
\begin{align}\label{eqn-h2sq-acc-grad-ub}
	\mathcal{J}(\al, \be) \leq \bar{\mathcal{J}}(\alpha,\beta) \triangleq d  \buab 
\end{align}
and the relaxed optimization problem is
{
\begin{equation} \label{opt-pbm-ag-ub}
\left(\alpha_*, \beta_*\right) \triangleq \arg\min_{(\al,\beta) \in \mathcal{S}} \bar{F}_\tau(\alpha,\beta) \triangleq \bar{\mathcal{J}}(\alpha,\beta) + \tau \frac{1}{1-\rho(\alpha,\beta)^2}.
\end{equation}}%
This objective only depends on $\mu$ and $L$ and is differentiable everywhere in the interior of the stability region $\calS$ except when the first term is not differentiable, i.e., when $\uab(\mu) = \uab(L)$, or the second term is not differentiable, i.e., when $\rho_\mu = \rho_L$ or $\Delta_\mu = 0$ or $\Delta_L=0$. Furthermore, following a similar approach as in Example \ref{example}, the first order optimality conditions with respect to $\alpha$ and $\beta$ results in low-order polynomials (that are independent of the dimension $d$) which can be solved efficiently up to any accuracy. Thus, other than checking the non-differentiable points of $\bar{F}_\tau$, 
the bottleneck in computational complexity 
is determined by computing the roots of a polynomial with a small degree (whose degree is independent from the dimension $d$), which is easy to compute even in high dimensions.

\section{Strongly Convex Functions}\label{strong_smooth_case}
In this section, we generalize our analysis to smooth strongly convex functions 
$f\in S_{\mu,L}(\mathbb{R}^d)$.  
Note that we can rewrite~\eqref{noisy-dyn_sys:main} as
\begin{align}
\xi_{k+1} = A \xi_k + B (\nabla f(y_k) + w_k),\quad  
y_k  = C \xi_k, 
\label{noisy-dyn_sys2:main}
\end{align}
where $w_k \in \R^d$ models the additive noise and satisfies Assumption \ref{asmp1}. Similar to the previous section, we define $\ty_k \triangleq y_k-x^*$ and $\tx_k \triangleq \xi_k - \xi^*$ where $\xi^*$ is equal to $x^*$ for GD and $[x^{*^\top} x^{*^\top}]^\top$ for AG, and in both cases we have $\xi^* = A\xi^*$ and $x^* = C \xi^*$ where $A$ and $C$ are given in \eqref{gradient_ABCD} and \eqref{nest_ABCD} for GD and AG, respectively. We 
use the equation $x_k = T \xi_k$ to relate $x_k$ and $\xi_k$ for any $k \geq 0$, where $T=I_d$ for GD and $T=[I_d \quad 0_d]$ for AG. 
To simplify the notation, we define $\bar{f}:\R^m\rightarrow\R$ such that $\baf(\xi)=f(T\xi)$ for all $\xi\in\R^m$ which means $\baf(\xi_k) = f(x_k)$ for all $k \geq 0$.
\subsection{Rate and robustness}
The goal 
is to extend the definitions of rate and robustness from the quadratic case to 
general strongly convex functions. 
We will use
{
\begin{equation} \label{convex_h_2_stochastic}
\mathcal{J} = \limsup\limits_{k\to\infty} \frac{1}{\sigma^2}\E[f(x_{k}) - f^*]
\end{equation}}%
(provided also in \eqref{eq:robust_def}) to define the robustness of an algorithm and study the convergence rate of the expected suboptimality to an interval around zero with radius $\sigma^2 \mathcal{J}$. 
For both GD and AG, our main results provide upper bounds of the 
form: 
\begin{equation}\label{main_contraction}
\E[f(x_{k}) - f^*] \leq \rho^{2k} \psi_0 + \sigma^2 R,\quad \forall\ k \geq 0,
\end{equation}
where $\psi_0$, $R$, and $0<\rho<1$ are non-negative numbers all of which depend on algorithm parameters and the initial point $x_0$. 
Clearly, $R$ is an upper bound on $\mathcal{J}$; we will show 
in this section that {our bounds are tight}. Moreover, we also recover the fastest known rates in the literature in the absence of noise ($\sigma$=0).
%
Our upper bounds only depend on $\mu$ and $L$, and are computationally tractable and explicit in some cases. With these upper bounds, one can formulate an optimization problem 
similar to that of the previous section to find the algorithm parameters that can achieve a particular trade-off between rate and robustness.
\subsection{Rate and robustness trade-off analysis using Lyapunov functions}\label{sec:lyapunov-convergence}
We use a \emph{Lyapunov function} approach to provide a bound as in \eqref{main_contraction} for both GD and AG methods. In particular, 
we consider a family of Lyapunov functions parameterized by a non-negative constant $c$ and a positive semidefinite matrix $P$ as
\begin{equation}
V_{P,c}(\xi) \triangleq V_P(\xi) +c(\bar{f}(\xi)-f^*),
\end{equation}
where $V_P(\xi) \triangleq (\xi-\xi^*)^\top P (\xi-\xi^*)$, and study the change in the Lyapunov function $V_{P,c}(\xi)$ along 
$\{\xi_k\}_k$ generated by the dynamical system representation \eqref{noisy-dyn_sys2:main}.  

Our first 
result shows how for some positive semidefinite matrix $P$ and $c=0$, $V_p(\xi)\triangleq V_{P,0}(\xi)$ 
evolves along the iterations and provide a characterization of the difference $\E[V_P(\xi_{k+1})]- \rho^2 \E[V_P(\xi_k)]$ for any $k \geq 0$ and $\rho \geq 0$.
\begin{lemma} \label{storage_update}
Consider the Lyapunov function $V_P(\xi)= (\xi-\xi^*)^\top P  (\xi-\xi^*)$ where $P\succeq 0$. Then, we have
{
\begin{equation}\label{storage_update_Expec}
\E[V_P(\xi_{k+1})] = \E\left[\begin{bmatrix} 
	\xi_k-\xi^*\\
    \nabla f(y_k)
\end{bmatrix}^\top
\begin{bmatrix} 
	A^\top P A & A^\top P B\\
    B^\top P A & B^\top P B
\end{bmatrix}
\begin{bmatrix} 
	\xi_k-\xi^*\\
    \nabla f(y_k)
\end{bmatrix}\right]+\sigma^2 \Tr(B^\top P B).
\end{equation}}%
\end{lemma}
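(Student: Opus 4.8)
The plan is to substitute the noisy dynamics directly into $V_P(\xi_{k+1})$, expand the resulting quadratic form, and then take expectations, using Assumption~\ref{asmp1} both to eliminate the noise cross-term and to evaluate the pure-noise term.

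First I would use the fixed-point relation $\xi^* = A\xi^*$ together with the dynamics \eqref{noisy-dyn_sys2:main} to write
\[
\tx_{k+1} = \xi_{k+1}-\xi^* = A(\xi_k-\xi^*) + B\nabla f(y_k) + B w_k = A\tx_k + B\nabla f(y_k) + B w_k.
\]
Setting $v_k \triangleq A\tx_k + B\nabla f(y_k)$, so that $\tx_{k+1} = v_k + Bw_k$, I would expand
\[
V_P(\xi_{k+1}) = (v_k + Bw_k)^\top P(v_k + Bw_k) = v_k^\top P v_k + 2\,v_k^\top P B w_k + w_k^\top B^\top P B w_k.
\]

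Next I would take expectations term by term. The cross-term vanishes: since $v_k$ is a deterministic function of $\xi_k$ and $y_k$, and $w_k$ is zero-mean and independent of these by Assumption~\ref{asmp1}, conditioning on $\xi_k,y_k$ gives $\E[v_k^\top P B w_k] = \E\big[v_k^\top P B\,\E[w_k\mid \xi_k,y_k]\big] = 0$. For the pure-noise term I would use the trace trick together with $\E[w_k w_k^\top]=\sigma^2\id$, namely
\[
\E[w_k^\top B^\top P B w_k] = \E\big[\Tr(B^\top P B\, w_k w_k^\top)\big] = \Tr\big(B^\top P B\, \E[w_k w_k^\top]\big) = \sigma^2\Tr(B^\top P B).
\]
This leaves $\E[V_P(\xi_{k+1})] = \E[v_k^\top P v_k] + \sigma^2\Tr(B^\top P B)$.

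Finally, I would rewrite $v_k^\top P v_k$ in the stated block form. Writing $v_k = \begin{bmatrix} A & B\end{bmatrix}\begin{bmatrix}\tx_k \\ \nabla f(y_k)\end{bmatrix}$, the factorization $\begin{bmatrix} A & B\end{bmatrix}^\top P \begin{bmatrix} A & B\end{bmatrix}$ produces exactly the $2\times 2$ block matrix with entries $A^\top P A$, $A^\top P B$, $B^\top P A$, $B^\top P B$ appearing in the claim; substituting completes the identity. The computation is essentially routine, and the only point requiring care is the vanishing of the cross-term, where the full independence-and-zero-mean content of Assumption~\ref{asmp1} (rather than mere uncorrelatedness) is what is invoked through the conditional-expectation argument.
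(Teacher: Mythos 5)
Your proposal is correct and follows essentially the same route as the paper's proof: substitute the shifted dynamics $\tx_{k+1}=A\tx_k+B(\nabla f(y_k)+w_k)$, expand the quadratic form, eliminate the cross-term via the zero-mean and independence parts of Assumption~\ref{asmp1}, and evaluate the noise term via $\E[w_kw_k^\top]=\sigma^2\id$ to obtain $\sigma^2\Tr(B^\top PB)$. You merely spell out two steps the paper leaves implicit (the conditioning argument for the cross-term and the trace identity), which is fine.
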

\begin{proof}
Since $\xi^*=A\xi^*$, 
\eqref{noisy-dyn_sys2:main} implies $\tx_{k+1} = A \tx_k + B (\nabla f(y_k) + w_k)$ for $k\geq 0$. Therefore,
{
\begin{align}
\E[V_P(\xi_{k+1})] &= \E[\tx_{k+1}^\top P \tx_{k+1}] = \E[(A \tx_k + B (\nabla f(y_k) + w_k))^\top P (A \tx_k + B (\nabla f(y_k) + w_k))] \nonumber \\
&= \E\left[\begin{bmatrix} 
	\tx_k\\
    \nabla f(y_k)
\end{bmatrix}^\top
\begin{bmatrix} 
	A^\top P A & A^\top P B\\
    B^\top P A & B^\top P B
\end{bmatrix}
\begin{bmatrix} 
	\tx_k\\
    \nabla f(y_k)
\end{bmatrix}\right]+\sigma^2 \Tr(B^\top P B). \label{lyapunov_iterates4}
\end{align}}%
where 
we use the fact that $w_k$ is zero mean and independent from $\tx_k$ and $y_k$ 
 and the assumption that $\mathbb{E}(w_k w_k^T) = \sigma^2 \id$ 
to derive \eqref{lyapunov_iterates4}.

\end{proof}
\begin{corollary}\label{storage_update_2}
For any $\rho\in(0,1)$ and $P \in \mathbb{S}^m_+$, 
$V_P(\xi)$ satisfies 
{
\begin{equation}\label{storage_contraction}
\E[V_P(\xi_{k+1})]- \rho^2 \E[V_P(\xi_k)]
=\E\left[\begin{bmatrix} 
	\xi_k-\xi^*\\
    \nabla f(y_k)
\end{bmatrix}^\top
\Phi(A,B,P,\rho)
\begin{bmatrix} 
	\xi_k-\xi^*\\
    \nabla f(y_k)
\end{bmatrix}\right]+\sigma^2 \Tr(B^\top P B)
\end{equation}}%
where
{
\begin{equation}
\Phi(A,B,P,\rho) \triangleq
\begin{bmatrix} 
	A^\top P A - \rho^2 P & A^\top P B\\
    B^\top P A & B^\top P B
\end{bmatrix}.\label{Phi}
\end{equation}}%
\end{corollary}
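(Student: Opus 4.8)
The plan is to derive the claim as an immediate algebraic consequence of Lemma~\ref{storage_update}. First I would invoke that lemma to obtain the expression for $\E[V_P(\xi_{k+1})]$ as the expectation of a quadratic form in the augmented vector $\begin{bmatrix}\xi_k-\xi^*\\ \nabla f(y_k)\end{bmatrix}$ with coefficient matrix $\begin{bmatrix}A^\top P A & A^\top P B\\ B^\top P A & B^\top P B\end{bmatrix}$, plus the additive term $\sigma^2\Tr(B^\top P B)$. It then remains only to subtract $\rho^2\E[V_P(\xi_k)]$ and to show that this subtraction can be absorbed into the coefficient matrix of the quadratic form.

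The key step is the observation that, since $V_P(\xi_k)=(\xi_k-\xi^*)^\top P(\xi_k-\xi^*)$ does not involve $\nabla f(y_k)$, the scalar $\rho^2 V_P(\xi_k)$ equals the quadratic form
\begin{equation*}
\rho^2(\xi_k-\xi^*)^\top P(\xi_k-\xi^*)=\begin{bmatrix}\xi_k-\xi^*\\ \nabla f(y_k)\end{bmatrix}^\top\begin{bmatrix}\rho^2 P & 0\\ 0 & 0\end{bmatrix}\begin{bmatrix}\xi_k-\xi^*\\ \nabla f(y_k)\end{bmatrix}
\end{equation*}
evaluated at the very same augmented vector. By linearity of expectation I would then subtract this identity from the one supplied by Lemma~\ref{storage_update}; because both quadratic forms share the same argument, their difference is the expectation of a single quadratic form whose coefficient matrix is the difference of the two coefficient matrices. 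Only the $(1,1)$ block is altered, producing $A^\top P A-\rho^2 P$ there, while the off-diagonal blocks $A^\top P B$, $B^\top P A$ and the $(2,2)$ block $B^\top P B$ remain unchanged; this is precisely $\Phi(A,B,P,\rho)$ as defined in~\eqref{Phi}. The additive constant $\sigma^2\Tr(B^\top P B)$ is untouched since it does not depend on $\xi_k$.

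I do not anticipate any genuine obstacle here: the result is a direct rewriting that requires nothing beyond linearity of expectation and the fact that $V_P(\xi_k)$ contributes only to the top-left block of the augmented quadratic form. The only point demanding a little care is to confirm that folding $\rho^2 P$ into the coefficient matrix does not disturb the $\nabla f(y_k)$-dependent blocks, which is immediate because the padding matrix $\diag(\rho^2 P,0)$ has zeros everywhere except in its top-left block.
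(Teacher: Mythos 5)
Your proof is correct and matches the paper's intent exactly: the paper states Corollary~\ref{storage_update_2} without proof, treating it as an immediate consequence of Lemma~\ref{storage_update}, which is precisely the subtraction-by-linearity argument you carry out, embedding $\rho^2 V_P(\xi_k)$ as the quadratic form with coefficient matrix $\begin{bmatrix}\rho^2 P & 0\\ 0 & 0\end{bmatrix}$ in the same augmented vector. Nothing is missing.
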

We next show how to provide upper bounds on the improvement in $\E[V_{P,c}(\xi_k)]$ by imposing {\it Matrix Inequalities} (MIs).
In particular, given $A$, $B$, and $C$ defining the first-order algorithm, we assume there exists a symmetric matrix $X\in\mathbb{S}^{m+d}$ such that
\begin{equation}\label{eq:LMI}
X \succeq \Phi(A, B, P, \rho)    
\end{equation}
for some $P\in\mathbb{S}^m_+$ and $\rho\in(0,1)$. Moreover, we assume that for some non-negative constants $\Gamma$ and $c$, the same $\rho$ and $X$ satisfy
{
\begin{align}
\label{eq:energy_dissipation}
    \E\left[
\begin{bmatrix} 
	\xi_k-\xi^*\\
    \nabla f(y_k)
\end{bmatrix}^\top
X
\begin{bmatrix} 
	\xi_k-\xi^*\\
    \nabla f(y_k)
\end{bmatrix}
\right] \leq c(\rho^2 \E[\bar{f}(\xi_k)-f^*] - \E[\bar{f}(\xi_{k+1})-f^*]+\sigma^2 \Gamma)
\end{align}}%
for every $k \geq 0$; hence, it follows from \eqref{eq:LMI} and \eqref{eq:energy_dissipation} along with \eqref{storage_contraction} that 
\begin{align*}
    \rho^2 \E[V_{P,c}(\xi_k)]+\sigma^2 (\Tr(B^\top P B)+ c\Gamma) \geq \E[V_{P,c}(\xi_{k+1})],\quad \forall~k\geq 0.
\end{align*}
Thus, for all $k\geq 0$, we have
{
\begin{align}
\label{eq:general_result}
    \E[V_{P,c}(\xi_k)]\leq \rho^{2k} V_{P,c}(\xi_0)+\frac{1-\rho^{2k}}{1-\rho^2} \sigma^2 R_P,\quad R_P\triangleq \Tr(B^\top P B)+c\Gamma. 
\end{align}}%
This MI based approach has been used in the literature to study the convergence rate of first-order methods, e.g., \cite{lessard2016analysis,hu2017dissipativity}. Here we use it to characterize their rate and robustness under additive gradient noise.
\subsection{Gradient descent (GD) method for strongly convex functions}
Recall the GD update rule
\begin{equation} \label{GD_general}
x_{k+1} = x_k - \alpha(\nabla f(x_k) + w_k),
\end{equation}
which admits the dynamical system representation in \eqref{noisy-dyn_sys2:main} with $A, B, C$ as in \eqref{gradient_ABCD}.
The 
next theorem extends the result of Proposition \ref{quad_GD_thm} to general strongly convex functions and characterize the behavior of 
$\{x_k\}_k$ under additive gradient error.
\begin{proposition} \label{general_gd_cont}Let \Sml, and consider the GD iterations given by \eqref{GD_general}. Assume there exist $\rho\in(0,1)$ and $p>0$ such that 
\begin{equation}\label{LMI_1}
X_0 \succeq \Phi(A,B,P,\rho)
\end{equation}
holds where
{
$X_0 = 
\begin{bmatrix} 
	2\mu L I_d & -(\mu+L)I_d\\
    -(\mu+L)I_d & 2I_d
\end{bmatrix}$} and $P=p I_d$.
Then
for all $k\geq 0$:
\begin{equation}
\label{eq:GD_result}
\E[\norm{x_{k}-x^*}^2] \leq \rho^{2k} \norm{x_0-x^*}^2 + \frac{1-\rho^{2k}}{1-\rho^2}~\sigma^2\alpha^2 d.
\end{equation}
\end{proposition}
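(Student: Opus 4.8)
The plan is to instantiate the general Lyapunov/matrix-inequality machinery of Corollary~\ref{storage_update_2} and \eqref{eq:general_result} in the simplest case $c=0$, with $P=pI_d$ as prescribed by the hypothesis. For GD we have $A=I_d$, $B=-\alpha I_d$, $C=I_d$, so that $\xi_k=x_k$, $\xi^*=x^*$, and $y_k=C\xi_k=x_k$; moreover $\nabla f(y_k)=\nabla f(x_k)=\nabla f(x_k)-\nabla f(x^*)$ since $\nabla f(x^*)=0$. Under these identifications $V_P(\xi_k)=p\,\|x_k-x^*\|^2$, and the entire argument reduces to controlling the quadratic form that drives the recursion in \eqref{storage_contraction}.

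The key step is to show that the $\Phi$-quadratic form on the right-hand side of \eqref{storage_contraction} is nonpositive. I would apply the strong-convexity/smoothness inequality in matrix form \eqref{GD_X} with the choice $x=x_k$ and $y=x^*$; because $x_k-x^*=\xi_k-\xi^*$ and $\nabla f(x_k)=\nabla f(y_k)$, this yields
\[
\begin{bmatrix} \xi_k-\xi^* \\ \nabla f(y_k) \end{bmatrix}^\top X_0 \begin{bmatrix} \xi_k-\xi^* \\ \nabla f(y_k) \end{bmatrix} \leq 0,
\]
where $X_0$ is exactly the coefficient matrix appearing in the statement. Invoking the assumed matrix inequality \eqref{LMI_1}, i.e.\ $X_0\succeq\Phi(A,B,P,\rho)$, sandwiches the $\Phi$-form below the (nonpositive) $X_0$-form, so the $\Phi$-quadratic form is itself nonpositive pathwise, and hence in expectation.

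Feeding this into Corollary~\ref{storage_update_2} eliminates the quadratic term and leaves the one-step contraction
\[
\E[V_P(\xi_{k+1})] \leq \rho^2\,\E[V_P(\xi_k)] + \sigma^2\,\Tr(B^\top P B).
\]
Here $B^\top P B = p\alpha^2 I_d$, so $\Tr(B^\top P B)=p\alpha^2 d$. Dividing by $p$ and using $V_P(\xi_k)=p\|x_k-x^*\|^2$ gives $\E[\|x_{k+1}-x^*\|^2]\leq \rho^2\,\E[\|x_k-x^*\|^2]+\sigma^2\alpha^2 d$; unrolling this geometric recursion with $\sum_{j=0}^{k-1}\rho^{2j}=(1-\rho^{2k})/(1-\rho^2)$ produces precisely \eqref{eq:GD_result}. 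Equivalently, one can quote \eqref{eq:general_result} directly with $c=0$, $\Gamma=0$, and $R_P=p\alpha^2 d$.

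There is little genuine difficulty once the framework is in place: the entire content is the recognition that the matrix $X_0$ in the hypothesis is precisely the coefficient matrix of the strong-convexity/smoothness inequality \eqref{GD_X}, and that for GD the output $y_k$ coincides with the state $x_k$, so that \eqref{GD_X} applies verbatim with $y=x^*$. The only items needing care are the bookkeeping identities $\xi_k=x_k$ and $\nabla f(x^*)=0$, the computation $\Tr(B^\top P B)=p\alpha^2 d$, and the (routine) observation that taking expectations preserves the sign of the nonpositive quadratic form. Feasibility of the pair $(\rho,p)$ for \eqref{LMI_1} is assumed rather than established, so no spectral or existence analysis is required for this statement.
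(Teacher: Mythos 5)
Your proposal is correct and follows essentially the same route as the paper: the paper's proof likewise observes that $\xi_k=y_k$ for GD, applies \eqref{GD_X} with $x=\xi_k$ and $y=x^*$ to verify \eqref{eq:energy_dissipation} with $X=X_0$ and $c=0$, invokes the assumed MI \eqref{LMI_1} to satisfy \eqref{eq:LMI}, and then quotes \eqref{eq:general_result} together with $\Tr(B^\top P B)=p\alpha^2 d$ before dividing by $p$. Your explicit unrolling of the one-step contraction is just the derivation of \eqref{eq:general_result} written out, so there is no substantive difference.
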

\begin{proof}
Noting that $\xi_k=y_k$ for GD, it follows from \eqref{GD_X} with $x=\xi_k$ and $y=x^*$ that \eqref{eq:energy_dissipation} holds for $X=X_0$ and $c=0$.
Moreover, \eqref{LMI_1} implies that \eqref{eq:LMI} holds for $X=X_0$ and $P=p\otimes I_d$; therefore, \eqref{eq:general_result} yields
{
\begin{equation}
p \E[\norm{x_{k}-x^*}^2] \leq \rho^{2k} p \norm{x_0-x^*}^2 + \frac{1-\rho^{2k}}{1-\rho^2} \sigma^2 \Tr(B^\top PB),\quad \forall\ k\geq 0.
\end{equation}}%
With $B=-\alpha I_d$ for GD, we have $\Tr(B^\top P B)=\alpha^2 p d$ and this completes the proof.
\end{proof}
Note that for a fixed $\al$, a smaller $\rho$ makes both terms of \eqref{eq:GD_result} smaller as $\tfrac{1-\rho^{2k}}{1-\rho^2}$ is an increasing function of $\rho$. If $\alpha \in (0,2/L)$, it was shown in \cite{lessard2016analysis} that there exist $(p, \rho)$ such that the MI in~\eqref{LMI_1} holds; moreover, for a given $\alpha$ fixed, the smallest $\rho\in (0,1)$ for which such a positive $p$ exists is equal to
\begin{equation}
\label{eq:rho_GD}
\rho_{GD}(\al) = \max\{|1-\alpha \mu|, |1 - \alpha L|\},
\end{equation}
as in \eqref{eq-rate} given for quadratic functions. Using 
$\rho = \rho_{GD}(\al)$ in \eqref{eq:GD_result} leads to the following 
upper bound for GD.\footnote{\label{J_GD_iterates}{
Trivially, $\mathcal{J}' \leq \frac{\alpha^2 d}{1-\rho_{GD}(\al)^2}$. More details are provided in Appendix \ref{iterate_case} as a supplementary material.}}
\begin{corollary} \label{gd_main_bounds}
Let \Sml. 
Consider the GD iterations given by \eqref{GD_general} with constant stepsize $\alpha \in (0,2/L)$. Then, for all $k\geq 0$,
\begin{equation*}
\E[f(x_k)-f^*] \leq \rho_{GD}(\al)^{2k}~\psi_0+\Big(1-\rho_{GD}(\al)^{2k}\Big)~\sigma^2 R_{GD}(\alpha),
\end{equation*}
where $\psi_0=\frac{L}{2}\norm{x_0-x^*}^2$ and 
$\rho_{GD}(\alpha)$ is given in \eqref{eq:rho_GD}. As a consequence,
\begin{equation}\label{R_GD}
\mathcal{J} \leq R_{GD}(\alpha),\quad\mbox{where}\quad R_{GD}(\alpha) \triangleq \frac{L\alpha^2 d}{2(1-\rho^2_{GD}(\al))}.   
\end{equation}
\end{corollary}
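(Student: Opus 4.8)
The plan is to obtain the displayed function-value bound as an immediate consequence of Proposition~\ref{general_gd_cont} combined with the $L$-smoothness half of \eqref{ineq_S}, and then to read off the robustness bound $\mathcal{J}\leq R_{GD}(\alpha)$ by letting $k\to\infty$.

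First I would fix the rate $\rho=\rho_{GD}(\alpha)$ and verify that this is an admissible choice in the hypothesis \eqref{LMI_1} of Proposition~\ref{general_gd_cont}. This is exactly the point recalled just before the corollary: by the analysis of \cite{lessard2016analysis}, for $\alpha\in(0,2/L)$ the matrix inequality \eqref{LMI_1} admits a positive $p$ precisely when $\rho\geq\rho_{GD}(\alpha)$ with $\rho_{GD}(\alpha)$ as in \eqref{eq:rho_GD}, and $\alpha\in(0,2/L)$ forces $\rho_{GD}(\alpha)<1$. Hence Proposition~\ref{general_gd_cont} applies with this $\rho$ and yields, for all $k\geq 0$,
\[
\E[\norm{x_k-x^*}^2] \leq \rho_{GD}(\alpha)^{2k}\norm{x_0-x^*}^2 + \frac{1-\rho_{GD}(\alpha)^{2k}}{1-\rho_{GD}(\alpha)^2}\,\sigma^2\alpha^2 d.
\]

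Next I would convert this iterate bound into a suboptimality bound using the right-hand inequality of \eqref{ineq_S} with $x=x_k$ and $y=x^*$. Since $\nabla f(x^*)=0$, that inequality gives the pointwise estimate $f(x_k)-f^*\leq \tfrac{L}{2}\norm{x_k-x^*}^2$; taking expectations and substituting the previous display produces
\[
\E[f(x_k)-f^*] \leq \rho_{GD}(\alpha)^{2k}\,\tfrac{L}{2}\norm{x_0-x^*}^2 + \big(1-\rho_{GD}(\alpha)^{2k}\big)\,\frac{L\alpha^2 d}{2(1-\rho_{GD}(\alpha)^2)}\,\sigma^2,
\]
where I have rewritten $\tfrac{L}{2}\cdot\tfrac{1-\rho_{GD}(\alpha)^{2k}}{1-\rho_{GD}(\alpha)^2}\sigma^2\alpha^2 d$ as $\big(1-\rho_{GD}(\alpha)^{2k}\big)\sigma^2 R_{GD}(\alpha)$. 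Identifying $\psi_0=\tfrac{L}{2}\norm{x_0-x^*}^2$ and $R_{GD}(\alpha)=\tfrac{L\alpha^2 d}{2(1-\rho^2_{GD}(\alpha))}$ reproduces the claimed inequality verbatim.

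Finally, for the consequence $\mathcal{J}\leq R_{GD}(\alpha)$, I would divide by $\sigma^2$ and take $\limsup_{k\to\infty}$. Because $\rho_{GD}(\alpha)<1$ on $(0,2/L)$, the transient term $\rho_{GD}(\alpha)^{2k}\psi_0$ vanishes and $1-\rho_{GD}(\alpha)^{2k}\to 1$, so the right-hand side converges to $R_{GD}(\alpha)$, giving $\mathcal{J}=\limsup_k \tfrac{1}{\sigma^2}\E[f(x_k)-f^*]\leq R_{GD}(\alpha)$. I do not expect a genuine obstacle here, since the statement is essentially a corollary of the already-established Proposition~\ref{general_gd_cont}; the only points that require care are the justification that $\rho=\rho_{GD}(\alpha)$ is feasible in \eqref{LMI_1} (which I take from \cite{lessard2016analysis} and which matches the quadratic-case rate \eqref{eq-rate}) and the algebraic bookkeeping that folds the factor $\tfrac{1}{1-\rho^2}$ into the definition of $R_{GD}(\alpha)$.
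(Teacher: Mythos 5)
Your proof is correct and follows essentially the same route as the paper's: invoke Proposition~\ref{general_gd_cont} with $\rho=\rho_{GD}(\alpha)$ (whose feasibility in \eqref{LMI_1} the paper likewise takes from \cite{lessard2016analysis}), apply the smoothness bound $f(x_k)-f^*\leq\tfrac{L}{2}\norm{x_k-x^*}^2$, and let $k\to\infty$ for the robustness conclusion. Your write-up merely makes explicit the bookkeeping that the paper's one-line proof leaves implicit; there is no gap.
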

\begin{proof}
Using the fact that $f(x_k)-f(x^*) \leq \frac{L}{2}\|x_k-x^*\|^2$ for $k\geq 0$ together with Proposition \ref{general_gd_cont} yields the desired result.
\end{proof}
{Note that by substituting $\rho_{GD}$ in \eqref{R_GD}, we obtain $R_{GD} = \bigO(\alpha d)$. This bound is tight, as Proposition \ref{quad_GD_thm} implies that for quadratic functions $\mathcal{J} = \Theta (\alpha d)$.}
\subsection{Accelerated Gradient (AG) method for strongly convex functions}
\label{sec:AG_general}
We next consider the AG algorithm with gradient noise given by
\begin{subequations} \label{AG_general}
\begin{align}
x_{k+1} &= y_k - \alpha (\nabla f(y_k)+w_k) \label{AG_general:a}\\
y_k &= (1+\beta)x_k - \beta x_{k-1}. \label{AG_general:b}
\end{align}
\end{subequations}
As before, these iterations admit the dynamical system representation in \eqref{noisy-dyn_sys2:main} with $A, B, C$ as in \eqref{nest_ABCD}.
We use the following result which extends Lemma 3 in \cite{hu2017dissipativity} to the case with noisy gradient.
\begin{lemma} \label{stronglyconvex_nest_lem}
Let \Sml, and consider the dynamical system representation of AG with $\xi_k=[x_k^\top,x_{k-1}^\top]^\top$.
Then, for any $\rho\in(0,1)$,
{
\begin{align*}
\begin{bmatrix} 
	\xi_k-\xi^*\\
    \nabla f(y_k)
\end{bmatrix}^\top
\left(\rho^2 X_1 + (1-\rho^2)X_2\right)
\begin{bmatrix} 
	\xi_k-\xi^*\\
    \nabla f(y_k)
\end{bmatrix}
\leq &\rho^2 (f(x_k)-f^*)-(f(x_{k+1})-f^*) \\
& + \frac{L\alpha^2}{2}\|w_k\|^2 -\alpha(1-L\alpha)\nabla f(y_k)^\top w_k
\end{align*}}%
holds for all $k\geq 0$, 
where $X_1 = \tilde{X}_1 \otimes I_d$ and $X_2 = \tilde{X}_2 \otimes I_d$ with
{
\begin{align*}
\tilde{X}_1 = \frac{1}{2}
\begin{bmatrix} 
	\be^2\mu& -\be^2\mu & -\be \\
    -\be^2\mu & \be^2\mu & \be  \\
    -\be  & \be  & \alpha(2-L\alpha) 
\end{bmatrix},~ 
\tilde{X}_2 = \frac{1}{2}
\begin{bmatrix} 
	(1+\be)^2\mu & -\be(1+\be)\mu & -(1+\be) \\
    -\be(1+\be)\mu & \be^2\mu & \be  \\
    -(1+\be)  & \be  & \alpha(2-L\alpha) 
\end{bmatrix}.
\end{align*}}%
\end{lemma}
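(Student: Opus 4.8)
The plan is to establish the claimed inequality by combining two instances of the strong-convexity/smoothness relation, one ``anchored'' at $\mu$ and one capturing the descent of the accelerated step, and then bundling the gradient-noise cross-terms separately. Concretely, I would start from the exact AG update \eqref{AG_general}, writing $x_{k+1}=y_k-\alpha(\nabla f(y_k)+w_k)$, and expand $f(x_{k+1})$ using the $L$-smoothness upper bound in \eqref{ineq_S} with $x=x_{k+1}$ and $y=y_k$. This yields
\begin{equation*}
f(x_{k+1})\leq f(y_k)-\alpha\nabla f(y_k)^\top(\nabla f(y_k)+w_k)+\tfrac{L\alpha^2}{2}\|\nabla f(y_k)+w_k\|^2.
\end{equation*}
Expanding the square cleanly separates out the two noise terms $\tfrac{L\alpha^2}{2}\|w_k\|^2$ and $(L\alpha^2-\alpha)\nabla f(y_k)^\top w_k=-\alpha(1-L\alpha)\nabla f(y_k)^\top w_k$ that appear on the right-hand side of the claim, leaving a \emph{noise-free} descent inequality in $f(x_{k+1})$, $f(y_k)$, and $\nabla f(y_k)$ that I can manipulate as in the deterministic case.

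Next I would handle the noise-free part exactly as in Lemma 3 of \cite{hu2017dissipativity}, which is the result being extended. The strategy there is to lower-bound $f(x_k)-f^*$ and $f(x_{k+1})-f^*$ in terms of $f(y_k)$, $\nabla f(y_k)$, and the inner products with $x_k-y_k$ and $x^*-y_k$ using the strong-convexity lower bound in \eqref{ineq_S}. Applying \eqref{ineq_S} with $(x,y)=(x_k,y_k)$ gives a bound controlling $f(x_k)-f(y_k)$, and with $(x,y)=(x^*,y_k)$ gives a bound controlling $f^*-f(y_k)$; taking the convex combination with weights $\rho^2$ and $1-\rho^2$ is precisely what produces the split into $\rho^2 X_1+(1-\rho^2)X_2$. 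The momentum relation $y_k-x_k=\beta(x_k-x_{k-1})$ from \eqref{AG_general:b} is then used to rewrite $x_k-y_k$ and the relevant differences purely in terms of the state coordinates $x_k-x^*$, $x_{k-1}-x^*$, together with $\nabla f(y_k)$, so that every quadratic and bilinear term can be collected into the $3\times 3$ matrices $\tilde X_1$ and $\tilde X_2$ (tensored with $I_d$ because the relation \eqref{GD_X} is block-diagonal across coordinates).

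The main obstacle, and the step I would spend the most care on, is verifying that the accumulated quadratic form assembles \emph{exactly} into $\rho^2 X_1+(1-\rho^2)X_2$ with the stated $\tilde X_1,\tilde X_2$, rather than merely a form of the same shape. This is a bookkeeping-heavy matching of coefficients: after substituting $y_k-x^*=(1+\beta)(x_k-x^*)-\beta(x_{k-1}-x^*)$ everywhere, I must confirm that the coefficients of $\|x_k-x^*\|^2$, $\|x_{k-1}-x^*\|^2$, $(x_k-x^*)^\top(x_{k-1}-x^*)$, and the three cross-terms with $\nabla f(y_k)$ land on the $\tfrac12\beta^2\mu$, $\tfrac12(1+\beta)^2\mu$, $-\beta$, $-(1+\beta)$, and $\tfrac12\alpha(2-L\alpha)$ entries as written. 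I would organize this by treating the $X_1$ and $X_2$ pieces independently—$X_2$ arising from the $(x^*,y_k)$ application of strong convexity and $X_1$ from the $(x_k,y_k)$ application—which explains structurally why the two matrices differ only in their first row/column (the entries involving the $(1+\beta)$ versus $\beta$ weighting of the $x_k-x^*$ coordinate). Once the coefficient match is confirmed for the deterministic terms, the already-isolated noise terms complete the inequality, and since every manipulation used \eqref{ineq_S} as an inequality in the correct direction, the overall bound holds for all $k\geq 0$ and all $\rho\in(0,1)$.
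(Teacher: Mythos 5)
Your proposal is correct and follows essentially the same route as the paper's proof: the paper likewise applies the $L$-smoothness side of \eqref{ineq_S} at $(x_{k+1},y_k)$ to peel off the noise terms $\tfrac{L\alpha^2}{2}\|w_k\|^2-\alpha(1-L\alpha)\nabla f(y_k)^\top w_k$, applies the strong-convexity side at $(x_k,y_k)$ and at $(x^*,y_k)$ to produce the $X_1$- and $X_2$-inequalities after substituting $x_k-y_k=\beta(x_{k-1}-x_k)$ and $y_k-x^*=(1+\beta)(x_k-x^*)-\beta(x_{k-1}-x^*)$, and then takes the $\rho^2$ versus $1-\rho^2$ combination exactly as you describe --- and your coefficient bookkeeping does check out, including your structural observation that $\tilde X_1$ and $\tilde X_2$ differ only in the row and column attached to the $x_k-x^*$ coordinate, since the two applications of strong convexity differ only in replacing $-\beta$ by $-(1+\beta)$ as the weight on that coordinate. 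One minor misattribution: the $\otimes I_d$ structure arises simply because all terms are norms and inner products of $d$-dimensional vectors, not from \eqref{GD_X}, which enters only later (via $X_0$ in Proposition \ref{AG_mainresult}), not in this lemma.
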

\begin{proof}
Setting $x=x_k$ and $y=y_k$ in the second inequality in \eqref{ineq_S} 
leads to
\begin{equation} \label{ineq_S1}
f(x_k)-f(y_k) \geq \nabla f(y_k)^\top (x_k-y_k) +\frac{\mu}{2} \Vert x_k-y_k \Vert^2. 
\end{equation}
Similarly, setting $x=x_{k+1}=y_k-\alpha \nabla f(y_k) - \alpha w_k$ and $y=y_k$ in \eqref{ineq_S} 
yields to
{
\begin{equation}\label{ineq_S2}
\begin{split}
f(y_k)-f(x_{k+1}) &\geq \nabla f(y_k)^\top (\alpha \nabla f(y_k) + \alpha w_k) -\frac{L\alpha^2}{2} \Vert \nabla f(y_k) + w_k \Vert^2\\
& = \frac{\alpha}{2}(2-L\alpha) \Vert \nabla f(y_k) \Vert^2 - \frac{L\alpha^2}{2} \Vert w_k \Vert^2  + \alpha(1-L\alpha) \nabla f(y_k)^\top w_k.
\end{split}
\end{equation}}%
Summing up \eqref{ineq_S1} and \eqref{ineq_S2} implies
\begin{equation} \label{ineq_S3}
\begin{split}
f(x_k)-f(x_{k+1}) \geq & \frac{1}{2}
\begin{bmatrix} 
	x_k-y_k\\
    \nabla f(y_k)
\end{bmatrix}^\top
\begin{bmatrix} 
	\mu\id & \id \\
    \id & \alpha(2-L\alpha)\id
\end{bmatrix}
\begin{bmatrix} 
	x_k-y_k\\
    \nabla f(y_k)
\end{bmatrix}\\
& -\frac{L\alpha^2}{2} \Vert w_k \Vert^2 + \alpha(1-L\alpha)\nabla f(y_k)^\top w_k.
\end{split}
\end{equation}
Note that $x_k - y_k = x_k - ((1+\be) x_k - \be x_{k-1}) = \be (x_{k-1}-x_k)$; hence, \eqref{ineq_S3} implies
\begin{equation}
\label{eq:X1_ineq}
{f(x_k)-f(x_{k+1}) +\frac{L\alpha^2}{2} \Vert w_k \Vert^2 - \alpha(1-L\alpha)\nabla f(y_k)^\top w_k \geq
\begin{bmatrix} 
	x_{k-1}-x^*\\
    x_k-x^*\\
    \nabla f(y_k)
\end{bmatrix}^\top
X_1
\begin{bmatrix} 
	x_{k-1}-x^*\\
    x_k-x^*\\
    \nabla f(y_k)
\end{bmatrix}}.
\end{equation}
Next, in a similar way, setting $x=x^*$ and $y=y_k$ in 
\eqref{ineq_S}, and summing the 
second inequality with \eqref{ineq_S2} leads to 
\begin{equation}
\label{eq:X2_ineq}
{f(x^*)-f(x_{k+1}) + \frac{L\alpha^2}{2}\|w_k\|^2 -\alpha(1-L\alpha)\nabla f(y_k)^\top w_k\geq
\begin{bmatrix} 
	x_{k-1}-x^*\\
	x_{k}-x^*\\
    \nabla f(y_k)
\end{bmatrix}^\top
X_2
\begin{bmatrix} 
	x_{k-1}-x^*\\
	x_k-x^*\\
    \nabla f(y_k)
\end{bmatrix}}.
\end{equation}
Multiplying \eqref{eq:X1_ineq} by $\rho^2$ and \eqref{eq:X2_ineq} by $1-\rho^2$, and summing them will lead to the desired result.
\end{proof}
\begin{proposition} \label{AG_mainresult}
Let \Sml, 
and consider the AG iterations given by \eqref{AG_general}. Assume there exist $\rho\in(0,1)$, $P \in \mathbb{S}_{+}^{2d}$, and $c_0,c\geq 0$ such that
\begin{equation}\label{LMI_nest}
c_0 X_0
+cX(\rho)
\succeq \Phi(A,B,P,\rho),\quad \hbox{where}
\end{equation}
{
\begin{equation*}
X_0 = 
\begin{bmatrix} 
	2\mu L~C^\top C & -(\mu+L)C^\top\\
    -(\mu+L)C & 2I_d
\end{bmatrix}, \quad X(\rho) = \rho^2 X_1 + (1-\rho^2)X_2
\end{equation*}}%
for $X_1$ and $X_2$ defined in Lemma \ref{stronglyconvex_nest_lem}. Then the following bounds hold for all $k\geq 0$:
{
\begin{equation}\label{AG_stochastic_bound1}
\E[V_{P,c}(\xi_k)] \leq \rho^{2k} 
V_{P,c}(\xi_0) + \frac{1-\rho^{2k}}{1-\rho^2}\sigma^2\alpha^2\left(\frac{c}{2}Ld+\Tr(P_{11})\right),
\end{equation}}%
where 
$P_{11}\in\mathbb{S}_+^d$ is the submatrix of $P$ formed by its first $d$ rows and $d$ columns. 
\end{proposition}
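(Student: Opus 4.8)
The plan is to recognize this proposition as a direct instance of the general matrix-inequality framework developed in Section~\ref{sec:lyapunov-convergence}, specialized by the choice $X = c_0 X_0 + c\, X(\rho)$. Under this identification the hypothesis \eqref{LMI_nest} is \emph{verbatim} the abstract LMI \eqref{eq:LMI}, so the entire task reduces to verifying the energy-dissipation inequality \eqref{eq:energy_dissipation} for this particular $X$ and then reading off the constants $\Gamma$ and $R_P$ so as to invoke the master bound \eqref{eq:general_result}.

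To establish \eqref{eq:energy_dissipation}, I would bound the quadratic form associated with $X$ by treating its two summands separately. For the $c_0 X_0$ piece I would apply the strong-convexity/smoothness relation \eqref{GD_X} with $x = y_k$ and $y = x^*$; since $\nabla f(x^*)=0$ and $y_k - x^* = C(\xi_k-\xi^*)$, this yields the inequality $[\,\xi_k-\xi^*;\ \nabla f(y_k)\,]^\top X_0\, [\,\xi_k-\xi^*;\ \nabla f(y_k)\,] \le 0$, so that (because $c_0 \ge 0$) this term contributes nonpositively and may simply be dropped. For the $c\, X(\rho)$ piece I would quote Lemma~\ref{stronglyconvex_nest_lem} directly, which bounds the corresponding form by
$\rho^2(f(x_k)-f^*) - (f(x_{k+1})-f^*) + \tfrac{L\alpha^2}{2}\|w_k\|^2 - \alpha(1-L\alpha)\nabla f(y_k)^\top w_k$.

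Summing the two bounds (with weights $c_0$ and $c$) and taking expectations, I would invoke Assumption~\ref{asmp1}: since $w_k$ is zero-mean and independent of $y_k$, the cross term vanishes, $\E[\nabla f(y_k)^\top w_k]=0$, while $\E\|w_k\|^2 = \Tr(\E[w_k w_k^\top]) = \sigma^2 d$. Using $\bar{f}(\xi_k)=f(x_k)$, this produces precisely \eqref{eq:energy_dissipation} with the same $c$ and with $\Gamma = \tfrac{L\alpha^2 d}{2}$. The only point requiring care is this expectation step — correctly killing the cross term and evaluating $\E\|w_k\|^2$ — but it is routine given Assumption~\ref{asmp1}; the substantive convexity work has already been absorbed into Lemma~\ref{stronglyconvex_nest_lem}.

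With \eqref{eq:LMI} and \eqref{eq:energy_dissipation} both verified, the bound \eqref{eq:general_result} immediately gives the claimed inequality with $R_P = \Tr(B^\top P B) + c\Gamma$. It remains to compute this constant: for AG, $B = [\,-\alpha I_d;\ 0_d\,]$ from \eqref{nest_ABCD}, so $B^\top P B = \alpha^2 P_{11}$ and hence $\Tr(B^\top P B) = \alpha^2 \Tr(P_{11})$. Combining, $R_P = \alpha^2\Tr(P_{11}) + c\,\tfrac{L\alpha^2 d}{2} = \alpha^2\big(\Tr(P_{11}) + \tfrac{c}{2}Ld\big)$, which matches the coefficient in \eqref{AG_stochastic_bound1} and completes the argument.
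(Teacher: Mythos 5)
Your proposal is correct and follows essentially the same route as the paper's proof: you bound the $c_0X_0$ block via \eqref{GD_X} at $x=y_k$, $y=x^*$ (using $y_k-x^*=C(\xi_k-\xi^*)$), handle the $cX(\rho)$ block via Lemma \ref{stronglyconvex_nest_lem}, verify \eqref{eq:energy_dissipation} with $\Gamma=\tfrac{1}{2}L\alpha^2 d$, and invoke the master bound \eqref{eq:general_result} together with $\Tr(B^\top P B)=\alpha^2\Tr(P_{11})$. The only difference is that you make explicit the expectation step (the vanishing cross term $\E[\nabla f(y_k)^\top w_k]=0$ from Assumption \ref{asmp1} and $\E\|w_k\|^2=\sigma^2 d$) that the paper leaves implicit, which is a welcome clarification rather than a deviation.
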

\begin{proof}
Using \eqref{GD_X} for $x=y_k$ and $y=x^*$ along with the fact $y_k = C \xi_k$ yields
\begin{equation}
\label{eq:X0}
\begin{bmatrix} 
	\xi_k-\xi^*\\
    \nabla f(y_k)
\end{bmatrix}^\top
X_0
\begin{bmatrix} 
	\xi_k-\xi^*\\
    \nabla f(y_k)
\end{bmatrix}\leq 0.
\end{equation}
This inequality along with Lemma \ref{stronglyconvex_nest_lem} implies that \eqref{eq:energy_dissipation} holds for $X = c_0 X_0 + cX(\rho)$ 
and $\Gamma=\tfrac{1}{2}L\alpha^2 d$. Moreover, \eqref{LMI_nest} implies that \eqref{eq:LMI} holds for this $X$. Therefore, \eqref{eq:general_result} holds and 
$\Tr(B^\top P B) = \alpha^2 \Tr(P_{11})$ completes the proof. 
\end{proof}
As stated in the previous proposition, the MI in \eqref{LMI_nest} provides us with $(P, \rho)$ pairs 
and through \eqref{AG_stochastic_bound1} we obtain an upper bound on $\sup_k \E[V_{P,c}(\xi_k)]$, which leads to a bound $R$ on $\mathcal{J}$. However, solving this $2d\times 2d$ MI becomes intractable as $d$ increases. To keep this MI invariant of the dimension,
we restrict our attention to the case that $P$ is in the form of $\tilde{P}\otimes I_d$, where $\tilde{P}$ is a $2\times 2$ symmetric positive semidefinite matrix; hence, \eqref{LMI_nest} becomes a $3\times 3$ MI.
The following corollary shows the robustness bound when $P=\tilde{P}\otimes I_d$ for some $\tilde{P}\in\mathbb{S}_+^2$. 
\begin{corollary}\label{AG_mainresult_P2}
Let \Sml, and consider the AG iterations given by \eqref{AG_general} with parameters $\alpha$ and $\beta$. Assume there exist $\rho\in(0,1)$, $\tilde{P} \in \mathbb{S}_{+}^{2}$, $c_0\geq 0$, and $c> 0$ such that
$c_0 X_0+cX(\rho) \succeq \Phi(A,B,P,\rho)$
with $X_0$ defined in Theorem \ref{AG_mainresult}, $X_1, X_2$ defined in Lemma \ref{stronglyconvex_nest_lem}, and $P=\tilde{P}\otimes I_d$. Then for all $k \geq 0$,
{
\begin{align}
&\E[f(x_k)-f^*] \leq \rho^{2k}~\psi_0+\Big(1-\rho^{2k}\Big)\sigma^2 R_{AG}(\alpha,\beta),\\
&R_{AG}(\alpha,\beta) \triangleq
\begin{dcases}
\frac{L\alpha^2 d}{2(1-\rho^2)}~ \frac{cL+2\tilde{P}_{11}}{cL+2(\tilde{P}_{11}-\tilde{P}_{12}^2/\tilde{P}_{22})}, 
& \tilde{P}_{22} >0, \\
\frac{L\alpha^2 d}{2(1-\rho^2)}, & \tilde{P}_{22} = 0.
\end{dcases}
\label{R_AG}
\end{align}}%
where $\psi_0=\frac{1}{c}V_{P,c}(\xi_0)$.
As a consequence,
$\mathcal{J} \leq R_{AG}(\alpha,\beta)$.
\end{corollary}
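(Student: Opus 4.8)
The plan is to derive Corollary~\ref{AG_mainresult_P2} directly from Proposition~\ref{AG_mainresult} by specializing $P=\tilde{P}\otimes I_d$ and then converting the bound on $\E[V_{P,c}(\xi_k)]$ into a bound on $\E[f(x_k)-f^*]$. First I would invoke Proposition~\ref{AG_mainresult}: since the standing hypothesis $c_0 X_0+cX(\rho)\succeq\Phi(A,B,P,\rho)$ is exactly \eqref{LMI_nest} for the chosen $P$, the bound \eqref{AG_stochastic_bound1} applies verbatim. The only quantity to evaluate is $\Tr(P_{11})$: because $P=\tilde{P}\otimes I_d$ has top-left $d\times d$ block equal to $\tilde{P}_{11}I_d$, we get $\Tr(P_{11})=\tilde{P}_{11}d$, so the noise coefficient $\tfrac{c}{2}Ld+\Tr(P_{11})$ becomes $\tfrac{d}{2}(cL+2\tilde{P}_{11})$.

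The main work is the conversion step, and the essential observation is that the naive estimate $V_{P,c}(\xi_k)\ge c(f(x_k)-f^*)$ (i.e.\ simply discarding $V_P\ge0$) is too lossy to recover the stated denominator. Instead I would extract the largest possible multiple of $f(x_k)-f^*$ from $V_{P,c}$. Writing $a=x_k-x^*$ and $b=x_{k-1}-x^*$, one has $V_P(\xi_k)=\tilde{P}_{11}\|a\|^2+2\tilde{P}_{12}\,a^\top b+\tilde{P}_{22}\|b\|^2$. For $\tilde{P}_{22}>0$ I would minimize this quadratic form over the stale coordinate $b$ (completing the square / taking the Schur complement), obtaining a lower bound depending only on $a$, namely $V_P(\xi_k)\ge(\tilde{P}_{11}-\tilde{P}_{12}^2/\tilde{P}_{22})\|x_k-x^*\|^2$, the coefficient being nonnegative precisely because $\tilde{P}\succeq0$. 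Combining with the $L$-smoothness estimate $\|x_k-x^*\|^2\ge\tfrac{2}{L}(f(x_k)-f^*)$ (the left inequality in \eqref{ineq_S} at $x=x_k$, $y=x^*$) gives
\begin{equation*}
V_{P,c}(\xi_k)=V_P(\xi_k)+c\big(f(x_k)-f^*\big)\ge \gamma\,\big(f(x_k)-f^*\big),\qquad \gamma\triangleq\frac{cL+2(\tilde{P}_{11}-\tilde{P}_{12}^2/\tilde{P}_{22})}{L}.
\end{equation*}

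Taking expectations and dividing by $\gamma$ then turns \eqref{AG_stochastic_bound1} into the claimed inequality: the $\sigma^2$ term becomes $\tfrac{1}{\gamma}\cdot\tfrac{1-\rho^{2k}}{1-\rho^2}\sigma^2\alpha^2\cdot\tfrac{d}{2}(cL+2\tilde{P}_{11})$, which simplifies to exactly $(1-\rho^{2k})\sigma^2 R_{AG}(\alpha,\beta)$ with the $\tilde{P}_{22}>0$ branch of \eqref{R_AG}. For the transient term I would use $\gamma\ge c$ (valid since $\tilde{P}_{11}-\tilde{P}_{12}^2/\tilde{P}_{22}\ge0$), so that $\tfrac{1}{\gamma}\rho^{2k}V_{P,c}(\xi_0)\le\tfrac{1}{c}\rho^{2k}V_{P,c}(\xi_0)=\rho^{2k}\psi_0$, yielding a valid (slightly looser) coefficient matching the stated $\psi_0$. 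The degenerate case $\tilde{P}_{22}=0$ is handled by noting that $\tilde{P}\succeq0$ then forces $\tilde{P}_{12}=0$, so $V_P(\xi_k)=\tilde{P}_{11}\|x_k-x^*\|^2$ and the same argument gives $\gamma=(cL+2\tilde{P}_{11})/L$, reproducing the second branch of \eqref{R_AG} (consistent with the convention $\tilde{P}_{12}^2/\tilde{P}_{22}=0$). Finally, letting $k\to\infty$ and using $\rho<1$ yields $\mathcal{J}\le R_{AG}(\alpha,\beta)$. The only genuinely delicate step is recognizing that the Schur-complement minimization over the redundant coordinate $x_{k-1}$ is what sharpens the denominator; the remainder is bookkeeping.
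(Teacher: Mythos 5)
Your proof is correct and follows essentially the same route as the paper: the paper also invokes Proposition~\ref{AG_mainresult}, obtains the lower bound $V_P(\xi_k)\geq(\tilde{P}_{11}-\tilde{P}_{12}^2/\tilde{P}_{22})\|x_k-x^*\|^2$ via the Schur complement (written there as a decomposition of $\tilde{P}$ into two positive semidefinite matrices, which is equivalent to your completing-the-square over the stale coordinate), handles $\tilde{P}_{22}=0$ through $\tilde{P}_{12}=0$, and combines this with $\frac{2}{L}(f(x_k)-f^*)\leq\|x_k-x^*\|^2$ and $\Tr(B^\top PB)=\alpha^2\tilde{P}_{11}d$. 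Your write-up is in fact slightly more explicit than the paper's, spelling out the $\gamma\geq c$ step that justifies the stated transient coefficient $\psi_0=\frac{1}{c}V_{P,c}(\xi_0)$.
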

\begin{proof}
Note that since $\tilde{P}\in\mathbb{S}^2_+$, we have $\tilde{P}_{22}\geq 0$. If $\tilde{P}_{22} > 0$, then using Schur complements, $\tilde{P}$ can be written as sum of two positive semidefinite matrices:
\begin{equation*}
\tilde{P} =
\begin{bmatrix}
\tilde{P}_{11}-\tilde{P}_{12}^2/\tilde{P}_{22}& 0\\
0 & 0
\end{bmatrix}
+
\begin{bmatrix}
\tilde{P}_{12}^2/\tilde{P}_{22} &  \tilde{P}_{12}\\
\tilde{P}_{12} & \tilde{P}_{22}
\end{bmatrix}.
\end{equation*}
Hence, $V_P(\xi_k) \geq (\tilde{P}_{11}-\tilde{P}_{12}^2/\tilde{P}_{22}) \|x_k - x^*\|^2$. On the other hand, if $\tilde{P}_{22} = 0$, then $\tilde{P}\in\mathbb{S}^2_+$ implies that $\tilde{P}_{12} = 0$ as well and we get  $V_P(\xi_k) \geq \tilde{P}_{11} \|x_k - x^*\|^2$. In either case, substituting the derived lower bounds on $V_P(\xi_k)$ in \eqref{AG_stochastic_bound1} and using the facts that $\frac{2}{L}(f(x_k)-f^*) \leq \|x_k - x^*\|^2$ and $\Tr(B^\top P B) = \Tr(B B^\top P) = \alpha^2 \tilde{P}_{11} d$ completes the proof.
\end{proof}
\begin{rmk}
Note that for $\alpha\in(0,\frac{2}{L})$, setting $\be=0$ in AG yields GD algorithm with stepsize $\alpha$. Selecting $c_0=1$, $c=0$ and 
$\tilde{P} =
\begin{bmatrix}
\tilde{p} &0\\
0 & 0
\end{bmatrix}$, we observe that $\tilde{p}=L^2$ satisfies \eqref{LMI_nest}; therefore, Corollary~\ref{AG_mainresult_P2} implies that $\mathcal{J}\leq \frac{L\alpha^2 d}{2(1-\rho_{GD}(\alpha)^2)}$ for GD; hence, the result in~\eqref{R_GD} can be derived as a special case of Corollary~\ref{AG_mainresult_P2}
\end{rmk}
Using this result, the next corollary characterizes the rate and robustness of the AG method with 
a particular parameterization.
\begin{corollary} \label{agd_main_bounds_2}
Let \Sml, 
consider the AG iterations given by \eqref{AG_general} with constant stepsize $\alpha \in (0,1/L]$ and $\beta(\alpha)=\frac{1-\sqrt{\alpha\mu}}{1+\sqrt{\alpha\mu}}$. Then, for all $k\geq 0$,
{
\begin{equation*}
\E[f(x_k)-f^*] \leq \rho_{AG}(\al)^{2k}~\psi_0
 +\Big(1-\rho_{AG}(\al)^{2k}\Big)~\sigma^2 R_{AG}(\alpha)  
\end{equation*}}%
where $\psi_0= V_{P,1}(\xi_0)$, $\rho_{AG}(\alpha)\triangleq\sqrt{1-\sqrt{\alpha\mu}}$ and $R_{AG}(\alpha)\triangleq\frac{\alpha d}{2(1-\rho_{AG}(\alpha)^2)}(1+\alpha L)$; hence, 
$\mathcal{J} \leq R_{AG}(\alpha) {=\frac{\sqrt{\alpha}d}{2\sqrt{\mu}}(1+\alpha L)}$.
\end{corollary}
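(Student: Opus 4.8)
The plan is to obtain this corollary as a direct instantiation of Corollary~\ref{AG_mainresult_P2} for a carefully chosen certificate. I would fix $c=1$, take $\rho=\rho_{AG}(\alpha)=\sqrt{1-\sqrt{\alpha\mu}}$ (so that $1-\rho^2=\sqrt{\alpha\mu}$), use the prescribed $\beta=\frac{1-\sqrt{\alpha\mu}}{1+\sqrt{\alpha\mu}}$, and choose the \emph{rank-one} matrix $\tilde P = vv^\top$ with $v=\big[\sqrt{\tfrac{1}{2\alpha}},\ \sqrt{\tfrac{\mu}{2}}-\sqrt{\tfrac{1}{2\alpha}}\big]^\top$, together with an appropriate constant $c_0\ge 0$. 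The motivation is that $\alpha\le 1/L$ makes $f$ also $\tfrac1\alpha$-smooth, so this $(\beta,\rho)$ is exactly Nesterov's optimal choice for the effective condition number $\tilde\kappa=1/(\alpha\mu)$; for $\alpha=1/L$ the vector $v$ reduces to $\big[\sqrt{L/2},\ \sqrt{\mu/2}-\sqrt{L/2}\big]^\top$, the Lyapunov certificate associated with the standard accelerated rate. One records that $\tilde P_{11}=\tfrac1{2\alpha}$, $\tilde P_{22}=\tfrac{(1-\sqrt{\alpha\mu})^2}{2\alpha}$, and $\det\tilde P=0$.

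The first step is to verify the matrix inequality \eqref{LMI_nest}, i.e.\ $c_0 X_0 + c\,X(\rho)\succeq \Phi(A,B,P,\rho)$ with $P=\tilde P\otimes I_d$. Because $A$ and $B$ in \eqref{nest_ABCD} are block matrices whose blocks are scalar multiples of $I_d$, and because $P$, $X_0$, and the $X_1,X_2$ of Lemma~\ref{stronglyconvex_nest_lem} all carry the Kronecker factor $I_d$, the entire $3d\times 3d$ inequality collapses to $M\otimes I_d\succeq 0$ for a single $3\times 3$ symmetric matrix $M=c_0\tilde X_0 + c\,(\rho^2\tilde X_1 + (1-\rho^2)\tilde X_2) - \Phi(\tilde A,\tilde B,\tilde P,\rho)$, where $\tilde A,\tilde B$ are the scalar analogues of $A,B$. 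The remaining task is to substitute the chosen $\alpha$-dependent values of $\beta$, $\rho$, and $\tilde P$ into $M$ and show $M\succeq 0$, e.g.\ by exhibiting $M$ as a positive semidefinite (and, as one expects at the tight rate, rank-deficient) matrix, checking its diagonal entries and the vanishing of the relevant minors or writing $M$ as an explicit outer product. I expect this verification to be the main obstacle: it is an elementary but delicate algebraic computation whose success hinges on the precise alignment between $\beta$, $\rho^2=1-\sqrt{\alpha\mu}$, and the rank-one $\tilde P$, and on pinning down the correct value of $c_0$.

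Once \eqref{LMI_nest} is certified, the stated bound follows by specializing Corollary~\ref{AG_mainresult_P2}. Since $\tilde P$ is rank one we have $\tilde P_{11}-\tilde P_{12}^2/\tilde P_{22}=0$, so the $\tilde P_{22}>0$ branch of \eqref{R_AG} simplifies to $R_{AG}(\alpha,\beta)=\frac{L\alpha^2 d}{2(1-\rho^2)}\cdot\frac{cL+2\tilde P_{11}}{cL}$. Plugging in $c=1$, $\tilde P_{11}=\tfrac1{2\alpha}$, and $1-\rho^2=\sqrt{\alpha\mu}$ yields
\begin{equation*}
R_{AG}(\alpha)=\frac{L\alpha^2 d}{2\sqrt{\alpha\mu}}\cdot\frac{L+1/\alpha}{L}=\frac{\alpha d\,(1+\alpha L)}{2\sqrt{\alpha\mu}}=\frac{\sqrt{\alpha}\,d}{2\sqrt{\mu}}\,(1+\alpha L),
\end{equation*}
which also equals $\frac{\alpha d}{2(1-\rho_{AG}(\alpha)^2)}(1+\alpha L)$ as claimed; setting $\psi_0=V_{P,1}(\xi_0)$ completes the identification.

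As a sanity check that bypasses the $R$-formula, I would also read the constant directly off \eqref{AG_stochastic_bound1}: with $c=1$ the noise coefficient is $\alpha^2\!\big(\tfrac12 Ld+\Tr(P_{11})\big)=\alpha^2 d\big(\tfrac{L}{2}+\tfrac1{2\alpha}\big)=\tfrac{\alpha d}{2}(1+\alpha L)$, since $\Tr(P_{11})=\tilde P_{11}d=\tfrac{d}{2\alpha}$; dividing by $1-\rho^2=\sqrt{\alpha\mu}$ recovers $R_{AG}(\alpha)$, while $V_{P,1}(\xi_k)\ge f(x_k)-f^*$ (as $V_P\succeq 0$) supplies the $\rho^{2k}\psi_0$ rate term and converts the factor $\tfrac{1-\rho^{2k}}{1-\rho^2}\sigma^2$ into $(1-\rho^{2k})\sigma^2 R_{AG}(\alpha)$. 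Taking $\limsup_{k\to\infty}$ then gives $\mathcal{J}\le R_{AG}(\alpha)$.
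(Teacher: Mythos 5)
Your proposal is correct and follows essentially the same route as the paper: the paper's proof likewise obtains the corollary by instantiating Corollary~\ref{AG_mainresult_P2} with $c=1$, $\rho=\rho_{AG}(\alpha)=\sqrt{1-\sqrt{\alpha\mu}}$, $\beta(\alpha)=\frac{1-\sqrt{\alpha\mu}}{1+\sqrt{\alpha\mu}}$, and exactly your rank-one certificate $\tilde P(\alpha)$ from \eqref{Nest_GeneralRate}, and your constant-chasing---both through the $\tilde P_{22}>0$ branch of \eqref{R_AG} with $\det\tilde P=0$, and via your sanity check reading the noise coefficient $\tfrac{\alpha d}{2}(1+\alpha L)$ directly off \eqref{AG_stochastic_bound1} together with $V_{P,1}(\xi_k)\ge f(x_k)-f^*$---reproduces the paper's arithmetic exactly. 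The one step you honestly flag as the ``main obstacle,'' certifying the matrix inequality \eqref{LMI_nest}, is precisely the step the paper does \emph{not} compute in-house either: it invokes \cite[Theorem~2.3]{aybat2019universally}, which asserts $X(\rho_{AG}(\alpha))\succeq \Phi\bigl(A,B,\tilde P(\alpha)\otimes I_d,\rho_{AG}(\alpha)\bigr)$ for every $\alpha\in(0,1/L]$; in particular $c_0=0$ suffices, so no search for an ``appropriate'' $c_0$ is needed. Your planned reduction of the MI to a $3\times 3$ inequality via the Kronecker structure is the correct mechanism (it is the same dimension reduction the paper describes just before Corollary~\ref{AG_mainresult_P2}) and would succeed, but as written that verification is left undone; to make the argument complete you should either carry out that algebra or simply cite the companion-paper theorem as the paper does.

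One minor boundary point: if $\alpha\mu=1$ (possible only when $\mu=L$ and $\alpha=1/L$), then $\tilde P_{22}=\tilde P_{12}=0$ and the $\tilde P_{22}=0$ branch of \eqref{R_AG} applies instead of the one you use; it yields an even smaller constant, so the stated bound $\mathcal{J}\le \frac{\sqrt{\alpha}d}{2\sqrt{\mu}}(1+\alpha L)$ still holds in that degenerate case.
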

\begin{proof}
\cite[Theorem~2.3]{aybat2019universally} guarantees that for any $\alpha\in(0,1/L]$, the matrix inequality $X(\rho_{AG}(\alpha))
\succeq \Phi(A,B,P(\alpha),\rho_{AG}(\alpha))$ holds for $A$ and $B$ as in \eqref{nest_ABCD} corresponding to given $\alpha$ and $\beta(\alpha)=\frac{1-\sqrt{\alpha\mu}}{1+\sqrt{\alpha\mu}}$, where 
$P(\alpha)=\tilde{P}(\alpha)\otimes I_d$ for {
\begin{equation}\label{Nest_GeneralRate}
\tilde{P}(\alpha)\triangleq
\begin{bmatrix}
\sqrt{\frac{1}{2\alpha}}\\\sqrt{\frac{{\mu}}{2}}-\sqrt{\frac{1}{2\alpha}}    
\end{bmatrix}
\begin{bmatrix}
\sqrt{\frac{1}{2\alpha}} & \sqrt{\frac{{\mu}}{2}}-\sqrt{\frac{1}{2\alpha}}    
\end{bmatrix}.
\end{equation}}%
Therefore, the desired result follows from Corollary~\ref{AG_mainresult_P2}.
\end{proof}

It is worth noting that 
although solving 
$c_0 X_0+cX(\rho) \succeq \Phi(A,B,P,\rho)$ considering only lower-dimensional $P=\tilde{P}\otimes I_d$ is more restrictive, this small-dimensional MI can still recover the well-known 
rate $\bar{\rho}_{AG} = \sqrt{1-\sqrt{\frac{1}{\kappa}}}$ for the deterministic case, 
i.e., $\sigma=0$, in the literature \cite{nesterov_convex}
by setting the stepsize $\alpha=\frac{1}{L}$ and momentum parameter $\be(\alpha)=\frac{\sqrt{\kappa}-1}{\sqrt{\kappa}+1}$. As shown in \cite{hu2017dissipativity}, this claim can be verified by setting $P=\tilde{P}(\alpha)\otimes I_d$ with $\alpha=1/L$.
{In addition, 
for the case $L = \mu$}, substituting $\beta(\alpha)$ in the explicit expression of $\mathcal{J}$ in \eqref{eqn-h2sq-acc-grad} for quadratic functions, we obtain $\mathcal{J}(\alpha,\beta(\alpha))=$
$\Theta(\frac{\sqrt{\alpha}d}{\sqrt{\mu}})$ which implies that $R_{AG}$ is a tight bound for $\mathcal{J}$ in terms of $\alpha$ dependency.

It is worth noting that the best rate known in the literature for general \Sml~is $\rho_*=\sqrt{1-\sqrt{2\kappa-1}/\kappa}$ provided in \cite[Theorem 7]{safavi2018explicit}. However, this rate differs from $\bar{\rho}_{AG}$ just by a constant factor, i.e., $\frac{1-\rho_*^2}{1-\bar{\rho}_{AG}^2}\leq \sqrt{2}$. Moreover, for the special case of \Sml~being a quadratic function, the best linear rate for AG is $ 1-2/\sqrt{3\kappa+1}$  
(see~\cite[Prop 1]{lessard2016analysis} for an asymptotic analysis and \cite{bugra-wass-rate} for a non-asymptotic analysis). Therefore, we can conclude that $\rho=\mathcal{O}(\sqrt{1-1/\sqrt{\kappa}})$ and $\rho=\mathcal{O}(1-1/\sqrt{\kappa})$ denote the best known $\kappa$ dependency of the rate coefficient for general and quadratic \Sml, respectively. That said, since the focus of Section~\ref{strong_smooth_case} is on general strongly convex functions \Sml, in the following subsection in order to approximate the rate-robustness trade-off curves for AG, we consider $\bar{\rho}_{AG}=\sqrt{1-1/\sqrt{\kappa}}$ as the reference rate as it exhibits the optimal $\kappa$ dependency.

\subsection{
Approximating the rate and robustness trade-off curve}
Similar to \eqref{eq:H2-minimization}, \eqref{eq:H2-aux-problem}, and \eqref{opt-pbm-gd} in Section \ref{quad_case}, there are several ways of forming an optimization problem to trade-off the rate and robustness.
In this section we focus on strongly convex objectives \Sml, where unlike the quadratic objectives, we have access to upper bounds for the robustness measure rather than exact expressions. Therefore, we 
adopt a formulation similar to \eqref{eq:H2-minimization}
and vary $\epsilon$ to characterize the rate-robustness trade-off. In fact, the parameter $\epsilon$ shows how much we desire to lose in terms of convergence rate to gain robustness. 

In the rest, let $\bar{\rho}_{GD}\triangleq 1 - \frac{2}{\kappa + 1}$ and $\bar{\rho}_{AG}\triangleq\sqrt{1-\sqrt{1/\kappa}}$ denote the linear convergence rates for GD and AG from the literature for \Sml \cite{nesterov_convex}. In this section, we assume $\kappa\neq 1$, as the $\kappa=1$ case is trivial. We also let $\mathcal{J}_{{\rm GD},\epsilon}$ and $\mathcal{J}_{{\rm AG},\epsilon}$ be the best robustness value of GD and AG can achieve corresponding to rate $\rho_{{\rm GD}, \epsilon}\triangleq (1+\epsilon)\bar{\rho}_{GD}$ and $\rho_{{\rm AG}, \epsilon}\triangleq (1+\epsilon)\bar{\rho}_{AG}$, respectively. In the rest of this section, we discuss methodologies to derive tractable upper bounds on $\mathcal{J}_{{\rm GD},\epsilon}$ and $\mathcal{J}_{{\rm AG},\epsilon}$.

In particular, for GD, the best robustness level while asking for linear convergence with rate $\rho_{{\rm GD}, \epsilon}$ or faster is obtained by solving
\begin{equation}
\label{general_gd-problem}
\argmin_{\alpha \in (0,2/L)} R_{GD}(\alpha)  \quad \mbox{subject to} \quad \rho_{GD}(\alpha) \leq \rho_{{\rm GD}, \epsilon},
\end{equation}%
where $\rho_{GD}(\alpha)$ is given in \eqref{eq:rho_GD}. The function $\rho_{GD}(\alpha)$ is convex and piecewise linear in $\alpha$ over the interval $[0,2/L]$ with a unique minimum at $\bar{\rho}_{GD}$ and it satisfies $\rho_{GD}(0) = \rho_{GD}(2/L)=1$ on the boundary points. Therefore, it follows from this property that, given $\epsilon\in (0, \tfrac{2}{\kappa-1})$, there are exactly two $\alpha_\epsilon>0$ values such that $\rho_{GD}(\alpha_\epsilon)=\rho_{{\rm GD}, \epsilon}$ which we can explicitly compute as $\alpha_\epsilon=\frac{2-\epsilon(\kappa-1)}{L+\mu}$ or $\alpha_\epsilon=\frac{2 + \epsilon(\kappa-1)/\kappa}{L+\mu}$. The former value is strictly smaller as $\varepsilon>0$ and $\kappa>1$ here. From the formula \eqref{R_GD}, we have $R_{GD}(\alpha_\varepsilon) = \frac{L\alpha_\varepsilon^2 d}{2(1-\rho^2_{{\rm GD}, \epsilon})}$. Clearly one should select the smaller $\alpha_\epsilon$ value to minimize the robustness bound, i.e., a choice of $\alpha_\epsilon=\frac{2-\epsilon(\kappa-1)}{L+\mu}$ leads to $\rho_{{\rm GD}, \epsilon}$
rate with a robustness bound $R_{GD}(\alpha_\epsilon)$, i.e., $\mathcal{J}_{{\rm GD},\epsilon}\leq R_{GD}(\alpha_\epsilon)$.

For AG, we can also write an analogous optimization problem in order to trade rate with robustness:
\begin{align}\label{general_AG-problem}
\argmin_{\substack{\al, \be \geq 0, \tilde{P} \in \mathbb{S}_+^2\\ \rho, c_0, c \geq 0}}\{ R_{AG}(\alpha,\beta):\ \rho \leq \rho_{{\rm AG}, \epsilon},~c_0X_0+ {c}X(\rho) \succeq \Phi(A,B,\tilde{P} \otimes I_d,\rho)\} 
\end{align}%
with $X_0$, $X(\rho), R_{AG}$ defined in Corollary \ref{AG_mainresult_P2} {-- since we can scale $\tilde{P}$ with $c>0$, without loss of generality,} we restrict our attention to the case $c=1$ for treating the $c>0$ case.
This problem is in general non-convex and not easy to solve. Here, we consider two different ways to 
generate rate-robustness trade-off curves.

The first approach is similar to the one we used for GD.
In particular, consider Corollary~\ref{agd_main_bounds_2}, for $\alpha\in(0,1/L]$, choosing $\beta=\frac{1-\sqrt{\alpha\mu}}{1+\sqrt{\alpha \mu}}$ implies that $\rho_{AG}(\alpha)=\sqrt{1-\sqrt{\alpha\mu}}$. {We get $\rho_{AG}(\alpha_\epsilon)=\rho_{{\rm AG}, \epsilon}$ for} 
{
\begin{align}
\label{eq:alpha_eps}
\alpha_{\epsilon}=[1-\rho^2_{{\rm AG}, \epsilon}]^2/\mu=[1-(1+\epsilon)^2(1-\frac{1}{\sqrt{\kappa}})]^2/\mu, 
\end{align}}%
with $\epsilon\in \big[0, \sqrt{\frac{\sqrt{\kappa}}{\sqrt{\kappa}-1}}-1\big)$ to make sure the rate is smaller than $1$.
Thus, choosing $(\alpha,\beta)=(\alpha_\epsilon, \beta_\epsilon)$ with $\beta_\epsilon\triangleq\frac{1-\sqrt{\alpha_\epsilon\mu}}{1+\sqrt{\alpha_\epsilon \mu}}$ guarantees the rate $\rho_{{\rm AG}, \epsilon}$. In addition, Corollary~\ref{agd_main_bounds_2} implies the
robustness bound $R_{AG}(\alpha_\epsilon)=\mathcal{O}(\frac{\sqrt{\alpha_\epsilon}d}{\sqrt{\mu}})$ for this case.

For the second approach, 
we first grid the $(\alpha,\beta)\in(0,\tfrac{2}{L}]\times [0,1]$ parameter space, and use a numerical approach to find the best parameters for each $\epsilon$, {i.e., we solve a low-dimensional (in $\mathbb{R}^4$) convex semi-definite programming (SDP) problem for each possible $(\alpha,\beta)$ value from the grid, and we will pick the best one to determine the robustness bound. The SDPs arise from a convex approximation to the problem \eqref{general_AG-problem} as we now elaborate further.} First, we note that
$R_{AG}(\alpha,\beta)$ defined in \eqref{R_AG} is not convex in $\tilde{P}$; therefore, to obtain a tractable problem, we replace $R_{AG}(\alpha,\beta)$ with a convex upper bound. In particular, using Proposition~\ref{AG_mainresult}, it is straightforward to see
$$R_{AG}(\alpha,\beta) \leq \bar{R}_{AG}(\alpha,\beta)\triangleq\frac{\alpha^2 d}{2(1-\rho^2)}(L+2\tilde{P}_{11})$$
whenever there exists $\rho\in(0,1)$, $\tilde{P}\in\mathbb{S}_+^2$ and $\bar{c}\geq 0$ such that\footnote{Recall that we put $c=1$ in this section.}
\begin{align}
\label{eq:LMI_2}
\bar{c}X_0+X(\rho)\succeq\Phi(A,B,\tilde{P}\otimes I_d,\rho).
\end{align}
Note given $\epsilon\in[0, \sqrt{\frac{\sqrt{\kappa}}{\sqrt{\kappa}-1}}-1)$, setting $\rho=\rho_{{\rm AG}, \epsilon}$
within the matrix inequality in \eqref{eq:LMI_2}, we get a linear matrix inequality in $\bar{c}\geq 0$ and $\tilde{P}\in\mathbb{S}^2_+$ for fixed $(\alpha,\beta)$. Moreover, $\bar{R}_{AG}(\alpha,\beta)$ is linear in $\tilde{P}$. Hence, given the trade-off parameter $\epsilon>0$, we will approximately solve
{
\begin{align}\label{eq-rbar-eps}
\bar{\mathcal{R}}(\epsilon)\triangleq\min_{\al, \be,\bar{c} \geq 0, \tilde{P} \in \mathbb{S}_+^2}\{ \bar{R}_{AG}(\alpha,\beta):~\bar{c} X_0+ X({\rho_{{\rm AG}, \epsilon}}) \succeq \Phi(A,B,\tilde{P} \otimes I_d,\rho_{{\rm AG}, \epsilon})\} 
\end{align}}%
with $X_0$ and $X(\rho)$ defined in Corollary \ref{AG_mainresult_P2}, and $\bar{R}_{AG}$ as given above. In fact, for a fixed $(\alpha,\beta)$, this is a {small dimensional} convex SDP problem and can be solved easily.

Thus, we first grid the AG parameter space, i.e., $\{(\alpha_{i_1},\beta_{i_2})\}_{i_1\in\mathcal{I}_1,i_2\in\mathcal{I}_2}$ and for given trade-off parameter $\epsilon$, we solve $|\mathcal{I}_1| |\mathcal{I}_2|$ many 4-dimensional SDPs, i.e., for each $(i_1, i_2)\in\mathcal{I}_1\times \mathcal{I}_2$, 
\begin{align}\label{eq-rbar-grid}
\bar{\mathcal{R}}_{i_1,i_2}(\epsilon)\triangleq\min_{\bar{c}\geq 0, \tilde{P}\in\mathbb{S}^2_+} &\bar{R}_{AG}(\alpha_{i_1},\beta_{i_2})=\frac{\alpha_{i_1}^2 d}{2(1-\rho_{{\rm AG}, \epsilon}^2)}(L+2\tilde{P}_{11})\\
{\rm s.t.}& \quad \bar{c}X_0+X(\rho_{{\rm AG}, \epsilon})\succeq\Phi(A,B,\tilde{P}\otimes I_d,\rho_{{\rm AG}, \epsilon}).\nonumber
\end{align}
Clearly, $\mathcal{J}_{{\rm AG},\epsilon}\leq \bar{\mathcal{R}}(\epsilon)\leq \min_{i_1\in\mathcal{I}_1,i_2\in\mathcal{I}_2}\bar{\mathcal{R}}_{i_1,i_2}(\epsilon)$ where $\bar{\mathcal{R}}(\epsilon)$ is as in \eqref{eq-rbar-eps}. Note $(\alpha,\beta)=(\alpha_\epsilon, \beta_\epsilon)$, $\bar{c}=0$ and $\tilde{P}=\tilde{P}(\alpha_\epsilon)$ satisfies \eqref{eq:LMI_2} where $\alpha_\epsilon$ is given in \eqref{eq:alpha_eps},  $\beta_\epsilon=\frac{1-\sqrt{\alpha_\epsilon\mu}}{1+\sqrt{\alpha_\epsilon \mu}}$ and $\tilde{P}(\alpha)$ is defined in \eqref{Nest_GeneralRate}. Therefore, 
for any grid that contains $(\alpha_\epsilon, \beta_\epsilon)$ as one of the grid points, we have
\begin{align}
\label{eq:R_bound}
\bar{\mathcal{R}}(\epsilon)\leq \frac{\alpha_\epsilon^2 d}{2\left(1-\rho_{{\rm AG}, \epsilon}^2\right)}\left(L+\frac{1}{\alpha_\epsilon}\right)\leq \frac{\alpha_\epsilon d}{\left(1-\rho_{{\rm AG}, \epsilon}^2\right)}
=\frac{\sqrt{\alpha_\epsilon}d}{\sqrt{\mu}},
\end{align}
where the first inequality follows from $\tilde{P}_{11}=\tfrac{1}{2\alpha_\epsilon}$ for  $\tilde{P}=\tilde{P}(\alpha_\epsilon)$ and the second inequality follows from $\alpha_\epsilon\in(0,1/L]$. The equality above is a direct consequence of the identity \eqref{eq:alpha_eps}. Finally, according to the discussion at the end of the previous section,
for $\epsilon>0$ sufficiently close to $\sqrt{\frac{\sqrt{\kappa}}{\sqrt{\kappa}-1}}-1$, there exists a quadratic $f\in\mathbb{S}_{\mu,L}$ such that $\mathcal{J}(\alpha_\epsilon,\beta_\epsilon)=\Theta(\frac{\sqrt{\alpha_\epsilon}d}{\sqrt{\mu}})$ while \eqref{eq:R_bound} implies that $\mathcal{J}_{{\rm AG},\epsilon}\leq\bar{\mathcal{R}}(\epsilon)\leq \frac{\sqrt{\alpha_\epsilon}d}{\sqrt{\mu}}$.

In Figure \ref{AG_GD_SC}, we consider the rate-robustness trade-off for a strongly convex function with {$\mu=1$} and $L=20$ using the upper bounds we derived in this section for both GD and AG (that are applicable to any dimension $d$), where we report the normalized robustness level $\mathcal{J}/d$ in the $y$-axis versus the convergence rate in the $x$-axis -- for AG we plotted two curves associated with the two approaches detailed above: the first one (marked in red color) uses the explicit bound and 
the second one (marked in yellow color) is based on a grid search. We use the solver CVX \cite{cvx} to solve the 4-dimensional SDPs given in \eqref{eq-rbar-grid}. We select a grid of size $|I_1|=|I_2| = 30$ on the parameter space for $(\alpha,\beta)$ determined by Proposition \ref{prop-stability-set-ag}. We observe that the grid search and the explicit upper bound approach give similar results on this numerical example for AG, especially when the rate is close to 1. We also see that the robustness upper bound for GD obtained from our approach (marked in blue color) is worse than the upper bound we developed for AG. To see how the AG bounds are comparable with the exact expressions we developed for quadratics, first we note that by Lemma \ref{lem-H2-cvx}, any quadratic function \Sml~admits the robustness upper bound\footnote{This is a worst-case bound for a quadratic \Sml, tighter bounds are available if all the eigenvalues of its Hessian matrix were known or estimated beyond the eigenvalues $\mu$ and $L$, see Proposition \ref{prop-h2-nesterov}.} 
$$\mathcal{J}_{max}(\alpha,\beta) := (d-1)\max \left[ \uab(\mu), \uab(L)\right] + \min \left[ \uab(\mu), \uab(L)\right]$$
and this bound can be achieved for some choices of $f$. For large $d$, we have clearly $ \mathcal{J}_{max}(\alpha,\beta) / d \approx \max \left[ \uab(\mu), \uab(L)\right]$. In Figure \ref{AG_GD_SC}, we plot the latter quantity versus the convergence rate (marked in purple color) to demonstrate the rate-robustness curve for AG in the case of quadratic objective functions. We observe from Figure \ref{AG_GD_SC} that our bounds for the quadratic case are tighter than those for general strongly convex functions as expected.
\begin{figure}[ht!]
  \centering
    \includegraphics[width=0.4\textwidth]{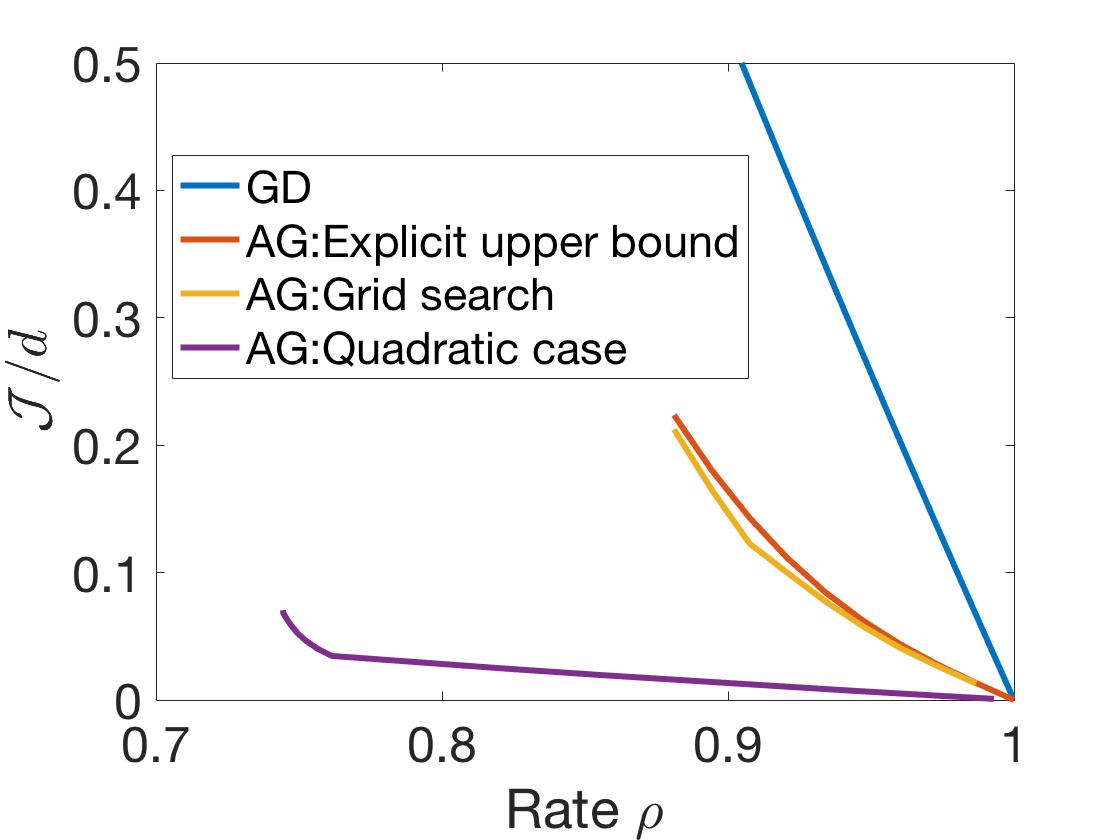}
  \caption{Rate-robustness trade-off for GD and AG algorithms using derived upper bounds and comparing it with the quadratic result.\label{AG_GD_SC}}
\end{figure}

\section{Asymptotic stability of the optimum with respect to perturbations}\label{sec:stability}
Our discussion so far has focused on the robustness  of first-order methods with respect to random noise in the gradients, which we quantify by $\mathcal{J}$ defined in \eqref{eq:robust_def}. Our robustness measure $\mathcal{J}$ is based on the $H_2$ norm of an associated linear dynamical system. It is well known that the $H_2$ norm of a dynamical system is closely related to the \emph{asymptotic stability} of the equilibrium (which is characterized by the optimal solution $x^*$ to \eqref{main-opt-prob} in our setup) in the sense that it quantifies how quickly the system can converge back to the equilibrium 
if it is unsettled from its equilibrium in the direction of a coordinate 
\cite{zhou1996robust}.
More specifically, for each $i\in\{1,\ldots, d\}$, let $\{x_k^i\}_{k\geq 0}$ be the iterate sequence corresponding to \eqref{noisy-dyn_sys2:main} whenever $\{w_k\}_k = \delta[k] e_i$ for $k\geq 0$ where $e_i$ is the $i$-th basis vector, i.e., we 
perturb the system from its equilibrium with an impulse input in the direction of 
$e_i$. 
Let
\begin{equation}\label{h2_impulse}
\mathcal{J}_* := \sum_{i=1}^d \| x^i - x^* \|_2^2 
\end{equation}
where $\| x^i - x^* \|_2$ is the $l_2$ norm of the sequence $\{x_k^i - x^*\}_k$. It is worth noting that 
$\{x_k^i\}_k$ is the same as the iterate sequence of the noiseless system \eqref{dyn_sys:main} with initial state 
$\xi^*+Be_i$, $D=0_d$ and $\phi(\cdot)=\nabla f(\cdot)$.

We next motivate this definition from another perspective. Recall the alternative robustness measure $\mathcal{J}'=\limsup_k \E[\|x_k-x^*\|^2]$ we briefly discussed in Section~\ref{sec-dyn-system-intro}; 
for a linear system it is known that $\mathcal{J'} =  \mathcal{J}_*$ \cite{zhou1996robust}. In other words, our explicit formulas and bounds for $\mathcal{J'}$, given in Appendix~\ref{iterate_case} of the Supplementary Material, translate immediately to $\mathcal{J}_*$ for optimizing {\it quadratic} functions. The definition in \eqref{h2_impulse} also extends to the case when $f$ is not necessarily quadratic or equivalently when the system \eqref{noisy-dyn_sys2:main} is nonlinear; however, for non-linear systems there is no known explicit formula that relates $\mathcal{J'}$ to $\mathcal{J}_*$ \cite{Stoorvogel93}. We refer to the quantity $\mathcal{J}_*$ as \emph{perturbation stability} of the first-order algorithm in consideration; 
{indeed, it measures how sensitive the underlying optimization algorithm is to the initialization around the optimal solution --- it also quantifies how strongly the iterate sequence is attracted to the optimal solution once they are close.} 

In particular, given $A$, $B$, and $C$ defining the first-order optimization algorithm, we assume there exists $X\in\mathbb{S}^{m+d}$ such that the MI in \eqref{eq:LMI} holds for some $P\in\mathbb{S}^m_+$ and $\rho\in(0,1)$. Moreover, we assume that when $\Gamma=0$ and $\sigma=0$, there exists a constant $c\geq 0$ independent of $\xi_0$ such that the same $\rho$ and $X$ satisfy the dissipation inequality in \eqref{eq:energy_dissipation} for every $k \geq 0$; hence, it follows from \eqref{eq:LMI} and \eqref{eq:energy_dissipation} along with \eqref{storage_contraction} that
{
\begin{align}
\label{eq:general_result-impulse}
    V_{P,c}(\xi_k)\leq \rho^{2k} V_{P,c}(\xi_0),\quad \forall~k\geq 0.
\end{align}}%
Since \Sml, whenever $P\in\mathbb{S}^m_{+}$ and/or $c>0$, \eqref{eq:general_result-impulse} implies that the error signal $\{\|x_k^i-x_*\|^2\}_k$ decays geometrically and is therefore summable; and as a consequence, $\mathcal{J}_*$ in \eqref{h2_impulse} well-defined.
\begin{lemma}
For GD, the following bound holds for all $\alpha\in(0,2/L)$
{
\begin{align}
\label{eq:GD_impulse}
    \mathcal{J}_*(\alpha)\leq \frac{\alpha^2d}{1-\rho^2_{GD}(\alpha)},
\end{align}}%
where $\rho_{GD}(\cdot)$ is defined in \eqref{eq:rho_GD}. Moreover, for AG, given  $\alpha\in(0,1/L]$, setting $\beta(\alpha)=\frac{1-\sqrt{\alpha\mu}}{1+\sqrt{\alpha\mu}}$, the perturbation stability can be bounded as $\mathcal{J}_*(\alpha)\leq \frac{\alpha^2d}{\sqrt{\alpha\mu}}(1+\kappa)$.%
\end{lemma}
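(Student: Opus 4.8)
The plan is to obtain both bounds from the deterministic contraction \eqref{eq:general_result-impulse}, namely $V_{P,c}(\xi_k)\le \rho^{2k}V_{P,c}(\xi_0)$, which holds once the matrix inequality \eqref{eq:LMI} and the dissipation inequality \eqref{eq:energy_dissipation} are available in the noiseless regime $\sigma=0$, $\Gamma=0$. The starting observation is the one recorded just before \eqref{h2_impulse}: the impulse trajectory $\{x_k^i\}_k$ coincides with the noiseless trajectory of \eqref{noisy-dyn_sys2:main} initialized at $\xi_0-\xi^*=Be_i$. Hence $x_0^i-x^*=TBe_i=-\alpha e_i$, so $\|x_0^i-x^*\|^2=\alpha^2$, and $V_{P,c}(\xi_0^i)=(Be_i)^\top P(Be_i)+c\,(f(x^*-\alpha e_i)-f^*)$. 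With this setup, the two remaining ingredients are (i) a lower bound of the form $V_{P,c}(\xi_k)\ge \theta\,\|x_k-x^*\|^2$ for a suitable $\theta>0$, and (ii) an upper bound on the initial value $V_{P,c}(\xi_0^i)$; summing the geometric series $\sum_{k\ge 0}\rho^{2k}=1/(1-\rho^2)$ and then over $i=1,\dots,d$ in \eqref{h2_impulse} produces $\mathcal{J}_*$.

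For GD I would take $P=p\,\id$ and $c=0$ with $\rho=\rho_{GD}(\alpha)$ as in \eqref{eq:rho_GD}: Proposition~\ref{general_gd_cont} (via \cite{lessard2016analysis}) guarantees that \eqref{eq:LMI} holds for some $p>0$, while the relation \eqref{GD_X} evaluated at $x=\xi_k$, $y=x^*$ gives \eqref{eq:energy_dissipation} with $c=0$. Since $T=\id$ for GD, here $V_P(\xi_k)=p\|x_k-x^*\|^2$ exactly, so both (i) and (ii) are exact and $p$ cancels: $\|x_k^i-x^*\|^2=V_P(\xi_k^i)/p\le \rho^{2k}\|Be_i\|^2=\rho^{2k}\alpha^2$. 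Summing over $k$ and $i$ yields $\mathcal{J}_*(\alpha)\le \alpha^2 d/(1-\rho_{GD}^2(\alpha))$, which is the stated GD bound.

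For AG I would instantiate the certificate of Corollary~\ref{agd_main_bounds_2}: for $\alpha\in(0,1/L]$ and $\beta(\alpha)=\tfrac{1-\sqrt{\alpha\mu}}{1+\sqrt{\alpha\mu}}$, take $c_0=0$, $c=1$, $\rho=\rho_{AG}(\alpha)=\sqrt{1-\sqrt{\alpha\mu}}$ and $P=\tilde{P}(\alpha)\otimes \id$ with $\tilde{P}(\alpha)$ from \eqref{Nest_GeneralRate}, so that the MI (via \cite{aybat2019universally}) and, through Lemma~\ref{stronglyconvex_nest_lem}, the dissipation both hold. The key difference from GD is that $\tilde{P}(\alpha)$ is rank one, so $V_P$ alone does not control $\|x_k-x^*\|^2$; this is precisely why one must keep $c>0$ and route the lower bound through the function value, using strong convexity and $\nabla f(x^*)=0$ in \eqref{ineq_S}: $V_{P,1}(\xi_k)\ge f(x_k)-f^*\ge \tfrac{\mu}{2}\|x_k-x^*\|^2$, i.e. $\theta=\mu/2$. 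For the initial value I would combine $\tilde{P}_{11}=\tfrac{1}{2\alpha}$ from \eqref{Nest_GeneralRate} with smoothness, $f(x^*-\alpha e_i)-f^*\le \tfrac{L}{2}\alpha^2$, to get $V_{P,1}(\xi_0^i)\le \tfrac{\alpha}{2}(1+\alpha L)$. Inserting (i)–(ii), using $1-\rho_{AG}^2(\alpha)=\sqrt{\alpha\mu}$, and summing over $i$ gives a bound of order $\tfrac{d}{\mu}\cdot\tfrac{1}{\sqrt{\alpha\mu}}\cdot\alpha(1+\alpha L)$, which one then rewrites with $\alpha\le 1/L$ into the stated closed form.

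The step I expect to be delicate is exactly this last one for AG: because $\tilde{P}(\alpha)$ is singular, the estimate cannot be read off from $V_P$ and must pass through the weaker inequality $V_{P,1}\ge \tfrac{\mu}{2}\|\cdot\|^2$, and it is in this passage that the final constant (in particular the $(1+\kappa)$ factor) is pinned down. A sanity check I would run in parallel is the quadratic model $f(x)=\tfrac12(x-x^*)^\top Q(x-x^*)$ with $Q=\mu\,\id$, where $\mathcal{J}_*$ coincides with $\mathcal{J}'$ for the linear system: the scalar recursion $e_{k+1}=(1-\alpha\mu)\big[(1+\beta)e_k-\beta e_{k-1}\big]$ has a double characteristic root at $1-\sqrt{\alpha\mu}$, so its impulse response carries a $(1+k)$ overshoot, and evaluating $\sum_k e_k^2$ in closed form fixes the exact dependence on $\alpha$, $\mu$, and $\kappa$, thereby calibrating the constant in the final expression.
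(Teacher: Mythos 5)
Your GD argument is correct and is exactly the paper's proof: Proposition~\ref{general_gd_cont} with $\sigma=0$ gives $\|x_k^i-x^*\|^2\le\rho_{GD}^{2k}(\alpha)\,\alpha^2$, and summing the geometric series over $k$ and then over $i$ yields \eqref{eq:GD_impulse}.

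The AG half has a genuine gap, and it sits exactly at the step you flagged as delicate. With your initialization $\xi_0-\xi^*=Be_i$ (so $x_0^i=x^*-\alpha e_i$ but $x_{-1}^i=x^*$), the rank-one certificate $\tilde P(\alpha)$ of \eqref{Nest_GeneralRate} gives $V_P(\xi_0^i)=\tilde P_{11}\alpha^2=\alpha/2$, and your chain then produces $\mathcal{J}_*(\alpha)\le \frac{d\,\alpha(1+\alpha L)}{\mu\sqrt{\alpha\mu}}=\Theta(\sqrt{\alpha})$. The final ``rewrite using $\alpha\le 1/L$'' into $\frac{\alpha^2 d(1+\kappa)}{\sqrt{\alpha\mu}}$ would require $\frac{\alpha(1+\alpha L)}{\mu}\le \alpha^2(1+\kappa)$, i.e.\ $\alpha\mu\ge 1$, which on $(0,1/L]$ holds only in the degenerate case $\mu=L$, $\alpha=1/L$. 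The failure is not a constant: the stated bound is $\Theta(\alpha^{3/2})$ while yours is $\Theta(\alpha^{1/2})$, and the $\alpha^{3/2}$ scaling is the entire point of the lemma (it is what makes AG's perturbation stability beat GD's $\mathcal{O}(\alpha)$ for small $\alpha$). Moreover, your own scalar sanity check shows that under your initialization no certificate can close this gap: for $Q=\mu\id$ the characteristic root is the double root $1-\sqrt{\alpha\mu}$, and with $e_{-1}=0$, $e_0=-\alpha$ the impulse response is $e_k=-\alpha(1+k)(1-\sqrt{\alpha\mu})^k$, whose energy $\sum_k e_k^2$ is $\Theta(\sqrt{\alpha}/\mu^{3/2})$ --- strictly larger than the claimed $\Theta(\alpha^{3/2}/\sqrt{\mu})$ bound for small $\alpha$. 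So with the initial state $\xi^*+Be_i$ the stated AG bound is not merely hard to prove this way; it is false.

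What the paper's proof actually uses --- stated in its parenthetical ``where we used $x_0^i=x_{-1}^i=x^*+Be_i$,'' in some tension with the remark preceding \eqref{h2_impulse} --- is the doubly-perturbed initialization $x_0^i=x_{-1}^i=x^*-\alpha e_i$, i.e.\ \emph{both} components of the state perturbed. Under that convention the rank-one structure works for you rather than against you: writing $\tilde P(\alpha)=vv^\top$ with $v=\bigl(\sqrt{1/(2\alpha)},\,\sqrt{\mu/2}-\sqrt{1/(2\alpha)}\bigr)^\top$, one has $v_1+v_2=\sqrt{\mu/2}$, so equal state components give $V_P(\xi_0^i)=\frac{\mu}{2}\|x_0^i-x^*\|^2=\frac{\mu\alpha^2}{2}$ instead of $\frac{\alpha}{2}$. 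Then $V_{P,1}(\xi_0^i)\le\frac{\alpha^2}{2}(\mu+L)$, and your own chain $\frac{\mu}{2}\|x_k^i-x^*\|^2\le f(x_k^i)-f^*\le\rho_{AG}^{2k}(\alpha)\,V_{P,1}(\xi_0^i)$ with $1-\rho_{AG}^2(\alpha)=\sqrt{\alpha\mu}$ yields $\|x_k^i-x^*\|^2\le\rho_{AG}^{2k}(\alpha)\,\alpha^2(1+\kappa)$, which sums to exactly $\frac{\alpha^2 d(1+\kappa)}{\sqrt{\alpha\mu}}$. In the scalar double-root model this corresponds to $e_k=-\alpha(1+k\sqrt{\alpha\mu})(1-\sqrt{\alpha\mu})^k$, whose energy is indeed $\Theta(\alpha^{3/2}/\sqrt{\mu})$, confirming that the lemma's AG bound is correct under, and only under, the paper's initialization convention.
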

\begin{proof}
Recall that $\{x_k^i\}_k$ is the same as the iterate sequence of the noiseless system \eqref{dyn_sys:main} with initial state 
$\xi^*+Be_i$. Hence,  Proposition~\ref{general_gd_cont} with $\sigma=0$ implies that
{
\begin{equation}
\label{eq:dterministic_rate}
\norm{x^i_{k}-x^*}^2 \leq \rho^{2k} \norm{x^i_0-x^*}^2,\quad \forall\ k\geq 0.
\end{equation}
for some $\rho\in(0,1)$ and for any $1 \leq i \leq d$ and stepsize $\alpha\in(0,2/L)$.
Thus, $\sum_{k=0}^\infty\norm{x^i_k-x^*}^2\leq \frac{1}{1-\rho^2}\norm{x^i_0-x^*}^2$,} which implies $\norm{x^i-x^*}^2\leq \frac{\alpha^2}{1-\rho^2}$ for all $i=1,\ldots, d$ since $B=-\alpha I_d$ for GD. Therefore, we have $\mathcal{J}_*(\alpha)\leq \frac{\alpha^2d}{1-\rho^2}$.
Moreover, given any stepsize $\alpha\in(0,2/L)$ for GD, using \eqref{eq:rho_GD}, which is the smallest $\rho$ value for which \eqref{eq:dterministic_rate} holds, we obtain \eqref{eq:GD_impulse}. On the other hand, for AG, using Corollary~\ref{agd_main_bounds_2} with $\sigma=0$ and the fact that $\frac{\mu}{2}\norm{x_k-x^*}^2\leq f(x^k)-f^*$, we get $\norm{x^i_k-x^*}^2\leq \rho_{AG}^{2k}(\norm{x_0^i-x^*}^2+\tfrac{2}{\mu}(f(x^i_0)-f^*))$ for $k\geq 0$ and $i=1,\ldots, d$, where we used $x_0^i=x_{-1}^i=x^*+Be_i$ for $i=1,\ldots,d$. Thus,
\begin{align*}
    \mathcal{J}_*(\alpha)\leq \frac{1}{1-\rho_{AG}^2(\alpha)}\Big(\Tr(B^\top B)+\sum_{i=1}^d \tfrac{2}{\mu}(f(x^i_0)-f^*))\Big)\leq \frac{\alpha^2d}{\sqrt{\alpha\mu}}(1+\kappa),\quad \forall~\alpha\in(0,1/L].
\end{align*}
\end{proof}
For $\alpha\in(0,1/L]$ since $\rho_{GD}(\alpha)=1-\alpha\mu$, \eqref{eq:GD_impulse} implies that $\mathcal{J}_*(\alpha)\leq \frac{\alpha^2 d}{\alpha\mu(2-\alpha\mu)}=\mathcal{O}(\alpha)$ for GD. For quadratic \Sml~such that $\mu=L$, 
{as we discussed in Footnote \ref{J_GD_iterates},}
$J'(\alpha)=\frac{\alpha d}{\mu(2-\alpha\mu)}$. 
Since $J_*(\alpha)=J'(\alpha)$ for quadratics, 
$\mathcal{O}(\alpha)$ dependence of \eqref{eq:GD_impulse} for $\alpha\in(0,1/L]$ is tight.
We also note that, $\mathcal{O}(\alpha^{3/2})$ bound on $\mathcal{J}_*(\alpha)$ for AG implies that for sufficiently small stepsize $\alpha$, AG possesses better perturbation stability than GD.
\section{Numerical Experiments}
Our first set of experiments concern a further study of Example \ref{example} for comparing AG and GD in terms of performance. 
In the leftmost plot of Figure \ref{numeric_1}, we vary the trade-off parameter from $\tau=0$ to $\tau=\infty$ for AG and plot the robustness level $\mathcal{J}(\alpha_*(\tau),\beta_*(\tau))$ versus the rate $\rho(\alpha_*(\tau),\beta_*(\tau))$ corresponding to the optimal parameters $(\alpha_*(\tau),\beta_*(\tau))$, we also plot the analogous curve for GD (the same curve from Figure \ref{fig-GD}) to compare. We observe that for the same achievable convergence rate, the optimized AG parameters lead to more robust algorithms compared to the optimized GD algorithms as AG has an additional parameter $\beta$ to optimize robustness over. This shows that AG can improve GD in terms of both convergence rate and robustness at the same time when gradients are subject to white noise. This result is in contrast with the deterministic gradient error setting in \cite{devolder2014first}, 
which shows that GD performance degrades gracefully while AG may accumulate error.
Therefore, our results suggest that AG algorithms can tolerate random noise better than deterministic noise to preserve their accelerated rates, which is also inline with the theoretical findings of \cite{CGZ-momentum-wass}. 
Also it is interesting to note that the popular choice of parameters (blue and red dots), as well as the parameters that lead to the optimal (fastest) rate (green and purple dots) lie on curves that trade robustness with rate in an optimal fashion.
\begin{figure}[ht!]
  \centering
  \begin{minipage}[t]{0.31\textwidth}
    \includegraphics[width=\textwidth]{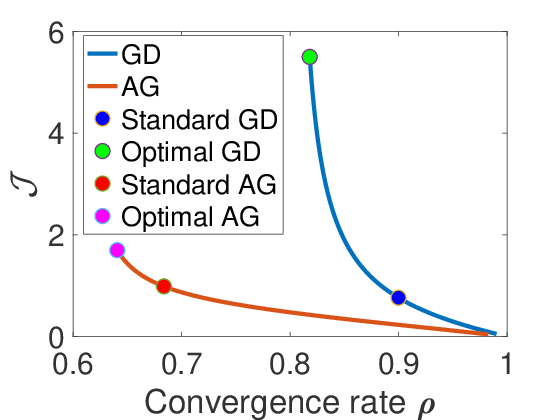}
  \end{minipage}%
  ~
  \begin{minipage}[t]{0.31\textwidth}
    \includegraphics[width=\textwidth]{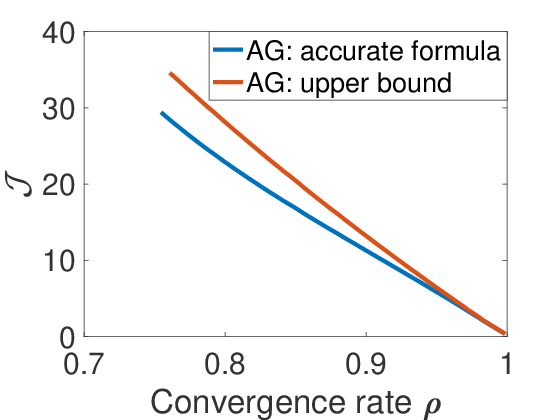}
  \end{minipage}%
  ~
  \begin{minipage}[t]{0.31\textwidth}
    \includegraphics[width=\textwidth]{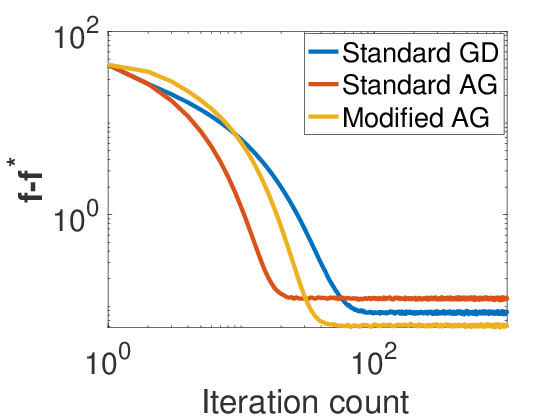}
  \end{minipage}
  \caption{\label{numeric_1}Left: Robustness $\mathcal{J}$ as a function of the convergence rate for GD and AG. Middle: Comparison of the convergence rate and robustness obtained by solving \eqref{opt-pbm-ag} versus its approximation \eqref{opt-pbm-ag-ub} for $d=100$. Right: Tuned AG can be both faster and more robust than GD.}
\end{figure}


Next, we illustrate the tightness of our upper bound $\bar{\mathcal{J}}(\alpha,\beta)$ provided in \eqref{eqn-h2sq-acc-grad-ub} to the (true) robustness level $\mathcal{J}(\alpha,\beta)$. This upper bound results in a small scale optimization problem \eqref{opt-pbm-ag-ub} that allows trading-off robustness and the convergence rate in a way that computationally tractable, even in high dimensions. 
The middle plot of Figure \ref{numeric_1} shows the convergence rate and robustness obtained by solving \eqref{opt-pbm-ag} versus solving \eqref{opt-pbm-ag-ub}. The objective is a random quadratic function in dimension $d=100$ with parameters $\mu=0.1, L=1$. Our results show that for any trade-off parameter $\tau$ our upper bound is within a factor of $1.2$ of true parameters, illustrating the accuracy of this approximation to the optimal parameters for different levels of robustness, especially the approximation is more accurate when the trade-off parameter is larger (in which case the convergence rate is closer to 1). We obtain quantitatively similar results 
repeating this experiment with other randomly generated quadratic functions.

Next, we illustrate our framework to trade-off robustness and convergence rate on a quadratic optimization problem, similar to the one considered in \cite{Hardt-blog} where it is shown that AG algorithms with standard choice of parameters have difficulty to handle random gradient noise. We consider the quadratic  function $f(x) = \frac{1}{2}x^\top Q x + b^T x + \delta \|x\|^2 $ in dimension $d=100$ where $Q$ is the Laplacian of a cyclic graph, $\delta=0.1$ is a regularization parameter to make the problem strongly convex and $b$ is a random vector. As it can be seen in the rightmost plot of Figure \ref{numeric_1}, we show that when properly modified, AG can be both faster and more robust in comparison with GD. 

\begin{figure}[t!] 
  \centering
  \begin{minipage}[t]{0.475\textwidth}
    \includegraphics[width=\textwidth]{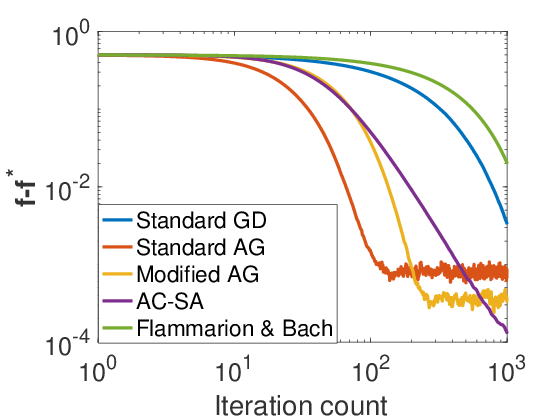}
  \end{minipage}%
 ~
    \begin{minipage}[t]{0.47\textwidth}
    \includegraphics[width=\textwidth]{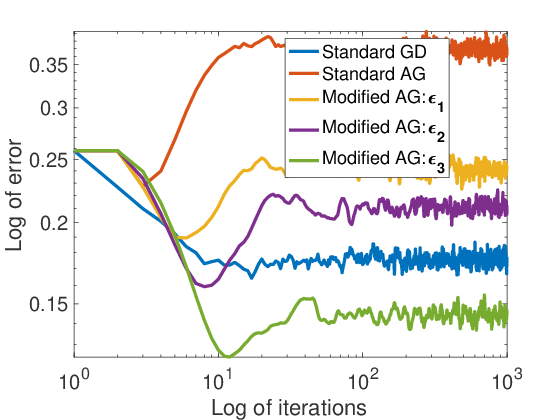}
  \end{minipage}
  \caption{\label{fig-Compare} Left: Comparison of the other algorithms with Modified AG. Right: Tuned AG for the regularized logistic regression problem.}
\end{figure}
In the leftmost plot of Figure \ref{fig-Compare}, we compare the tuned AG with 
other algorithms 
such as AC-SA \cite{Lan2012} and the Flammarion-Bach algorithm \cite{flammarion2015averaging}. For this purpose, we consider the same quadratic test problem from \cite{flammarion2015averaging} in dimension $d=20$, where the eigenvalues of its Hessian $Q$ are set equal to $\lambda_i = i^2$ for $i=1,2,...,20$. Our results show that modified AG can trade robustness with the convergence rate successfully and can improve upon AC-SA and Flammarion-Bach algorithm on this example. 

Finally, we validate our results for strongly convex and smooth functions by choosing function $f$ to be a regularized logistic loss. We synthesize a random matrix $M \in \mathbb{R}^{2000\times 100}$ and a random vector $w\in \R^{100}$ and compute $y= \mbox{sign}(Mw)$ as the output of the classifier. The goal is to recover $w$ using regularized logistic regression when the gradient of the logistic loss 
is corrupted with additive Gaussian noise. The plot on the right panel of Figure \ref{fig-Compare} shows the behavior of tuned AG after solving the optimization problem \eqref{general_AG-problem} with three different $\epsilon$ values $0<\epsilon_1 < \epsilon_2 < \epsilon_3$ in comparison with standard $AG$ and $GD$. 
As predicted by our theory, AG performs better than GD and the asymptotic suboptimality decreases as $\varepsilon$ gets larger. 

\section{Conclusion} 	
We consider the gradient descent (GD) and accelerated gradient (AG) methods for optimizing strongly convex functions. We developed a computationally tractable framework to design their parameters in a way to trade between two conflicting performance measures: the convergence rate and the robustness to additive white noise in the gradient computations measured in terms of final asymptotic variance of the algorithm output. For strongly convex quadratics, we show that 
this robustness measure is equal to the $H_2$ norm of a  dynamical system associated to the optimization algorithm and give an explicit characterization of this quantity. Our results show that for the same achievable rate, AG can always be tuned to be more robust. Similarly, for the same robustness level, we show that AG can be tuned to be always faster than GD. We also give fundamental lower bounds on the achievable robustness level for gradient descent for a given achievable rate. We show how our analysis can be extended to smooth strongly convex functions and we derive upper bounds on the robustness measures for GD and AG.
\section{Acknowledgments}
The work of Necdet Serhat Aybat is partially supported by NSF Grant CMMI-1635106. Alireza Fallah is partially supported by Siebel Scholarship. Mert G\"urb\"uzbalaban acknowledges support from the
grants NSF DMS-1723085 and NSF CCF-1814888.
\bibliographystyle{plain}
\bibliography{main}
\newpage
\begin{appendix}
\section{First-order optimality conditions for the objective \texorpdfstring{$F_\tau(\alpha)$}{Lg}}
\begin{proposition}\label{prop-gd-optimality}
There exists an optimizer $\alpha_*(\tau)$ to the minimization problem \eqref{opt-pbm-gd}. Furthermore, any optimizer is either $\alpha_*(\tau) = 2/(\mu+L)$ or it satisfies one of the following two conditions:
{
\begin{numcases}{}
\frac{\alpha^2}{2} \sum_{i=1}^d{\frac{1}{(2-\alpha \lambda_i)^2}} + \frac{\tau(\alpha \mu-1)}{\mu(2-\alpha \mu)^2}=0 \quad \mbox{and} \quad |1-\al \mu| > |1-\al L|. \label{poly-eqn-gd_1}\\
\frac{\alpha^2}{2} \sum_{i=1}^d{\frac{1}{(2-\alpha \lambda_i)^2}} + \frac{\tau(\alpha L-1)}{L(2-\alpha L)^2}=0 \quad \mbox{and} \quad |1-\al \mu| < |1-\al L|. \label{poly-eqn-gd_2}
\end{numcases}}%
Therefore, by examining the values of $F$ at the points that satisfy this equality and inequality constraints, we can determine the optimal stepsize $\al^*(\tau)$.
\end{proposition}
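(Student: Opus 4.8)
The plan is to view the penalized problem \eqref{opt-pbm-gd} as the minimization of the scalar function $F_\tau(\alpha)=\mathcal{J}(\alpha)+\tau\,\frac{1}{1-\rho^2(\alpha)}$ over the open interval $(0,2/L)$, where $\mathcal{J}(\alpha)=\alpha\sum_{i=1}^d\frac{1}{2(2-\alpha\lambda_i)}$ is given in \eqref{GD_quad_main:b} and $\rho(\alpha)=\max\{|1-\alpha\mu|,|1-\alpha L|\}$ in \eqref{eq-rate}. First I would establish existence by coercivity, and then read off the optimality conditions from a piecewise-smooth analysis of $\rho$.

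For existence, note that $F_\tau$ is continuous on $(0,2/L)$ and blows up at both endpoints: as $\alpha\to 0^+$ we have $\rho(\alpha)\to 1$, so the penalty $\tau/(1-\rho^2(\alpha))\to\infty$; as $\alpha\to(2/L)^-$ the term $i=d$ of $\mathcal{J}(\alpha)$ diverges (since $2-\alpha L\to 0$) while $\rho(\alpha)=\alpha L-1\to 1$ forces the penalty to diverge as well. Hence $F_\tau$ is coercive on $(0,2/L)$ and attains its infimum at an interior point, yielding $\alpha_*(\tau)$. Convexity of $F_\tau$, already noted in the text, would additionally give uniqueness but is not needed for the characterization.

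Next I would pin down the smoothness structure. The map $\mathcal{J}$ is smooth on $(0,2/L)$ because $2-\alpha\lambda_i>0$ there, so all non-smoothness comes from $\rho$. Using $\mu\le L$ and the identity $1-\alpha\mu=\alpha L-1$ at $\bar\alpha\triangleq\frac{2}{\mu+L}$ (see \eqref{eqn-fast-stepsize}), I would show $\rho(\alpha)=1-\alpha\mu$ on $(0,\bar\alpha]$ and $\rho(\alpha)=\alpha L-1$ on $[\bar\alpha,2/L)$; thus the conditions $|1-\alpha\mu|>|1-\alpha L|$ and $|1-\alpha\mu|<|1-\alpha L|$ encode exactly lying to the left or right of $\bar\alpha$. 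A short check confirms that the kinks of $|1-\alpha\mu|$ at $1/\mu$ and of $|1-\alpha L|$ at $1/L$ are inactive (the maximizing branch is smooth across them), so $\bar\alpha$ is the unique interior point at which $F_\tau$ fails to be differentiable. Consequently any minimizer is either $\alpha_*=\bar\alpha$ or an interior stationary point of one of the two smooth pieces. On each piece I would compute $\mathcal{J}'(\alpha)=\sum_{i=1}^d\frac{1}{(2-\alpha\lambda_i)^2}$ and differentiate the penalty through $\frac{d}{d\alpha}\frac{1}{1-\rho^2}=\frac{2\rho\,\rho'}{(1-\rho^2)^2}$, inserting $\rho=1-\alpha\mu,\ \rho'=-\mu,\ 1-\rho^2=\alpha\mu(2-\alpha\mu)$ on the left piece and $\rho=\alpha L-1,\ \rho'=L,\ 1-\rho^2=\alpha L(2-\alpha L)$ on the right piece. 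Clearing $F_\tau'(\alpha)=0$ by the factor $\alpha^2/2$ turns these into \eqref{poly-eqn-gd_1} and \eqref{poly-eqn-gd_2} with their respective inequalities; multiplying through by $\prod_{i=1}^d(2-\alpha\lambda_i)^2$ together with $(2-\alpha\mu)^2$ (resp. $(2-\alpha L)^2$) then exhibits each condition as a polynomial root-finding problem (degree four in the planar case $d=2$ of Example \ref{example}). Enumerating these finitely many roots alongside $\bar\alpha$ and comparing the values of $F_\tau$ determines the optimizer.

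The main obstacle I anticipate is the careful, correct handling of the non-differentiability at $\bar\alpha$: verifying that it is the sole interior kink and organizing the subgradient optimality relation $0\in\partial F_\tau(\alpha_*)$ so that the three candidate types (the kink $\bar\alpha$ and the two branch-wise stationarity equations) are genuinely exhaustive. It is worth noting that on the right branch both $\mathcal{J}$ and $\rho$ are increasing, so $F_\tau'>0$ there and \eqref{poly-eqn-gd_2} is in fact vacuous; the interior candidates all come from \eqref{poly-eqn-gd_1}, but listing both equations keeps the statement symmetric. Everything beyond the kink analysis is routine differentiation and algebraic rearrangement.
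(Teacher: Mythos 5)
Your proof is correct and follows essentially the same route as the paper's: rule out the endpoints of $(0,2/L)$ because $F_\tau$ is not finite there, note that $F_\tau$ is differentiable on the interior except at the single kink $\bar{\alpha}=2/(\mu+L)$ where $|1-\al\mu|=|1-\al L|$, and obtain \eqref{poly-eqn-gd_1} and \eqref{poly-eqn-gd_2} from branchwise first-order conditions. Your additions---the explicit coercivity argument, the check that the kinks of $|1-\al\mu|$ at $1/\mu$ and of $|1-\al L|$ at $1/L$ are inactive, and the observation that \eqref{poly-eqn-gd_2} is in fact vacuous because both of its terms are strictly positive whenever $|1-\al\mu|<|1-\al L|$ (so $\al L>1$)---are all correct refinements consistent with the proposition as stated.
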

\begin{proof}
To solve \eqref{opt-pbm-gd} explicitly, note that 
\begin{equation*}
\frac{1}{1-\rho^2} = \begin{cases}
	\frac{1}{1 - (1-\al\mu)^2} & \mbox{if} 
    \quad |1-\al \mu| > |1-\al L|,    \\
 \frac{1}{1 - (1-\al L)^2}   &  \mbox{if} \quad |1-\al \mu| \leq |1-\al L|.  
\end{cases}
\end{equation*}
The optimal $\al^*$ cannot be attained on the boundary points of the interval $[0,2/L]$ as 
$F$ is not finite 
at these points. Therefore, it suffices to solve the optimization problem over the open interval $(0,2/L)$ where $F$ is differentiable with respect to $\al$ except when $|1-\al \mu| = |1-\al L|$, i.e. when $\al = 2/(\mu+L)$. For $\al^* \neq 2/(\mu+L)$, we can write-down the first-order conditions of optimality
$\tfrac{\partial F}{\partial \al}=0$ 
which leads to \eqref{poly-eqn-gd_1} and \eqref{poly-eqn-gd_2}.
\end{proof}

\section{Proof of Proposition \ref{prop-stability-set-ag}}
In the light of the formula \eqref{rho_nest} that characterizes $\rho(A_Q)$, the closure of the stability set $\calS$ admits the representation
$\calS = \calS_\mu \cap \calS_L$ where for $\lambda \in \{\mu, L\}$ we define 
\begin{equation}\label{def-S-intersection}
\mathcal{S_\lambda} = \left\{ (\al,\be) ~:~ \rho_\lambda(\al,
\be) \leq 1, \al\geq 0, \be\geq 0 \right\} \subset \R^2.
\end{equation}
We first write $\calS_\lambda$ as a union of two disjoint sets depending on the signature of $\Delta_\lambda$:
$\calS_\lambda = \calS_{\lambda,1} \cup  \calS_{\lambda,2}$
where 
\begin{eqnarray} \calS_{\lambda,1} = \calS_\lambda \cap \{(\al,\be)~:~  \Delta_\lambda \leq 0\}, \quad
\calS_{\lambda,2} = \calS_\lambda \cap \{(\al,\be)~:~  \Delta_\lambda > 0\}.
\end{eqnarray}
It follows from the definition of $\Delta_\lambda$ in \eqref{def-delta-lambda} that $\Delta_\lambda \leq 0$ if and only if
$0 \leq 1-\al\la \leq \frac{4\be}{(1+\be)^2}$;
and when this condition holds, $\rho_\la = \sqrt{\be(1-\al\la)} \leq 1$ if and only if
$ 0 \leq 1-\al\la \leq \frac{1}{\beta}. $
Therefore,
{
\begin{equation} \label{S_l_1}
\calS_{\la,1} = \{ (\al,\be):~0 \leq 1-\al\la \leq \min\{\frac{1}{\beta}, \frac{4\beta}{(1+\beta)^2} \}\}.
\end{equation}}%
We next focus on $\calS_{\la,2}$. Note that $\Delta_\lambda \geq 0$ if and only if
{
\begin{equation}\label{eqn-delta-nonneg}
1-\al\la \leq 0 \quad \mbox{or}\quad 1-\al\lambda \geq \frac{4\be}{(1+\be)^2}.
\end{equation}}%
If \eqref{eqn-delta-nonneg} is satisfied, then $\rho_\la \leq 1$ if and only if
$\frac{1}{2}(1+\be)(1-\al\la)\mbox{sign}(1-\al\lambda) + \frac{1}{2}\sqrt{\Delta_\la} \leq 1$.
There are two cases: 

1) $\Dla > 0$  and $1-\al \la < 0$: In this case, $\rho_\lambda \leq 1$ if and only if
    $\sqrt{\Delta_\la} \leq 2 - (1+\be) \cla$,
where $\cla=-(1-\al\la) > 0$. By squaring both sides, this is if and only if,  
	$ \Delta_\la \leq (2 - (1+\be) \cla)^2 \quad \mbox{and} \quad 2 - (1+\be) \cla \geq 0 $
The first inequality holds if 
 $ \cla = -(1-\al\la) \leq \tfrac{1}{2\be+1}$
whereas the second inequality holds if 
  $ \cla = -(1-\al\la) \leq \tfrac{2}{\be+1}.$
The first inequality is more binding, if it holds the second inequality holds too. Therefore,
\begin{equation} \label{S_l_2_1}
\{(\al,\be):~ 0 \leq -(1-\al\la) \leq \frac{1}{2\be + 1} \} \subset \calS_{\la,2}.
\end{equation}

2) $\Dla >0$ and $1-\al \la > 0$: In this case, 
 $\rho_\lambda \leq 1$ if and only if
    $ \sqrt{\Delta_\la} \leq 2 - (1+\be) \dla$ 
where $\dla:=-\cla = (1-\al\la) > 0$. After squaring both sides, this is if and only if 
	\begin{equation}\label{ineq-s-lam-2} \Delta_\la \leq (2 - (1+\be) \dla)^2 \quad \mbox{and} \quad 2 - (1+\be) \dla \geq 0 \end{equation}
where the first inequality simplifies to $1 \geq \dla$. \eqref{ineq-s-lam-2} along with \eqref{eqn-delta-nonneg} means
$\tfrac{4\beta}{(1+\beta)^2}\leq 1-\al\la \leq \min\{1, \tfrac{2}{1+\beta}\}$ 
which implies $\beta \leq 1$; therefore,
{
\begin{equation} \label{S_l_2_2}
\{(\al,\be):~ \frac{4\beta}{(1+\beta)^2} \leq 1-\al\la \leq \frac{2}{1+\beta} \} \subset \calS_{\la,2}.
\end{equation}}%
Merging \eqref{S_l_1}, \eqref{S_l_2_1}, and \eqref{S_l_2_2} yields
{
\begin{equation} \label{S_l}
S_\lambda = \left\{(\al,\be):~ 1-\al \la \in \left[-\frac{1}{1+2\beta}, \min\{\frac{1}{\beta}, \frac{2}{1+\beta} \}\right]\right\}.
\end{equation}}%
To complete the proof, due to the representation \eqref{def-S-intersection}, it suffices to compute the intersection $\calS_\mu \cap \calS_L$. There are several cases to consider depending on the value of $\alpha$:

1) First, consider $\alpha \in [0, \frac{1}{L}]$. In this case $1 - \al \mu \geq 1- \al L \geq 0$, and hence \eqref{S_l} implies
$1 - \al \mu \leq \frac{2}{1+\be}$ if $\beta \leq 1$ whereas $1 - \al \mu \leq \frac{1}{\be}$ if $\beta \geq 1$.
Nevertheless, if $\beta \leq 1$ then $\frac{2}{1+\beta} \geq 1$, so the first case always holds; 
hence, 
$(1 - \al \mu)\beta \leq 1$.

2) Now, assume $\alpha \in [\frac{1}{L}, \min\{\frac{2}{L}, \frac{1}{\mu}\}]$. Then $1 - \al \mu \geq 0 \geq 1- \al L$, and thus \eqref{S_l} yields
$
1- \al L \geq -\frac{1}{1+2\beta}, \quad 1- \al \mu \leq \min\{\frac{1}{\beta}, \frac{2}{1+\beta}\}
$
where the second inequality again simplifies to $(1 - \al \mu)\beta \leq 1$.

3) The last possible case happens when $\mu \geq \frac{L}{2}$ , and so $\alpha \in [\frac{1}{\mu}, \frac{2}{L}]$ is possible. In this case $1 - \al L \leq 1- \al \mu \leq 0$, and so using \eqref{S_l}, we just need to check 
$
1- \al L \geq -\frac{1}{1+2\beta}
$
Considering all these cases along with the fact that \eqref{S_l} shows $\alpha$ cannot be greater than $\frac{2}{L}$ completes the proof.
\section{Proof of Proposition \ref{prop-h2-nesterov}\label{sec-appendix-h2-nesterov}}
Similar to the analysis for 
GD, we can assume without loss of generality that $Q$ is diagonal. The proof is also similar. Consider $U \Lambda U^\top$ be the eigenvalue decomposition of $Q$. Then $A_Q$ in \eqref{A_Q_nest} can be written as
{
\begin{align}
&A_Q = \tilde{U} A_\Lambda \tilde{U}^\top,\quad \hbox{where} \label{A_la_nest}\\
&\tilde{U} = \begin{bmatrix} 
	U & 0_d\\
    0_d & U
\end{bmatrix},
A_\Lambda = \begin{bmatrix} 
	(1+\be)(\id-\al \Lambda) & -\be(\id-\al \Lambda)\\
    \id & 0_d
\end{bmatrix}. \label{UTilde}
\end{align}}%
Replacing $A_Q$ from \eqref{A_la_nest} in Lyapunov equation \eqref{eqn-lyapunov} implies
\begin{equation}
\tilde{U} A_\Lambda \tilde{U}^\top X \tilde{U} A_\Lambda^\top \tilde{U}^\top  - X + B B^\top = 0.
\end{equation}
Multiplying by $\tilde{U}$ and $\tilde{U}^\top$ from right and left respectively yields
\begin{equation} \label{A_Lambda_lyapunov}
A_\Lambda \tilde{U}^\top X \tilde{U} A_\Lambda^\top - \tilde{U}^\top X \tilde{U} + B B^\top = 0
\end{equation}
where we used the fact that $B$ in \eqref{nest_ABCD} has the property that $\tilde{U}^\top B B^\top \tilde{U} = B B^\top$. 

Equation \eqref{A_Lambda_lyapunov} shows that $\tilde{U}^\top X \tilde{U}$ satisfies the Lyapunov equation when $A_Q$ is replaced by $A_\Lambda$. Next, we show that after after substituting $Q$ by $\Lambda$, the robustness $\mathcal{J}(\al, \be)$ would stay the same;
i.e., $H_2^2(A_\Lambda, B, \sqrt{\frac{1}{2}}\Lambda^{1/2}T) = H_2^2(A_Q, B, RT)$, where $R=\sqrt{\frac{1}{2}} \Lambda^{1/2}U^\top$. To show this, note that from \eqref{formula-h2_1}, we have 
\begin{equation*}
\begin{split}
H_2^2(A_\Lambda, B, \sqrt{\frac{1}{2}}\Lambda^{1/2}T) &= \frac{1}{2}\Tr((\Lambda^{1/2}T)\tilde{U}^\top X \tilde{U} (\Lambda^{1/2}T)^\top)\\
&=\frac{1}{2} \Tr((\Lambda^{1/2}T\tilde{U}^\top) X (\Lambda^{1/2}T\tilde{U}^\top)^\top) = \Tr((RT)X (RT)^\top),
\end{split}
\end{equation*}
where the last equality is true as $T \tilde{U}^\top = U^\top T$ for $T$ and $U$ given in 
\eqref{quad_func_err_3} and \eqref{UTilde}. 
This result 
completes the proof of our claim that we can assume $Q$ is diagonal. For simplicity we will continue our analysis with $A_Q$, assuming its a diagonal matrix.

Let $P_\pi$ be the permutation matrix associated with the permutation $\pi$ over the set $\{1,2,...,2d\}$ that satisfies $\pi(i)=2i-1$ for $1 \leq i \leq d$ and $\pi(i)=2(i-d)$ for $d+1 \leq i \leq 2d$.
It is well-known that permutation matrices satisfy
$P_\pi^{-1} = P_\pi^\top = P_{\pi^{-1}}$;
therefore, multiplying Lyapunov equation \eqref{eqn-lyapunov} by $P_\pi$ and $P_\pi^\top$ from left and right, respectively, 
leads to
\begin{equation} \label{lyapunov_per}
(P_\pi A_Q P_\pi^\top) Y (P_\pi A_Q^\top P_\pi^\top) - Y + P_\pi BB^\top P_\pi^\top = 0,
\end{equation}
where $Y=P_\pi X P_\pi^\top$. 
It follows from \eqref{A_Q_nest} that 
{
\begin{equation*}
P_\pi A_Q P_\pi^\top=
\diag([T_i]_{i=1}^d),\quad\hbox{and}\quad 
{
T_i =  \begin{bmatrix}
(1+\be)(1-\al \lambda_i) & -\be(1-\al \lambda_i) \\
1      & 0 
\end{bmatrix}},\ i=1,\ldots,d
\end{equation*}}%
and
$0 < \mu=\lambda_1 \leq \lambda_2 \leq ... \leq \lambda_d=L$ are the eigenvalues of $Q$. Since
{
$B B^T=  
\begin{bmatrix} 
\alpha^2 \id & 0_d \\
0_d     & 0_d
\end{bmatrix}$,}%
$P_\pi BB^\top P_\pi^\top$ is a $2d$ by $2d$ diagonal matrix with $\alpha^2$ on entries $(1,1),(3,3),...,$ $(2d-1,2d-1)$ and zero elsewhere. Hence, $Y$ that solves \eqref{lyapunov_per} is a block diagonal matrix in the form:
{
$Y=\diag([Y_i]_{i=1}^d)
$},
where
{
$Y_i =  \begin{bmatrix} 
y_i^u & y_i^o  \\
y_i^o & y_i^d  
\end{bmatrix}$
}%
satisfies the equality
{
\begin{align*}
\begin{bmatrix} 
(1+\be)(1-\al \lambda_i) &  -\be(1-\al \lambda_i)\\
1      & 0 
\end{bmatrix} Y_i 
      			\begin{bmatrix} 
(1+\be)(1-\al \lambda_i) & 1 \\
-\be(1-\al \lambda_i)    & 0 
\end{bmatrix}  - Y_i 
      			+ \begin{bmatrix}
	\alpha^2   & 0 \\
     0 &   0
\end{bmatrix} = 0
\end{align*}}%
for all $i=1,\dots,d$. This is equivalent to the linear system:
{
\begin{equation*}
\begin{bmatrix} (1+\be)^2(1-\al\lambda_i)^2 - 1 & -2\be(1+\be)(1-\al\lambda_i)^2 & \be^2(1-\al\lambda_i)^2 \\
(1+\be)(1-\al\lambda_i) & -1-\be(1-\al\lambda_i) & 0 \\
1 & 0 & -1 
\end{bmatrix} 
\begin{bmatrix} y_i^u  \\ y_i^o \\ y_i^d
				\end{bmatrix} 
= 
\begin{bmatrix}
	-\al^2 \\ 0 \\ 0
\end{bmatrix}.
\end{equation*}}%
Solving this system of equations, we obtain:
{
\begin{equation} \label{Y_solutions}
\begin{split}
y_i^u = y_i^d &=\al\frac{1+\be(1-\al \lambda_i)}{\lambda_i (1-\be(1-\al\lambda_i))(2+2\be-\al\lambda_i(1+2\be))},\\
y_i^o &= \frac{\alpha^2(1+\be)(1-\al \lambda_i)}{\al \lambda_i (1-\be(1-\al\lambda_i))(2+2\be-\al\lambda_i(1+2\be))}.
\end{split}
\end{equation}}%
The $\mathcal{J}(\al, \be)$ can be computed using
{
\begin{equation}
\begin{split}
\Tr((\sqrt{\frac{1}{2}}Q^{1/2}T) X (\sqrt{\frac{1}{2}}Q^{1/2}T)^\top) 
= \frac{1}{2} \Tr(P_\pi T^\top Q T P_\pi^\top Y).
\end{split}
\end{equation}}%
The matrix $P_\pi T^\top Q T P_\pi^\top$ is block diagonal with 
$2\times 2$ matrices 
$
\begin{bmatrix} 
\lambda_i & 0 \\
0 & 0
\end{bmatrix}
$
on its diagonal. Therefore, using \eqref{Y_solutions}, the robustness measure $\mathcal{J}(\al, \be)$ is equal to
{
\begin{equation}
\frac{1}{2} \sum_{i=1}^d \al \frac{1+\be(1-\al \lambda_i)}{(1-\be(1-\al\lambda_i))(2+2\be-\al\lambda_i(1+2\be))}.
\end{equation}}%
\section{Convexity of \texorpdfstring{$\uab(\lambda)$}{Lg}}
We next show that 
$\uab(\lambda)$ appearing in the definition of the $\mathcal{J}(\al,\be)$ for the AG algorithm is convex with respect to $\lambda$.
\begin{lemma}\label{lem-H2-cvx} Let $(\alpha,\beta) \in \calS$ where $\calS$ is the stability region of the dynamical system representation of AG given by \eqref{stability_def}. The function $\uab(\lambda)$ defined by \eqref{eq-s-lam} is convex on the interval $[\mu,L]$. 
\end{lemma}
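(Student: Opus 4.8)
The plan is to reduce the claim to the convexity of two elementary reciprocal functions, by combining an affine change of variables with a partial fraction decomposition. If $\alpha=0$ then $\uab\equiv 0$ by \eqref{eq-s-lam}, which is trivially convex, so I would assume $\alpha>0$ throughout. First I would substitute $t\triangleq 1-\alpha\lambda$. Since $\alpha>0$, the map $\lambda\mapsto t$ is an affine bijection and convexity is preserved under affine reparametrization, so it suffices to prove convexity of $\uab$ viewed as a function of $t$ on the image interval $[\,1-\alpha L,\ 1-\alpha\mu\,]$. The key simplification is that the second denominator factor collapses: $2+2\beta-\alpha\lambda(1+2\beta)=1+(1+2\beta)t$. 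Writing $g(t)$ for the resulting expression,
\[
g(t)=\frac{\alpha}{2}\cdot\frac{1+\beta t}{(1-\beta t)\big(1+(1+2\beta)t\big)}.
\]

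Next I would compute the partial fraction decomposition of the rational factor. A direct coefficient match gives
\[
\frac{1+\beta t}{(1-\beta t)\big(1+(1+2\beta)t\big)}=\frac{2\beta}{1+3\beta}\cdot\frac{1}{1-\beta t}+\frac{1+\beta}{1+3\beta}\cdot\frac{1}{1+(1+2\beta)t},
\]
so that $g(t)$ is a combination of $\tfrac{1}{1-\beta t}$ and $\tfrac{1}{1+(1+2\beta)t}$ with weights $\tfrac{\alpha\beta}{1+3\beta}$ and $\tfrac{\alpha(1+\beta)}{2(1+3\beta)}$, both manifestly nonnegative because $\alpha\geq 0$ and $\beta\geq 0$. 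Each reciprocal $\tfrac{1}{1-\beta t}$ and $\tfrac{1}{1+(1+2\beta)t}$ has second derivative $\tfrac{2\beta^2}{(1-\beta t)^3}$ and $\tfrac{2(1+2\beta)^2}{(1+(1+2\beta)t)^3}$ respectively, hence is convex precisely where its own denominator is positive. Since a nonnegative linear combination of convex functions is convex, the proof reduces to checking these two sign conditions.

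The main obstacle — and the only place where the hypothesis $(\alpha,\beta)\in\calS$ is actually used — is verifying that $1-\beta t>0$ and $1+(1+2\beta)t>0$ hold throughout the range of $t$, equivalently for all $\lambda\in[\mu,L]$. These are exactly the constraints $\beta(1-\alpha\lambda)\leq 1$ (which bounds $t$ above by $1/\beta$) and $1-\alpha\lambda\geq -\tfrac{1}{1+2\beta}$ (which bounds $t$ below) appearing in \eqref{S_l} of Proposition~\ref{prop-stability-set-ag}. Because $t(\lambda)=1-\alpha\lambda$ is monotone in $\lambda$, it suffices that these bounds hold at the extreme eigenvalues $\lambda=\mu$ and $\lambda=L$, which is guaranteed by $(\alpha,\beta)\in\calS=\calS_\mu\cap\calS_L$; thus both denominators stay strictly positive on $[\mu,L]$. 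As a consistency check, the same positivity conditions make the numerator $1+\beta t$ and hence $g$ itself nonnegative, as they must be since $\uab(\lambda)$ is an $H_2^2$ contribution.

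Combining the two convex summands with their nonnegative weights then yields convexity of $g$ on the relevant $t$-interval, and undoing the affine substitution gives convexity of $\uab$ on $[\mu,L]$, completing the argument.
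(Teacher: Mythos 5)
Your proof is correct, and it follows the same overall strategy as the paper's (affine change of variables, then denominator positivity from the stability region, then elementary convexity), but it organizes the final step via a genuinely different device. The paper substitutes $\tla=\beta(1-\alpha\lambda)$, rewrites $\uab$ as $q(\tla)=\frac{\alpha}{2}\frac{1+\tla}{(1-\tla)(1+\gamma\tla)}$ with $\gamma=2+\frac{1}{\beta}$, uses stability to get $-\frac{1}{\gamma}\leq\tla\leq 1$, and then verifies convexity by differentiating the rational function twice, displaying $q''$ as a sum of two manifestly nonnegative terms. Your partial-fraction decomposition is the structural version of that same computation: differentiating $\frac{2\beta}{1+3\beta}\cdot\frac{1}{1-\beta t}+\frac{1+\beta}{1+3\beta}\cdot\frac{1}{1+(1+2\beta)t}$ twice reproduces the paper's displayed second derivative exactly (note $\gamma\tla=(1+2\beta)t$), so the two arguments are computationally equivalent; what yours buys is that convexity becomes immediate---a nonnegative combination of reciprocals of affine functions that are positive on the relevant interval---without differentiating the full rational function, and, because you substitute $t=1-\alpha\lambda$ rather than $\beta(1-\alpha\lambda)$, your argument also remains valid at $\beta=0$ (the GD case), where the paper's $\gamma=2+\frac{1}{\beta}$ is undefined. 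Two minor remarks, neither a gap relative to the paper's standard of rigor: first, your $\alpha=0$ case is actually vacuous, since $\alpha=0$ forces $\rho(A_Q)=1$ and such pairs are excluded from $\calS$ by \eqref{stability_def}; second, Proposition~\ref{prop-stability-set-ag} characterizes the \emph{closure} of $\calS$, so the interval bounds it supplies are non-strict, and the strict positivity of the denominators you assert for $(\alpha,\beta)\in\calS$ uses the additional (easy) observation that equality in either bound forces $\rho(A_Q)=1$---an implicit step the paper's own proof makes as well when it writes $1\geq\tla\geq-\frac{1}{\gamma}$ and evaluates $q''$ on the closed interval.
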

\begin{proof}  
The function $\uab(\lambda)$ can be written in terms of $\tla:=\be(1-\al \lambda)$ as follows
\begin{equation}\label{u_tilde}
q_\lambda(\tla) = \frac{\al}{2}\frac{1+\tla}{(1-\tla)(1+\tla\gamma)}
\end{equation}
where $\gamma:=2+\frac{1}{\beta}$. It follows from \eqref{stability_def} that for $(\alpha,\beta) \in \calS$,
$
1 \geq \tla \geq \frac{-1}{\gamma}    
$,
and thus both terms in denominator of \eqref{u_tilde} are positive. Note that $\tla$ is linear, and since the composition of a convex functions with a linear function is convex, it suffices to show $q_\lambda(\tla)$ is a convex function of $\tla$ over domain $[\frac{-1}{\gamma},1]$. To show this, we simply compute the second derivative of \eqref{u_tilde} with respect to $\tla$. After doing some algebra, 
{
\begin{equation}
\frac{d}{d \tla^2}q_\lambda(\tla) = \frac{\al}{2}\left(\frac{2(\gamma^3-\gamma^2)}{(\gamma+1)(\gamma \tla+1)^3}+\frac{4}{(\gamma+1)(1-\tla)^3}\right)  
\end{equation}}%
which is non-negative as $\gamma \geq 2$ and $\tla \in [\frac{-1}{\gamma},1]$. This completes the proof.
\end{proof}
\section{Defining rate and robustness based on iterates}\label{iterate_case}
In this supplementary file, we first recall how an alternative robustness measure can be defined based on the distance of the iterates to the optimal solution instead of the robustness measure $\mathcal{J}$ we introduced in the main text based on the asymptotic expected suboptimality in function values. Here, we focus on the iterate sequence $\{x_k\}_k$ 
to characterize the notions of rate and robustness. First we consider the case that $f$ is a quadratic function in the form of $f(x) = \tfrac{1}{2} x^\top Q x - p^\top x + r$. Using \eqref{iterates_rate} along with the relation $x_k = T \xi_k$ with $T$ defined in Section \ref{rate_robustness_quad}, for both GD and AG the sequence $\{\E[x_k]\}_k$ goes to zero with rate $\rho(A_Q)$.

However, due to the noise injected at each step, the limit of the sequence $\{x_k\}$ will oscillate around the optimal solution with a non-zero variance. 
Thus, a natural metric to measure robustness is then to study the \emph{asymptotic normalized variance} by considering the limit $\mathcal{J}' = \lim_{k\to\infty} \frac{1}{\sigma^2}\E[\norm{x_k-x^*}^2]$. 
Similar line of argument as Lemma \ref{H_2_quad_equi} shows that this limit exists and is in fact equal to $H_2^2(A_Q,B,T).$ This quantity can be viewed as the \emph{robustness to noise in terms of iterates} because it is 
equal to the ratio of the power of the iterates to the power of the input noise, measuring how much a system amplifies input noise. In particular, the smaller this measure is, the more robust the system is under additive random noise.

The robustness $\mathcal{J}'$ can be evaluated precisely for GD and AG method same as what we did in Section \ref{quad_case} for $\mathcal{J}$. For GD method with constant stepsize $\al \in (0,2/L)$, the robustness to noise in terms of iterates is denoted as $\mathcal{J}'(\al)$ to show the dependence to $\al$. The following proposition, which can be proved similar to Proposition \ref{quad_GD_thm}, shows the explicit characterization of $\mathcal{J}'(\al)$.
\begin{proposition} \label{quad_GD_thm_2}
Let $f$ be a quadratic function of the form $f(x) = \tfrac{1}{2} x^\top Q x - p^\top x + r$. Consider the GD iterations given by \eqref{gradient_update} with constant stepsize $\alpha \in (0,2/L)$ . Then the robustness of the GD method in terms of iterates is given by
\begin{align}\label{GD_J_iterates}
\mathcal{J}'(\alpha) =  \alpha^2\sum_{i=1}^d\frac{1}{1-(1-\alpha \lambda_i)^2} = \alpha \sum_{i=1}^d\frac{1}{\lambda_i (2-\alpha \lambda_i)} ,
\end{align}
where $0<\mu = \lambda_1 \leq \lambda_2 \leq ... \lambda_d=L$ are the eigenvalues of $Q$.
\end{proposition}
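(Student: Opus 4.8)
The plan is to exploit the identity $\mathcal{J}'(\alpha) = H_2^2(A_Q, B, T)$ with $T = I_d$, established in the remark following Lemma \ref{H_2_quad_equi}, and then follow the same diagonalization strategy used in the proof of Proposition \ref{quad_GD_thm}. Since $\alpha \in (0, 2/L)$ guarantees $\rho(A_Q) < 1$ for $A_Q = I_d - \alpha Q$, the discrete Lyapunov equation \eqref{lyapunov_AQ} admits a unique solution $X$, and by \eqref{formula-h2_1} we have $\mathcal{J}'(\alpha) = \Tr(T X T^\top) = \Tr(X)$.

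First I would reduce to the case of diagonal $Q$. Writing $Q = U\Lambda U^\top$ for a unitary $U$ and $\Lambda = \diag(\lambda_1,\dots,\lambda_d)$, and conjugating \eqref{lyapunov_AQ} by $U$ exactly as in \eqref{UTAQU}, one sees that $U^\top X U$ solves the Lyapunov equation with $A_Q$ replaced by $A_\Lambda = I_d - \alpha\Lambda$ and $B = -\alpha I_d$ unchanged (using $U^\top B B^\top U = BB^\top$). Since $T = I_d$, the invariance $\Tr(X) = \Tr(U^\top X U)$ is immediate from the cyclic property of the trace, so $\mathcal{J}'(\alpha)$ is unchanged if we replace $Q$ by $\Lambda$. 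This step is even simpler than in Proposition \ref{quad_GD_thm}, because here the output matrix carries no $\tfrac12 Q$ weighting.

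Next I would solve the diagonalized Lyapunov equation explicitly. Because both $A_\Lambda$ and $B$ are diagonal, the solution is the diagonal matrix
\[
X_\Lambda = \diag\!\left(\frac{\alpha^2}{1-(1-\alpha\lambda_1)^2},\dots,\frac{\alpha^2}{1-(1-\alpha\lambda_d)^2}\right),
\]
which is precisely the $X_\Lambda$ already computed in the proof of Proposition \ref{quad_GD_thm}; each diagonal entry solves the scalar equation $(1-\alpha\lambda_i)^2 x_i - x_i + \alpha^2 = 0$. Taking the trace then gives $\mathcal{J}'(\alpha) = \sum_{i=1}^d \frac{\alpha^2}{1-(1-\alpha\lambda_i)^2}$, the first claimed expression. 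Finally, the factorization $1-(1-\alpha\lambda_i)^2 = \alpha\lambda_i(2-\alpha\lambda_i)$ converts this into $\alpha\sum_{i=1}^d \frac{1}{\lambda_i(2-\alpha\lambda_i)}$, yielding the second expression.

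I do not anticipate any genuine obstacle: the argument is structurally identical to Proposition \ref{quad_GD_thm}, the only difference being the choice of output matrix $T = I_d$ in place of $\tilde{R} = RT$, which removes the eigenvalue weighting $\lambda_i/2$ and explains why the formula \eqref{GD_J_iterates} differs from \eqref{GD_quad_main:b} exactly by this factor. The only point worth stating carefully is that $\alpha \in (0,2/L)$ is what keeps $|1-\alpha\lambda_i| < 1$ for every $i$, so that each denominator $1-(1-\alpha\lambda_i)^2$ is strictly positive and the solution $X_\Lambda$ is well defined.
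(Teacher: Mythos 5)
Your proposal is correct and follows exactly the route the paper intends: the paper states that Proposition \ref{quad_GD_thm_2} ``can be proved similar to Proposition \ref{quad_GD_thm},'' and your argument is precisely that adaptation --- using $\mathcal{J}'(\alpha) = H_2^2(A_Q,B,T)$ with $T=I_d$, diagonalizing $Q$, solving the same scalar Lyapunov equations to get $X_\Lambda$, and taking the unweighted trace, with the factorization $1-(1-\alpha\lambda_i)^2 = \alpha\lambda_i(2-\alpha\lambda_i)$ giving the second form. Your closing remarks --- that dropping the $\tfrac{1}{2}Q$ output weighting is the only difference from \eqref{GD_quad_main:b}, and that $\alpha\in(0,2/L)$ ensures $|1-\alpha\lambda_i|<1$ so the solution is well defined --- are both accurate.
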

For AG, with constant stepsize $\al$ and momentum parameter $\beta$, we denote the robustness to noise in terms of iterates as $\mathcal{J}'(\alpha, \be)$. The following theorem, which can be proved similar to Proposition \ref{prop-h2-nesterov}, provides an explicit formula for $\mathclap{J}'(\al,\be)$ in terms of the eigenvalues of $Q$.
\begin{proposition}
Let $f$ be a quadratic function of the form $f(x) = \tfrac{1}{2} x^\top Q x - p^\top x + r$. Consider the AG iterations given by \eqref{nest:main} with parameters $(\alpha,\beta) \in \mathcal{S}$ . Then the robustness of the AG method in terms of iterates is given by
\begin{align}
	\mathcal{J}'(\alpha,\beta) = \sum_{i=1}^d u'_{\alpha,\beta}(\lambda_i)
\end{align}
where $\mu = \lambda_1 \leq \lambda_2 \leq \dots \leq \lambda_d = L$ are the eigenvalues of $Q$ and
	{
    \beq u'_{\alpha,\beta} (\lambda) \triangleq\al \frac{1+\be(1-\al \lambda)}{\lambda (1-\be(1-\al\lambda))(2+2\be-\al\lambda(1+2\be))}.
    \eeq
    }
\end{proposition}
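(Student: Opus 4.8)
The starting point is the remark already established in the main text (and re-derived at the top of this appendix): for a strongly convex quadratic the iterate robustness equals the $H_2$ norm of the system with output matrix $T$, namely $\mathcal{J}'(\al,\be) = H_2^2(A_Q,B,T) = \Tr(T X T^\top)$, where $X$ solves the discrete Lyapunov equation \eqref{eqn-lyapunov} for $(A_Q,B)$ with $A_Q$ given by \eqref{A_Q_nest}. Since $(\al,\be)\in\calS$ guarantees $\rho(A_Q)<1$, this $X$ exists and is unique, so the whole argument of Proposition \ref{prop-h2-nesterov} applies verbatim up to the very last contraction step. The plan is therefore to follow that proof and change only the output weighting.

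First I would reduce to a diagonal Hessian exactly as in Appendix \ref{sec-appendix-h2-nesterov}. Writing $Q=U\Lambda U^\top$ and $A_Q=\tilde{U}A_\Lambda\tilde{U}^\top$ as in \eqref{A_la_nest}--\eqref{UTilde}, the solution transforms as $\tilde{U}^\top X\tilde{U}$, which solves the Lyapunov equation for $(A_\Lambda,B)$. Here the invariance check is in fact cleaner than for $\mathcal{J}$: using $T\tilde{U}^\top = U^\top T$ together with orthogonality of $U$, one gets $\Tr\!\big(T(\tilde{U}^\top X\tilde{U})T^\top\big)=\Tr(U^\top TXT^\top U)=\Tr(TXT^\top)$, so $H_2^2(A_\Lambda,B,T)=H_2^2(A_Q,B,T)$ with no $\Lambda^{1/2}$ factors to track. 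Thus we may assume $Q=\Lambda$ and proceed with the permutation matrix $P_\pi$ from Appendix \ref{sec-appendix-h2-nesterov}, which block-diagonalizes the Lyapunov equation into $d$ decoupled $2\times 2$ systems. The crucial point is that these per-mode systems depend only on $A_\Lambda$ and $B$, not on the output matrix, so their solutions $Y_i$ are precisely those recorded in \eqref{Y_solutions}; nothing in that step needs to be recomputed.

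The only genuine change is the final contraction. Writing $Y=P_\pi X P_\pi^\top$, I would compute $\mathcal{J}'=\Tr(TXT^\top)=\Tr\!\big(Y\, P_\pi T^\top T P_\pi^\top\big)$. Whereas in Proposition \ref{prop-h2-nesterov} the weighting $P_\pi T^\top Q T P_\pi^\top$ is block-diagonal with $2\times 2$ blocks $\left[\begin{smallmatrix}\lambda_i&0\\0&0\end{smallmatrix}\right]$ (and carries an extra factor $\tfrac12$ from the Cholesky factor of $\tfrac12 Q$), here $P_\pi T^\top T P_\pi^\top$ is block-diagonal with blocks $\left[\begin{smallmatrix}1&0\\0&0\end{smallmatrix}\right]$. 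Hence the trace simply reads off the sum of the upper-left entries $\sum_{i=1}^d y_i^u$. Substituting $y_i^u$ from \eqref{Y_solutions} gives exactly $y_i^u=u'_{\alpha,\beta}(\lambda_i)$, yielding $\mathcal{J}'(\al,\be)=\sum_{i=1}^d u'_{\alpha,\beta}(\lambda_i)$ and confirming en passant the relation $u'_{\alpha,\beta}(\lambda)=\tfrac{2}{\lambda}\,u_{\alpha,\beta}(\lambda)$. I do not anticipate a real obstacle: the entire difficulty (diagonalization, the permutation trick, solving the $2\times 2$ Lyapunov blocks) is inherited from Proposition \ref{prop-h2-nesterov}, and the ``hard part'' is merely recognizing that replacing the output matrix $RT$ by $T$ changes the mode weights from $\tfrac12\lambda_i$ to $1$, so no new computation is required beyond reading off \eqref{Y_solutions}.
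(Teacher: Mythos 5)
Your proposal is correct and takes essentially the same route the paper intends: the appendix explicitly states this proposition ``can be proved similar to Proposition~\ref{prop-h2-nesterov}'', and you carry that out faithfully, reusing the diagonalization $A_Q=\tilde{U}A_\Lambda\tilde{U}^\top$, the permutation $P_\pi$, and the per-mode Lyapunov solutions \eqref{Y_solutions} unchanged, with the only modification being the output weighting $P_\pi T^\top T P_\pi^\top$ with blocks $\left[\begin{smallmatrix}1&0\\0&0\end{smallmatrix}\right]$ in place of $\tfrac12 P_\pi T^\top Q T P_\pi^\top$, so that $\mathcal{J}'=\sum_{i=1}^d y_i^u=\sum_{i=1}^d u'_{\alpha,\beta}(\lambda_i)$. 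Your side remark $u'_{\alpha,\beta}(\lambda)=\tfrac{2}{\lambda}u_{\alpha,\beta}(\lambda)$ is also consistent with \eqref{eq-s-lam}, and your invariance check via $T\tilde{U}^\top=U^\top T$ is valid.
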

As discussed in Section \ref{quad_case}, the $\mathcal{J}'(\al,\be)$ admits a tractable upper bound in the form of
$	\mathcal{J}'(\al, \be) \leq d  \max(u'_{\alpha,\beta} (\mu), u'_{\alpha,\beta} (L))$
which only depends on $\mu$ and $L$.

We can also extend the definitions of rate and robustness in terms of iterates to the case that $f\in S_{\mu,L}(\mathbb{R}^d)$. Note that in general the sequence $\{x_k\}_k$ might not converge to the optimal solution in expectation (see \cite{bachGD}). Using the family of Lyapunov functions $V_{P,c}(\xi)$, it can be shown that, similar to \eqref{main_contraction}, the following inequality holds for both GD and AG with properly chose parameters
\begin{equation}\label{main_contraction_iterates}
\E[\|x_k-x^*\|^2] \leq \rho^{2k} \psi'_0 + \sigma^2 R' \quad k \geq 1
\end{equation}
where $0<\rho<1$ is the same $\rho$ as \eqref{main_contraction} and also $\psi'_0$ and $R'$ are non-negative numbers and depend on algorithm parameters and initial point $x_0$. For instance, Proposition \ref{general_gd_cont} implies that  \eqref{main_contraction_iterates} holds for GD, i.e., for all $k \geq 0$, 
\begin{align}
&\E[\norm{x_{k}-x^*}^2] \leq \rho(\al)^{2k} \norm{x_0-x^*}^2 + \sigma^2 R'(\al),\quad \hbox{where} \label{eq:GD_result_iterates}\\
&R'(\al) \triangleq \frac{\alpha^2 d}{1-\rho(\al)^2}.    \label{GD_R_iterates}
\end{align}
Similarly, we can derive \eqref{main_contraction_iterates} for AG by using Proposition \ref{AG_mainresult}.

Finally, we show that the $R'(\al)$ is a tight bound for $\mathcal{J}'$.
For quadratic \Sml, $\mathcal{J}'$ can be written in closed form as in \eqref{GD_J_iterates}, 
Note that 
if $\lambda_i = \mu = L$ for $i=1,\ldots, d$, then this quantity is equal to $R'(\al)$.
\end{appendix}

\end{document}